\documentclass[11pt,reqno]{amsart}
\usepackage{amssymb, hyperref}
\setlength{\textwidth}{15.5cm}
\setlength{\oddsidemargin}{0.7cm}
\setlength{\evensidemargin}{0.7cm}

\def\norm#1{\|#1\|}
\def\normo#1{\left\|#1\right\|}

\def\set#1{\left\{#1\right\}}
\def\bra#1{\langle#1\rangle}
\def\wt#1{\widetilde{#1}}
\def\wh#1{\widehat{#1}}

\newcommand{\R}{{\mathbb R}}
\newcommand{\T}{{\mathbb T}}
\newcommand{\Z}{{\mathbb Z}}
\newcommand{\N}{{\mathbb N}}
\newcommand{\ft}{{\mathcal{F}}}

\newcommand{\px}{\partial_x}
\newcommand{\pt}{\partial_t}

\newcommand{\la}{\lambda}
\newcommand{\La}{{\Lambda}}

\numberwithin{equation}{section}
\newtheorem{theorem}{Theorem}[section]
\newtheorem{proposition}[theorem]{Proposition}
\newtheorem{lemma}[theorem]{Lemma}

\newtheoremstyle{definition}{}{}%
     {}%         Body font
     {}%         Indent amount (empty = no indent, \parindent = para indent)
     {\bfseries}% Thm head font \itshape \normalfont \itshape
     {. }%        Punctuation after thm head
     {0em}%     Space after thm head (\newline = linebreak)
     {}%         Thm head spec

\theoremstyle{definition}
\newtheorem{definition}[theorem]{Definition}
\newtheorem{remark}[theorem]{Remark}

%Operator---------------------------------------
\def\MB{\mathbb}
\def\MC{\mathcal}
%-----------------------------------------------

%For temporary use------------------------------

\def\SK{S_{H}}

\def\SN{S^{N}_{H}}
\def\HS{H^{-\frac{1}{2}}_0}
%-----------------------------------------------

\linespread{1.2}

\begin{document}
\title[Global well-posedness and Nonsqueezing]
  {Global well-posedness and Nonsqueezing property for the higher-order KdV-type flow}

\author[S. Hong]{Sunghyun Hong}
\email{shhong7523@kaist.ac.kr}
\address{Department of Mathematical Sciences, Korea Advanced Institute of Science and Technology, 291 Daehak-ro, Yuseong-gu, Daejeon 305-701, Republic of Korea}

\author[C. Kwak]{Chulkwang Kwak}
\email{ckkwak@kaist.ac.kr}
\address{Department of Mathematical Sciences, Korea Advanced Institute of Science and Technology, 291 Daehak-ro, Yuseong-gu, Daejeon 305-701, Republic of Korea}

\begin{abstract}
In this paper, we prove that the periodic higher-order KdV-type equation 
\[\left\{\begin{array}{ll}
\pt u + (-1)^{j+1} \px^{2j+1}u + \frac12 \px(u^2)=0, \hspace{1em} &(t,x) \in \R \times \T, \\
u(0,x) = u_0(x), &u_0 \in H^s(\T),
\end{array}
\right.\]
is globally well-posed in $H^s$ for $s \ge -\frac{j}{2}$, $j \ge 3$. The proof is based on "I-method" introduced by Colliander et al. \cite{CKSTT1}.  We also prove the nonsqueezing property of the periodic higher-order KdV-type equation. The proof relies on Gromov's nonsqueezing theorem for the finite dimensional Hamiltonian system and an approximation argument for the solution flow. More precisely, after taking the frequency truncation to the solution flow, we apply the nonsqueezing theorem. By using the approximation argument, we extend this result to the infinite dimensional system. This argument was introduced by Kuksin \cite{Kuksin:1995ue} and made concretely by Bourgain \cite{Bourgain:1994tr} for the 1D cubic NLS flow, and Colliander et al. \cite{CKSTT3} for the KdV flow. One of our observations is that the higher-order KdV-type equation has the better modulation effect from the non-resonant interaction than that the  KdV equation has. Hence, unlike the work of Colliander et al. \cite{CKSTT3}, we can get the nonsqueezing property for the solution flow without the Miura transform.
\end{abstract}

\thanks{}
\thanks{} \subjclass[2000]{35Q53, 70H15} \keywords{higher-order KdV-type equation, global well-posedness, I-method, symplectic nonsqueezing property}

\maketitle

\tableofcontents
\section{Introduction}%\label{sec:intro}
We consider the higher-order KdV-type equation,
\begin{equation}\label{eq:kdv}
\left\{\begin{array}{ll}
\pt u + (-1)^{j+1} \px^{2j+1}u + \frac12 \px(u^2)=0, \hspace{1em} &(t,x) \in \R \times \T, \\
u(0,x) = u_0(x), &u_0 \in H^s(\T),
\end{array}
\right.
\end{equation}
for $j \in \MB{N}$ and $u$ is a real-valued function. Especially, \eqref{eq:kdv} is called KdV and Kawahara equation when $j=1,2$, respectively. These types of equations have conservation laws such as  
\begin{align}
M[u]&= \int_{\MB{T}} u dx,   &\text{(Mean)} \label{eq:conservation laws} \\
E[u] &= \int_{\MB{T}} u^2 dx,  &\nonumber\\
H[u] &= \int_{\MB{T}} \frac{1}{2} \left(\partial_x^j u\right) ^2 - \frac{1}{6} u^3 dx. &\text{(Hamiltonian)} \label{eq:Hamiltonian}
\end{align}
Furthermore, \eqref{eq:kdv} is the Hamiltonian equation with respect to \eqref{eq:Hamiltonian}. In other words, we can rewrite \eqref{eq:kdv} as follows:
\begin{equation*}
u_t = \partial_x \nabla_u H\left(u\left(t\right)\right) = \nabla_{\omega} H\left(u\left(t\right)\right)
\end{equation*}
where $\nabla_u$ is the $L^2$ gradient and $\nabla_{\omega} = \nabla_{\omega_{-\frac{1}{2}}}$ is the symplectic gradient (see \eqref{eq:symplectic form}). These three conservation laws play various roles (in particular, the global behavior) in the study on the partial differential equations. In this paper, we focus on the global well-posedness and the nonsqueezing property of \eqref{eq:kdv} for any $j \le 3$ and $j \le 2$, respectively. Thus, they are importantly used to prove our results as well.\\

\subsection{Global well-posedness}
The local and global well-posedness of \eqref{eq:kdv} were widely studied. For the local well-posedness result, Gorsky and Himonas \cite{Gorsky:2009eg} firstly proved this problem for $s\ge-\frac12$ and Hirayama \cite{Hirayama} improved for $s \ge -\frac{j}{2}$.
Both works are based on the standard Fourier restriction norm method. Hirayama improved bilinear estimate by using the factorization of the resonant function.

The results of the global well-posedness for \eqref{eq:kdv}, when $j=1,2$ were proved by Colliander et al. \cite{CKSTT1} and Kato \cite{Kato}, respectively, via "I-method". In this paper, we extend results of \cite{CKSTT1} and \cite{Kato} for $j\ge3$. The method also basically follows the argument in \cite{CKSTT1} for periodic KdV equation, while some estimates are slightly different. We encountered difficulties in the algebraic factorization of resonant functions. In order to overcome this issue, we use another argument (see Lemma \ref{lem:algebra} below) comparing with Hirayama's proof (Lemma 2.2 in \cite{Hirayama}). Remark that $s =-\frac{j}{2}$ is sharp in the sense that the bilinear estimate in $X^{s,\frac12}$ space fails for $s < -\frac{j}{2}$ (see Theorem 1.4. in \cite{Hirayama}).

The following theorem is one of the main results in this paper:
\begin{theorem}
Let $j \ge 3$ and $s \ge -\frac{j}{2}$. Then \eqref{eq:kdv} is globally well-posed in $H^s(\T)$
\end{theorem}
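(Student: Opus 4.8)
The plan is to run the I-method of Colliander--Keel--Staffilani--Takaoka--Tao \cite{CKSTT1}, adapted to the higher-order dispersion. First I would take the local theory for granted: by Hirayama's bilinear estimate, \eqref{eq:kdv} is locally well-posed in $H^s(\T)$ for $s\ge-\frac j2$, with a lifespan bounded below by a negative power of $\norm{u_0}_{H^s}$, and (after subtracting the conserved mean $M[u]$ in \eqref{eq:conservation laws} and a Galilean-type change of variables) we may assume $\int_\T u_0\,dx=0$. Since the only coercive conservation law at negative regularity is $E[u]=\norm{u}_{L^2}^2$, which lives at $L^2$ rather than at $H^s$, I would introduce the Fourier multiplier $I=I_N$ with symbol $m(\xi)=1$ for $\abs\xi\le N$ and $m(\xi)=(\abs{\xi}/N)^s$ for $\abs\xi\ge 2N$, smooth and monotone in between, so that $\norm{u}_{H^s}\lesssim\norm{I_Nu}_{L^2}\lesssim N^{-s}\norm{u}_{H^s}$ uniformly in $N$. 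It then suffices to control $\norm{I_Nu(t)}_{L^2}$ on an arbitrary interval $[0,T]$ after choosing $N=N(T)$ large. To that end I would first upgrade Hirayama's estimate, using that $m$ is slowly varying, to the ``$I$-version'' $\norm{I\px(uv)}_{X^{0,-1/2}}\lesssim\norm{Iu}_{X^{0,1/2}}\norm{Iv}_{X^{0,1/2}}$ (with the usual periodic modifications at the endpoint $b=\frac12$), which produces a solution on $[0,\delta]$ with $\delta\sim\norm{Iu_0}_{L^2}^{-\theta}$ for some $\theta>0$ and $\norm{Iu}_{X^{0,1/2}_\delta}\lesssim\norm{Iu_0}_{L^2}$.

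The heart of the argument is the almost conservation law: I would show
\[
\sup_{t\in[0,\delta]}\abs{\,\norm{Iu(t)}_{L^2}^2-\norm{Iu_0}_{L^2}^2\,}\lesssim N^{-\alpha}\norm{Iu_0}_{L^2}^3
\]
for some $\alpha=\alpha(j)>0$. Differentiating, the skew-adjoint linear part drops and $\frac{d}{dt}\norm{Iu}_{L^2}^2=-\int_\T Iu\cdot I\px(u^2)\,dx$, which on the Fourier side is a trilinear form over $\{\xi_1+\xi_2+\xi_3=0\}$ with multiplier $M(\xi_1,\xi_2,\xi_3)$ that vanishes identically when $\max_i\abs{\xi_i}\lesssim N$ (there $I$ is the identity and $\int_\T\px(u^3)\,dx=0$) and, by the smoothness of $m$ together with a double mean-value/telescoping estimate, is pointwise bounded with a genuine $N$-gain off that region. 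Integrating in time over $[0,\delta]$, one then estimates the resulting space-time trilinear form by the $X^{0,1/2}$ norms, the crucial resource being the large modulation coming from the resonance function $\xi_1^{2j+1}+\xi_2^{2j+1}+\xi_3^{2j+1}$, which on the hyperplane factors as $\xi_1\xi_2\xi_3$ times a symmetric polynomial one must bound from below. This is exactly where Lemma~\ref{lem:algebra} replaces the factorization of Lemma 2.2 in \cite{Hirayama}, and where the $j\ge3$ dispersion is strictly more favourable than the KdV case. I expect this to be the main obstacle: controlling $M$ and the trilinear estimate simultaneously and uniformly down to the endpoint $s=-\frac j2$, and extracting a decay power $\alpha$ large enough for the iteration below.

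Finally I would iterate. On each step of length $\delta\sim\norm{Iu_0}_{L^2}^{-\theta}$ the quantity $\norm{Iu}_{L^2}^2$ grows by at most $N^{-\alpha}\norm{Iu_0}_{L^2}^3$, so as long as it has not doubled one may take roughly $K\sim N^{\alpha}\norm{Iu_0}_{L^2}^{-1}$ steps, reaching time $T\sim K\delta\sim N^{\alpha}\norm{Iu_0}_{L^2}^{-(1+\theta)}\gtrsim N^{\alpha+s(1+\theta)}\norm{u_0}_{H^s}^{-(1+\theta)}$, using $\norm{Iu_0}_{L^2}\lesssim N^{-s}\norm{u_0}_{H^s}$. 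If the exponent $\alpha+s(1+\theta)$ is positive on the whole range $s\ge-\frac j2$, then choosing $N=N(T)\to\infty$ drives $T\to\infty$, giving a polynomial-in-time bound on $\norm{u(t)}_{H^s}$ and hence global well-posedness; the constraint on $N$ only relaxes as $s$ increases, so the same scheme covers the full range. Should the first-generation almost conservation law be too weak near the endpoint, I would pass to the second-generation I-method, adding a trilinear correction $\Lambda_3$ to $\norm{Iu}_{L^2}^2$ tailored to cancel the leading part of $M$, so that the modified energy enjoys a quadrilinear (hence faster-decaying) growth rate, and rerun the iteration; the extra $j\ge3$ dispersion again makes the correction term estimates more favourable than for KdV.
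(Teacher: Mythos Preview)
Your outline is on the right track---it is the I-method, and the resonance factorization of Lemma~\ref{lem:algebra} is indeed the algebraic engine---but the paper's actual proof differs from your plan in two substantive ways.

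First, the paper does not attempt the first-generation almost conservation law at all. It goes directly to the modified energy $E_I^4(t)=\norm{Iu}_{L^2}^2+\Lambda_3(\sigma_3)+\Lambda_4(\sigma_4)$, i.e.\ \emph{two} layers of correction terms, so that $\frac{d}{dt}E_I^4=\Lambda_5(M_5)$ is quintilinear. The work is then split into Proposition~\ref{prop:comparable} (comparability $|E_I^4-E_I^2|\lesssim\norm{Iu}_{L^2}^3+\norm{Iu}_{L^2}^4$, using the pointwise bounds \eqref{eq:extension} and \eqref{eq:M4}) and Proposition~\ref{prop:quinti} (the quintilinear increment $\bigl|\int_0^1\Lambda_5(M_5)\,dt\bigr|\lesssim N^{5s}\norm{Iu}_{Y^0}^5$, proved via the refined bilinear/trilinear estimates of Section~\ref{sec:bi-,tri-} and the $M_5$ bounds of Lemma~\ref{lem:M5}). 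Your fallback only adds the single correction $\Lambda_3$ and works with the quadrilinear increment $\Lambda_4(M_4)$; this is exactly the scheme that for KdV ($j=1$) falls short of the endpoint in \cite{CKSTT1}, and the paper does not try to show that the extra dispersion for $j\ge3$ rescues it. Whether $E_I^3$ alone suffices here is therefore not addressed by the paper; going to $E_I^4$ is what makes the endpoint $s=-\frac j2$ work cleanly.

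Second, the globalization is done by \emph{rescaling}, not by the direct ``$\delta\sim\norm{Iu_0}_{L^2}^{-\theta}$ and count steps'' bookkeeping you sketch. One sets $u_\mu(t,x)=\mu^{-2j}u(\mu^{-2j-1}t,\mu^{-1}x)$, chooses $\mu$ so that $\mu^{-s-2j+\frac12}N^{-s}=\epsilon_0\ll1$, and then the local lifespan is $\sim1$ while the increment per step is $O(N^{5s}\epsilon_0^5)$; this allows $\sim N^{-5s}$ iterations, which after undoing the scaling gives the growth bound \eqref{eq:uniform bound}. The scaling step is why Appendix~\ref{sec:lambda} records the $\mu$-periodic versions of the multilinear estimates. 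Your iteration-without-scaling could in principle be made to work, but the paper's route sidesteps the accounting of how the local time depends on the modified energy and packages the numerology more transparently.
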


\subsection{Nonsqueezing property}\label{subsec:Nonsq prop}
%We say that a flow has a nonsqueezing property if the flow preserve a capacity which has the monotonicity and the normalization\footnote{For more detail for this capacity which is called the invariant symplectic capacity, see \cite{Kuksin:1995ue}.}. 
The first contributor of the nonsqueezing property is Gromov \cite{Gromov:1985ww}. He proved the finite dimensional nonsqueezing theorem by using \emph{Darboux width}. Thereafter, Hofer and Zehnder \cite{Hofer:2011vo} developed this to the symplectic capacity. Furthermore, Kuksin \cite{Kuksin:1995ue} introduced an abstract method that the solution map of a given Hamiltonian PDE can be regarded as an approximate symplectic map on the appropriate function space. Concrete examples are presented by Bourgain \cite{Bourgain:1994tr} for the 1D cubic NLS and Colliander et al. \cite{CKSTT3} for the KdV equation. Recently, Roum\'egoux \cite{Roumegoux:2010sn} also proved the nonsqueezing property of the BBM equation and Mendelson \cite{Mendelson:2014vh} proved the nonsqueezing of the Klein-Gordon equation on $\MB{T}^3$ via a probabilistic approach. Also, the first author and Kwon \cite{HK2015} obtained the result of nonsqueezing property for the coupled KdV-type system without the Miura transform in the symplectic phase space $H^{-\frac12}(\T) \times H^{-\frac12}(\T)$.

First of all, we introduce the finite dimensional nonsqueezing theorem.

\begin{theorem}[Nonsqueezing property: finite dimensional version \cite{Kuksin:1995ue}]\label{thm:finite dimensional nonsq.}
Let $\MC{S}$ be a symplectic map on the $2n$-dimensional phase space. Let $B_R$ and $C_{k,r}$ be a ball of radius $R$ and a cylinder of radius $r$ at $k$-the component, respectively. If
\begin{equation*}
\MC{S}\left(B_R\right) \subseteq C_{k,r},
\end{equation*}
then $r \ge R$.
\end{theorem}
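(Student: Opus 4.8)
The plan is to deduce this from the existence of a single \emph{symplectic capacity}. Recall that a symplectic capacity on $\R^{2n}$ assigns to each open set $U$ a value $c(U)\in[0,\infty]$ such that: (i) $c(U)\le c(V)$ whenever some symplectic embedding maps $U$ into $V$ (monotonicity); (ii) $c(\lambda U)=\lambda^2 c(U)$ (conformality); and (iii) $c(B_1)=c(C_{k,1})=\pi$ (normalization), where $B_1$ is the open unit ball and $C_{k,1}$ the open cylinder of radius $1$ over the $k$-th symplectic plane. Granting such a $c$, the theorem is immediate: since $\mathcal{S}$ and its inverse are symplectic, $c(\mathcal{S}(B_R))=c(B_R)$ by (i), while $c(B_R)=\pi R^2$ and $c(C_{k,r})=\pi r^2$ by (ii) and (iii) (translations are symplectic, so the centers are irrelevant); hence $\mathcal{S}(B_R)\subseteq C_{k,r}$ and (i) give $\pi R^2=c(\mathcal{S}(B_R))\le c(C_{k,r})=\pi r^2$, i.e. $r\ge R$.

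The real content is therefore to construct \emph{one} capacity satisfying the normalization (iii). I would use the Hofer--Zehnder capacity: for a bounded $U$ let $\mathcal{H}(U)$ be the admissible Hamiltonians, i.e. $H\in C^\infty_c(U)$ with $H\ge 0$, $H\equiv 0$ on a nonempty open set and $H\equiv \max H$ outside a compact subset of $U$, and set $c_{HZ}(U)=\sup\{\max H : H\in\mathcal{H}(U)\text{ and the flow of }X_H\text{ has no nonconstant periodic orbit of period}\le 1\}$. Monotonicity and conformality are then built in. The normalization splits into two parts. The lower bound $c_{HZ}(B_R)\ge\pi R^2$ is obtained by displaying an explicit radial Hamiltonian $H(z)=f(\pi|z|^2)$ on $B_R$ whose nonconstant orbits all have period $>1$ while $\max H$ approaches $\pi R^2$. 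The upper bound $c_{HZ}(C_{k,r})\le\pi r^2$ is the hard half: one must show that any admissible $H$ on the cylinder with $\max H>\pi r^2$ necessarily possesses a nonconstant periodic orbit of period $\le 1$. This is proved by a Rabinowitz-type minimax for the symplectic action functional $\mathcal{A}_H$ on the loop space of $C_{k,r}$; the thinness of the cylinder in one symplectic plane is exactly what makes the relevant sublevel sets link, forcing the minimax value to be positive and $\le\pi r^2$, and a critical point at that level is the desired fast orbit.

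The main obstacle is this last minimax step: one has to set up the strongly indefinite variational problem on an infinite-dimensional loop space (or via a finite-dimensional Galerkin reduction), verify the appropriate Palais--Smale condition — which requires a careful cutoff of $H$ together with a priori bounds on almost-critical loops — and identify the linking geometry that pins the minimax level strictly between $0$ and $\pi r^2$. Everything else (the reduction to a capacity, properties (i)--(ii), and the ball lower bound) is either formal or an explicit computation. An alternative is Gromov's original argument via pseudoholomorphic curves: embed $C_{k,r}$ symplectically into $\R^2\times\T^{2n-2}$, fill $\mathcal{S}(B_R)$ by a $J$-holomorphic disc through a prescribed point with boundary on a Lagrangian torus, and compare areas using the monotonicity identity for $J$-holomorphic curves and Gromov compactness; there the obstacle becomes the compactness and nonemptiness of the relevant moduli space. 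Since in the sequel we only use this statement as a black box, we will simply invoke it and refer to \cite{Gromov:1985ww, Hofer:2011vo, Kuksin:1995ue}.
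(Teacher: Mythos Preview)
Your proposal is correct and in fact matches the paper's treatment exactly: the paper does not prove this theorem at all but simply states it as a cited result from \cite{Gromov:1985ww,Hofer:2011vo,Kuksin:1995ue} and uses it as a black box for the truncated flow. Your additional sketch via the Hofer--Zehnder capacity (or the Gromov alternative) is accurate and goes beyond what the paper provides, but since the paper only invokes the statement, your closing sentence --- to cite the references and move on --- is precisely what is done here.
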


Intuitively, Theorem \ref{thm:finite dimensional nonsq.} means that the symplectic map cannot transform any $R$-ball into a hole of $r$-pipe placed in the \emph{basis} direction. To apply Theorem \ref{thm:finite dimensional nonsq.} to \eqref{eq:kdv}, we need a global solution in the phase space with a symplectic form and a symplectic transform. Moreover, we need appropriate truncation of the solution map on the finite dimensional function space.
We firstly find a symplectic form with respect to the given Hamiltonian \eqref{eq:Hamiltonian}. Let $\omega_{-\frac{1}{2}}$ be the symplectic form in $H_0^{-\frac{1}{2}}$ of the form of 
\begin{equation}\label{eq:symplectic form}
\omega_{-\frac{1}{2}}\left(u,v\right) := \int_{\MB{T}} u \partial_x^{-1} v dx,
\end{equation}
for all $u,v \in H_0^{-\frac{1}{2}}$.
Hence, we can rewrite \eqref{eq:kdv} as follows:
\begin{equation*}
u_t = \nabla_{\omega_{-\frac{1}{2}}} H\left(u\left(t\right)\right)
\end{equation*}
by the following observation
\begin{align*}
\omega_{-\frac{1}{2}}\left(v, \nabla_{\omega_{-\frac{1}{2}}}H\left(u\left(t\right)\right)\right)&:= \left.\frac{d}{d\varepsilon}\right|_{\varepsilon=0} H \left(u+\varepsilon v\right) \\
&= \left. \int \partial_x^j \left(u+ \varepsilon v\right) \cdot \partial_x^j v - \frac{1}{2} \left(u+\varepsilon v\right)^2 v dx \right|_{\varepsilon=0} \\
&= \int \partial_x^j u \cdot \partial_x^j v - \frac{1}{2}u^2v dx \\
&= \int \left[\left(-1\right)^j \partial_x^{2j} u - \frac{1}{2}u^2\right]v dx \\
&=  \int \left[\left(-1\right)^{j+1} \partial_x^{2j+1} u + \frac{1}{2}\partial_x\left(u^2\right)\right] \partial_x^{-1} v dx \\
&= \omega_{-\frac{1}{2}} \left(\left(-1\right)^{j+1}\partial_x^{2j+1} u+ \frac{1}{2} \partial_x \left(u^2\right),v\right) \\
&=\omega_{-\frac{1}{2}} \left(v,-\left(-1\right)^{j+1}\partial_x^{2j+1} u- \frac{1}{2} \partial_x \left(u^2\right)\right).
\end{align*}
Since the solution map of \eqref{eq:kdv} is a symplectic transform from $\HS$ to itself,  we can regard the function space and the solution map as the phase space and  the symplectic transform, respectively. Remark that the symplectic form does not depend on $j$, so we do not need to consider other symplectic forms or phase spaces for each $j$. With the obtained phase space and the symplectic transform, we state the second main theorem, the nonsqueezing property of \eqref{eq:kdv}.

\begin{theorem}[Nonsqueezing property: analytic version]\label{thm:Nonsqueezing thm}
Let $j \ge 2$, $ 0 < r< R$, $u_* \in \HS \left( \MB{T}\right)$, $k_0 \in \MB{Z}^*(=\MB{Z}\setminus \left\{0\right\})$, $z \in \MB{C}$ and $T>0$. Then there exists a global $\HS$-solution $u$ to \eqref{eq:kdv} such that
\begin{equation*}
\left\|u_0 - u_*\right\|_{\HS} \leq R
\end{equation*}
and 
\begin{equation*}
\left|k_0\right|^{-1/2} \left| \MC{F}_x\left(S_H\left(T\right)u_0\right) \left(k_0\right) - z\right| >r,
\end{equation*}
where $\MC{F}_x$ and $S_H$ are the spatial Fourier transform and the solution map of \eqref{eq:kdv}, respectively.\end{theorem}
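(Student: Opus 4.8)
The plan is to run the Bourgain--Kuksin approximation scheme (as in \cite{Kuksin:1995ue, Bourgain:1994tr, CKSTT3}): approximate the solution map $S_H(T)$ by the flows of finite-dimensional Hamiltonian truncations, apply the finite-dimensional nonsqueezing theorem (Theorem~\ref{thm:finite dimensional nonsq.}) to each truncation, and pass to the limit. I argue by contradiction. Since $-\tfrac12$ is within the global well-posedness range for every $j\ge 2$ (the first main theorem above for $j\ge 3$, and \cite{Kato} for $j=2$), the map $S_H(T)$ is globally defined on $\HS=H^{-1/2}_0(\T)$, and $u_0\mapsto u$ is a bijection between $\HS$ and the global $\HS$-solutions; hence the negation of the statement reads: $S_H(T)$ maps the closed ball $B_R(u_*)\subseteq\HS$ into the closed cylinder $C:=\set{v\in\HS:\ \abs{k_0}^{-1/2}\abs{\wh v(k_0)-z}\le r}$. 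Fix $\vep>0$ with $r+\vep<R$; the goal is to deduce $r\ge R-\vep$ and then let $\vep\to 0$.

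First I would set up the truncated system. For $N\in\N$ let $E_N\subseteq\HS$ be the $2N$-dimensional real space of mean-zero trigonometric polynomials of frequency $\le N$, with Dirichlet projection $P_{\le N}$, and consider
\[\pt v + (-1)^{j+1}\px^{2j+1} v + \tfrac12 P_{\le N}\px(v^2)=0,\qquad v(0)\in E_N .\]
This ODE leaves $E_N$ invariant, and rerunning the computation after \eqref{eq:symplectic form} with $H^N[v]=\int_{\T}\tfrac12(\px^j v)^2-\tfrac16 v^3\,dx$ shows that on $E_N$ it is the Hamiltonian flow of $H^N$ for $\omega_{-1/2}|_{E_N}$; moreover $\int_{\T}v^2$ is conserved along it, so the truncated flow $S_H^N(t)$ is globally defined and is a symplectic map of $(E_N,\omega_{-1/2})$. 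In Darboux coordinates for $\omega_{-1/2}|_{E_N}$ (after an affine symplectic translation) an $\HS$-ball of radius $\rho$ becomes a standard ball of radius $\rho$ and $C$ becomes a standard cylinder of radius $r$ over the $k_0$-plane, so $S_H^N(T)$ is exactly the kind of map Theorem~\ref{thm:finite dimensional nonsq.} addresses.

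The core of the argument is an approximation lemma: for every $\vep>0$ there is $N_0=N_0(\vep,T,\norm{u_*}_{\HS},R)$ such that for all $N\ge N_0$ and all $v_0\in E_N$ with $\norm{v_0}_{\HS}\le\norm{u_*}_{\HS}+R$,
\[\sup_{t\in[0,T]}\norm{S_H(t)v_0-S_H^N(t)v_0}_{\HS}<\vep .\]
To prove it I set $w=S_H(t)v_0-S_H^N(t)v_0$ and note that, with $u=S_H(t)v_0$ and $v=S_H^N(t)v_0$, one has $\pt w+(-1)^{j+1}\px^{2j+1}w=-\tfrac12 P_{>N}\px(u^2)-\tfrac12 P_{\le N}\px((u+v)w)$, so running the $X^{s,b}$ contraction on a short interval $[0,\delta]$ whose length depends only on $\norm{v_0}_{\HS}$ and absorbing the $w$-term leaves $\norm{w}_{X^{-\frac12,\frac12}_\delta}\lesssim\norm{P_{>N}(u^2)}_{X^{-\frac12,-\frac12+}_\delta}\lesssim N^{-s_0}\norm{u^2}_{X^{-\frac12+s_0,-\frac12+}_\delta}$ for some $s_0>0$, while the bilinear estimate bounds $\norm{u^2}_{X^{-\frac12+s_0,-\frac12+}_\delta}\lesssim\norm{u}^2_{X^{-\frac12,\frac12}_\delta}$. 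One then iterates over $[0,T]$; because the I-method analysis behind the global well-posedness result above provides global-in-time $\HS$-bounds on $u$ and $v$ that are \emph{uniform} in $N$ (conservation of $\int v^2$ cannot be used here, since $\norm{P_{\le N}u_0}_{L^2}$ is not bounded uniformly in $N$), the step length $\delta$ and all constants stay uniform, and the accumulated error is $\lesssim_T N^{-s_0}\to 0$, uniformly over the stated ball. The gain $s_0>0$ at regularity $-\tfrac12$ is precisely where the higher order of the equation enters: for $j\ge 2$ the resonance function $n_1^{2j+1}-n_2^{2j+1}-n_3^{2j+1}$ is large enough off resonance that the bilinear estimate improves, so the approximation can be run directly in the symplectic space $\HS$ and the Miura transform used in \cite{CKSTT3} is not needed.

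Finally I would assemble the pieces. Choose $N\ge N_0$ large enough that also $\norm{P_{>N}u_*}_{\HS}<\vep/2$ and the approximation error is $<\vep/(2C_{k_0})$, where $C_{k_0}$ is the constant in $\abs{k_0}^{-1/2}\abs{\wh w(k_0)}\lesssim\norm{w}_{\HS}$. If $v_0\in E_N$ satisfies $\norm{v_0-P_{\le N}u_*}_{\HS}\le R-\vep/2$, then $\norm{v_0-u_*}_{\HS}<R$, so $v_0\in B_R(u_*)$ and the contradiction hypothesis gives $S_H(T)v_0\in C$; combining with the approximation lemma yields $\abs{k_0}^{-1/2}\abs{\ft_x(S_H^N(T)v_0)(k_0)-z}\le r+\vep/2$. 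Hence the symplectic map $S_H^N(T)$ of $(E_N,\omega_{-1/2})$ sends the ball $\set{v_0\in E_N:\ \norm{v_0-P_{\le N}u_*}_{\HS}\le R-\vep/2}$ into the cylinder $\set{v\in E_N:\ \abs{k_0}^{-1/2}\abs{\wh v(k_0)-z}\le r+\vep/2}$; passing to Darboux coordinates and invoking Theorem~\ref{thm:finite dimensional nonsq.} gives $r+\vep/2\ge R-\vep/2$, i.e.\ $r\ge R-\vep$. Letting $\vep\to 0$ produces $r\ge R$, contradicting $r<R$, which proves the theorem. The hard part will be the approximation lemma, specifically making it uniform both over the ball of data and over the whole interval $[0,T]$; this is what forces the combined use of the I-method global bounds and of the regularity gain in the higher-order bilinear estimate, and it is the only step where $j\ge 2$ (rather than just $j\ge 1$) is needed.
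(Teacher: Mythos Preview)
Your proposal follows the same strategy as the paper: truncate to a finite-dimensional Hamiltonian system, apply Gromov's theorem (Lemma~\ref{lem:Nonsqueezing of trun. fow}), and close via an approximation between $S_H$ and $S_H^N$ that exploits the $N_{\max}^{1-j}$ gain available for $j\ge 2$. The packaging differs slightly: the paper splits the approximation into two pieces (change of high-frequency data, Proposition~\ref{prop:approx}; change of flow, Proposition~\ref{prop:approx2}) and only tracks a fixed Fourier mode via Proposition~\ref{prop:main approx}, then argues directly; you instead argue by contradiction and use a single lemma controlling the full $\HS$-norm of $S_H(t)v_0-S_H^N(t)v_0$ for $v_0\in E_N$. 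Since the $u_0$ produced by finite-dimensional nonsqueezing already lies in $E_N$, your streamlining is legitimate and in fact shows that the paper's Proposition~\ref{prop:approx} is not strictly needed for the final assembly. One point to tighten: the forcing term is $P_{>N}\partial_x(u^2)$, not $P_{>N}(u^2)$, and the estimate you then invoke, $\norm{\partial_x(u^2)}_{X^{-1/2+s_0,-1/2+}}\lesssim\norm{u}_{X^{-1/2,1/2}}^2$, is a genuine \emph{smoothing} bilinear estimate (output regularity strictly above input), not the standard one with matching regularities. It does hold for $j\ge 2$---it follows from the frequency-localized bound~\eqref{eq:bilinear3} since $N_3^{s_0}N_{\max}^{1-j}$ is summable for $0<s_0<j-1$---but you should state and justify it rather than call it ``the bilinear estimate''. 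The paper sidesteps this by applying~\eqref{eq:bilinear3} directly to the high-frequency interactions to read off the $N^{1-j}$ decay without passing through a higher-regularity norm.
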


The proof of Theorem \ref{thm:Nonsqueezing thm} follows arguments in \cite{Bourgain:1994tr} and \cite{CKSTT3}. In \cite{Bourgain:1994tr}, Bourgain proved the nonsqueezing property of the nonlinear Schr\"odinger equation on $L^2_x\left(\MB{T}\right)$ space. After taking the frequency truncation to the original equation, he applied Gromov's nonsqueezing theorem for the finite dimensional Hamiltonian system. From the approximation argument, the result is extended to the infinite dimensional NLS flow. Bourgain used basic (or a sharp) frequency truncation and $X^{s,b}$ space for this argument. 

Later, this argument extended by Colliander et al. \cite{CKSTT3} for the KdV flow on its phase space $H^{-1/2}_x\left(\MB{T}\right)$ with two more additional ingredients. Firstly, they found a counter example that the sharp truncated flow does not approximate to the original flow. Hence, they used a smooth truncation to resolve this problem. Secondly, they used the Miura transform to close the approximation argument. Indeed, they obtained the approximation result for the KdV equation by using the mKdV approximation result and the bi-continuity of the Miura transform. They proved approximation by truncated flow for mKdV flow and using the bi-continuity of the Miura transform in the some sense, concluded the approximation for the KdV flow. 

Like former results, our main tasks are also to find appropriate truncation and prove the approximation argument. We use the sharp truncation like Bourgain's approach. Even if (\ref{eq:kdv}) has the same symplectic phase space and the strength of the nonlinearity as in the KdV equation, much stronger modulation effect than that in the KdV equation facilitates that the finite dimensional system well approximates to the original infinite dimensional system without using the smooth truncation and the Miura transform. We note that with analytic version, the nonsqueezing property tells that the solution  flow does not transfer the energy between low and  high frequencies on the symplectic manifold, $\HS$. 

From now on, we consider a concrete truncated equation and other objects. Let $P_{\le N}$ be the Fourier projection for the spatial frequency as in \eqref{eq:Fourier multiplier}, we introduce the truncated equation
\begin{equation}\label{eq:truncated equation}
\left\{\begin{array}{ll}
\pt u + (-1)^{j+1} \px^{2j+1}u + P_{\le N}\left(\frac12 \px(u^2)\right)=0, \hspace{1em} &(t,x) \in \R \times \T, \\
u(x,0) = u_0(x), &u_0 \in P_{\leq N}H_0^s(\T).
\end{array}
\right.
\end{equation}
Denote the nonlinear flow of \eqref{eq:truncated equation} by $S_H^N(t)$. Using \eqref{eq:symplectic form}, we know that \eqref{eq:truncated equation} has the truncated Hamiltonian,
\begin{equation*}
H_N\left(u\left(t\right)\right) := \int_{\MB{T}} \frac{1}{2} \left(\partial_x^j u\right) ^2 - P_{\le N}\left(\frac{1}{6} u^3\right) dx.
\end{equation*}
Thus, this flow is the finite dimensional symplectic map, so we can apply Theorem \ref{thm:finite dimensional nonsq.} directly (see Lemma \ref{lem:Nonsqueezing of trun. fow}). Also, the equation (\ref{eq:truncated equation}) is locally and globally well-posed by using the similar argument as in \cite{Hirayama} and Section 3, respectively. In Section 4, we provide the proof of the approximation argument, and hence, we can completely obtain the nonsqueezing property of (\ref{eq:kdv}). 

We now restate Theorem \ref{thm:Nonsqueezing thm} geometrically for better understanding. To do this, we may define balls and cylinders. Let  $B^{\infty}_r\left(u_*\right)$ be an infinite dimensional ball in ${H^{-1/2}_0}$ of radius $r$ and centered at $u_* \in {H^{-1/2}_0}$ and  $C^{\infty}_{k,r}\left(z\right)$ be an infinite dimensional cylinder in ${H^{-1/2}_0}$ of radius $r$ and centered at $z \in \MB{C}$:
\begin{align*}
B^{\infty}_r\left(u_*\right) := \left\{u \in {H^{-1/2}_0} : \left\|u-u_*\right\|_{{H^{-1/2}_0}} \leq r\right\}, \\
C^{\infty}_{k,r}\left(z\right) := \left\{u \in {H^{-1/2}_0} : \left|k\right|^{-1/2}\left|\widehat{u}\left(k\right)-z\right| \leq r\right\}.
\end{align*}
The following is the geometric version of Theorem \ref{thm:Nonsqueezing thm} with respect to \eqref{eq:kdv}
\begin{theorem}[Nonsqueezing property: geometric version]
Let $0 < r< R$, $u_* \in \HS \left( \MB{T}\right)$, $k_0 \in \MB{Z}^*$, $z \in \MB{C}$ and $T>0$. Then
\begin{equation*}
\SK \left(T\right) \left( B^{\infty}_R \left(u_*\right)\right) \not \subseteq C^{\infty}_{k_0,r} \left(z\right),
\end{equation*}
where $S_H$ be the solution map of \eqref{eq:kdv} when $j>1$.
\end{theorem}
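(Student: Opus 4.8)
The plan is to obtain the geometric version as a direct reformulation of the analytic version, Theorem~\ref{thm:Nonsqueezing thm}. Fix $j \ge 2$, $0 < r < R$, $u_* \in \HS(\T)$, $k_0 \in \MB{Z}^*$, $z \in \MB{C}$ and $T > 0$, and apply Theorem~\ref{thm:Nonsqueezing thm} with these data. It produces a global $\HS$-solution $u$ of \eqref{eq:kdv} whose initial datum $u_0 = u(0)$ satisfies $\norm{u_0 - u_*}_{\HS} \le R$ and $\abs{k_0}^{-1/2}\abs{\MC{F}_x(\SK(T)u_0)(k_0) - z} > r$. The first inequality is exactly the statement $u_0 \in B^{\infty}_R(u_*)$, hence $\SK(T)u_0 \in \SK(T)\big(B^{\infty}_R(u_*)\big)$; the second is exactly $\abs{k_0}^{-1/2}\abs{\widehat{\SK(T)u_0}(k_0) - z} > r$, hence $\SK(T)u_0 \notin C^{\infty}_{k_0,r}(z)$. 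Thus $\SK(T)\big(B^{\infty}_R(u_*)\big)$ contains a point outside $C^{\infty}_{k_0,r}(z)$, which is precisely the asserted non-inclusion $\SK(T)\big(B^{\infty}_R(u_*)\big) \not\subseteq C^{\infty}_{k_0,r}(z)$.

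All the substance therefore lies in Theorem~\ref{thm:Nonsqueezing thm}, which I would prove in three steps following \cite{Bourgain:1994tr} and \cite{CKSTT3}. First, by the local and global well-posedness of the truncated equation \eqref{eq:truncated equation}, the flow $S_H^N(t)$ is a well-defined symplectomorphism of the $2N$-dimensional phase space $P_{\le N}\HS$ for the form $\omega_{-\frac12}$, and Theorem~\ref{thm:finite dimensional nonsq.} gives the nonsqueezing property for $S_H^N(T)$ there (Lemma~\ref{lem:Nonsqueezing of trun. fow}). Second, I would establish the approximation estimate: on any fixed ball of $\HS$ and any fixed time horizon $T$, $\sup \norm{\SK(T)u_0 - S_H^N(T)P_{\le N}u_0}_{\HS} \to 0$ as $N \to \infty$. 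Third, arguing by contradiction: if $\SK(T)\big(B^{\infty}_R(u_*)\big) \subseteq C^{\infty}_{k_0,r}(z)$, then for $N$ large the approximation forces $S_H^N(T)$ to map a ball of radius slightly smaller than $R$ (centered near $P_{\le N}u_*$) into a cylinder of radius slightly larger than $r$ in the $k_0$-direction, contradicting the finite-dimensional nonsqueezing once the slack is taken below $R - r$.

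The main obstacle is the second step, the sharp-truncation approximation in the low-regularity symplectic space $\HS$. Writing the difference through Duhamel's formula and splitting the nonlinear discrepancy into the piece removed by $P_{\le N}$ and the piece $P_{\le N}\big(\frac12\px(u^2) - \frac12\px((P_{\le N}u)^2)\big)$, each term must be bounded in an $X^{s,b}$-type space adapted to the operator $\px^{2j+1}$ and then summed over short subintervals. The reason a sharp cutoff (and no Miura transform, unlike \cite{CKSTT3}) is enough here is that the resonance function of \eqref{eq:kdv} behaves like a symbol of order $2j$ rather than the quadratic one of KdV, so the modulation gain from the non-resonant interactions supplies enough smoothing in the Duhamel term to absorb the $N$-loss created by the sharp projection; the truncation error is then genuinely $o(1)$ in $\HS$ rather than merely bounded. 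Making this gain quantitative at the endpoint $s = -\frac12$, uniformly on bounded sets of data and stably under the time iteration, is the delicate part.
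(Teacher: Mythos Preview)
Your derivation of the geometric statement from the analytic Theorem~\ref{thm:Nonsqueezing thm} is exactly what the paper does (the geometric version is presented as a restatement), and your three-step outline for the analytic version mirrors the paper's argument in Section~\ref{sec:nonsqueezing}. One refinement: the paper establishes the approximation only on low frequencies (Propositions~\ref{prop:approx}--\ref{prop:approx2}, combined into Proposition~\ref{prop:main approx} at the single mode $k_0$), not in the full $\HS$ norm uniformly on balls as you state in step two---but since your step three only tests the $k_0$-coordinate this weaker version suffices, and the paper then argues directly (producing $u_0$ from Lemma~\ref{lem:Nonsqueezing of trun. fow} with $\varepsilon$ slack and transferring via the triangle inequality) rather than by contradiction.
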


\subsection{Notations}
We clear some terminologies for our results. We use the spatial Fourier transform, the inverse Fourier transform and the space-time Fourier transform as follows:
\begin{align*}
\MC{F}_x \left(u\right) =\widehat{u}\left(k\right) &=  \int _{\MB{T}} e^{-ikx}u\left(x\right) dx, \\
u \left(x \right) &=  \frac{1}{2\pi}\int_{\Z}  e^{ikx}\widehat{u}\left(k\right) dk:=\frac{1}{2 \pi}\sum_{k \in \MB{Z}} \widehat{u}\left(k\right) e^{ikx},\\
\MC{F}\left(u\right) = \wt{u}\left(\tau,k\right)  &= \iint_{\MB{T} \times \MB{R}} e^{-ikx}e^{-i\tau t} u\left(x,t\right) dxdt.
\end{align*}
We have the spatial Sobolev space
\begin{equation}\label{eq:H^s space}
\left\|u\right\|_{H^s} = \left\|\left<k\right>^s \widehat{u}\right\|_{\ell^2_k} := \frac{1}{\left(2\pi\right)^{1/2}} \left(\sum_{k \in \MB{Z}} \left<k\right>^{2s}\left|\widehat{u}\right|^2\right)^{1/2}
\end{equation}
for $s \in \MB{R}$, where $\left<k\right> = \left(1+\left|k\right|^2\right)^{1/2}$. For each dyadic number $N$, we define the Fourier multipliers,
\begin{equation}\label{eq:Fourier multiplier}
\begin{split}
\widehat{P_Nu}\left(k\right) := 1_{N \leq \left|k\right| <2N}\left(k\right) \widehat{u}\left(k\right), \\
\widehat{P_{\leq N} u}\left(k \right) := 1_{\left|k\right| \leq N}\left(k\right) \widehat{u}\left(k\right), \\
\widehat{P_{\geq N} u}\left(k \right) := 1_{\left|k\right| \geq N}\left(k\right) \widehat{u}\left(k\right),
\end{split}
\end{equation}
where $1_{\Omega}$ is a characteristic function on $\Omega$.  By the mean preserving \eqref{eq:conservation laws} and the Galilean transform, we have the mean zero function space with the same norm as in (\ref{eq:H^s space}) as follows:
\begin{equation*}
H^s_0 = \left\{u \in H^s :  \int_{\MB{T}} u =0 \right\}.
\end{equation*}

We define the general $X^{s,b}$ norm associated to \eqref{eq:kdv},
\begin{equation*}\label{eq:X^sb norm}
\left\|u\right\|_{X^{s,b}} = \left\|\left<k\right>^s\left<\tau-k^{2j+1}\right>^b \widetilde{u} \right\|_{L^2_{\tau}\ell^2_k}.
\end{equation*}
Using this, we define $Y^s$ and $Z^s$ spaces for the solution and nonlinear term under the norms
\[\norm{f}_{Y^s} = \norm{f}_{X^{s,\frac12}} + \norm{\bra{k}^s\wt{f}}_{\ell_k^2L_{\tau}^1},\]
\[\norm{f}_{Z^s} = \norm{f}_{X^{s,-\frac12}} + \norm{\bra{k}^s\bra{\tau - k^{2j+1}}^{-1}\wt{f}}_{\ell_k^2L_{\tau}^1}.\]

For $x, y \in \R_+$, $x \lesssim y$ denotes $x \le Cy$ for some $C >0$ and $x \sim y$ means $x \lesssim y$ and $y \lesssim x$. Using this, we denote $f = O(g)$ by $f \lesssim g$ for positive real-valued functions $f$ and $g$. Moreover, $x \ll y$ denotes $x \le cy$ for small positive constant $c$. Let $a_1,a_2,a_3 \in \R$ and $b_1,b_2,b_3,b_4 \in \R$. The quantities $a_{max} \ge a_{med} \ge a_{min}$ can be defined to be the maximum, median and minimum values of $a_1,a_2,a_3$, respectively, Also, $b_{max} \ge b_{sub} \ge b_{thd} \ge b_{min}$ can be defined similarly as before.

The paper is organized as follows: In Section \ref{sec:bi-,tri-}, we give algebraic results for the resonant functions, and prove the bi- and trilinear estimates for the global well-posedness and the nonsqueezing property. In Section \ref{sec:global}, we prove the global well-posedness of \eqref{eq:kdv}. In Section \ref{sec:nonsqueezing}, we prove the nonsqueezing property of the solution flow of \eqref{eq:kdv} by showing the approximation argument between the original and truncated flows.

\textbf{Acknowledgement.} The authors would like to thank their advisor Soonsik Kwon for his helpful comments and encouragement through this research problem. The authors are partially supported by NRF (Korea) grant 2015R1D1A1A01058832.

\section{Bi- and Trilinear estimates}\label{sec:bi-,tri-}
In this section, we will prove some algebraic analysis, bi- and trilinear estimates which are useful tools to prove the global well-posedness and nonsqueezing property of \eqref{eq:kdv} in section \ref{sec:global} and \ref{sec:nonsqueezing}, respectively. We first observe some algebraic analysis results.

\begin{lemma}\label{lem:algebra}
Let $j \in \N$.

{\rm{(a)}} If $x,y,z \in \R$ with $x+y+z=0$. Then we have 
\begin{equation}\label{eq:algebra1}
P_3(x,y,z) = x^{2j+1} + y^{2j+1} + z^{2j+1} = xyz\cdot Q_3(x,y,z),
\end{equation}
where $|Q_3(x,y,z)| \sim \max(|x|,|y|,|z|)^{2j-2}$.

{\rm{(b)}} If $x,y,z,w \in \R$ with $x+y+z+w=0$. Then we have 
\begin{equation}\label{eq:algebra2}
P_4(x,y,z,w) = x^{2j+1} + y^{2j+1} + z^{2j+1} +w^{2j+1}= (x+y)(x+z)(x+w)\cdot Q_4(x,y,z,w),
\end{equation}
where $|Q_4(x,y,z,w)| \sim \max(|x|,|y|,|z|,|w|)^{2j-2}$.
\end{lemma}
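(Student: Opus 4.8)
The plan is to establish both identities by reducing to elementary symmetric functions and then extracting the claimed factorization. For part (a), set $z = -(x+y)$ and view $P_3(x,y,z) = x^{2j+1} + y^{2j+1} - (x+y)^{2j+1}$ as a polynomial in $x$ (or, symmetrically, in two variables $x,y$). First I would check that $P_3$ vanishes whenever $x=0$, $y=0$, or $x+y=0$ (i.e. $z=0$): each of these makes two of the three summands cancel and the third vanish, since odd powers satisfy $(-a)^{2j+1} = -a^{2j+1}$. Hence $xyz$ divides $P_3$ as a polynomial, and the quotient $Q_3$ is a symmetric polynomial in $x,y,z$ (on the hyperplane $x+y+z=0$) of degree $2j+1 - 3 = 2j-2$. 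Since $Q_3$ is a nonzero symmetric polynomial, homogeneous of degree $2j-2$, and on the constraint set it can be written in terms of $e_2 = xy+yz+zx$ and $e_3 = xyz$ (as $e_1 = 0$), the estimate $|Q_3| \sim \max(|x|,|y|,|z|)^{2j-2}$ follows from homogeneity together with the fact that $Q_3$ has no zeros on the unit sphere intersected with $\{e_1 = 0\}$ away from the origin — equivalently, the leading behavior is governed by $\max(|x|,|y|,|z|)^{2j-2}$ because when one variable dominates, say $|x| = \max$, one has $|y|,|z| \lesssim |x|$ and $x \approx -z$ (or $x \approx -y$), so $P_3(x,y,z) \approx$ (a nonzero multiple of) $x^{2j-2} \cdot xyz$ after Taylor expanding $(x+y)^{2j+1}$ around $y=0$; a short induction or direct binomial computation pins down the constant and confirms it is nonvanishing.

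For part (b), I would argue analogously but now with the substitution $w = -(x+y+z)$, so that $P_4(x,y,z,w) = x^{2j+1}+y^{2j+1}+z^{2j+1} - (x+y+z)^{2j+1}$. Here the claim is that $(x+y)(x+z)(x+w)$ divides $P_4$. Note $x+w = -(y+z)$, $x+y = -(z+w)$, $x+z = -(y+w)$, so the three factors are (up to sign) the three distinct pair-sums $y+z$, $z+w$, $y+w$ — wait, more precisely the partition of the four indices into complementary pairs gives exactly three values $x+y = -(z+w)$, $x+z = -(y+w)$, $x+w = -(y+z)$, and one checks that $P_4$ vanishes when any such pair-sum is zero: if $x+y = 0$ then $z+w=0$ too, and the four terms cancel in pairs. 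Thus $(x+y)(x+z)(x+w)$ divides $P_4$ in the polynomial ring restricted to $\{x+y+z+w=0\}$, and the quotient $Q_4$ is homogeneous of degree $2j+1-3 = 2j-2$; the same domination argument as in (a) — assuming $|x| = \max$, so the other three are $O(|x|)$ and at least one pair-sum is comparable to $|x|$ — yields $|Q_4| \sim \max(|x|,|y|,|z|,|w|)^{2j-2}$.

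The main obstacle I anticipate is not the divisibility (which is a clean root-counting argument) but the \emph{sharp two-sided bound} on $Q_3$ and $Q_4$, i.e. proving that the comparison constant is uniform and that $Q$ genuinely does not degenerate below $\max(\cdot)^{2j-2}$. The upper bound $|Q| \lesssim \max(\cdot)^{2j-2}$ is immediate from homogeneity and the triangle inequality applied to the finitely many monomials. For the lower bound one must rule out near-cancellation: by homogeneity it suffices to work on the compact set $\max(|x|,|y|,|z|) = 1$ (resp. with four variables) intersected with the linear constraint, and to show $Q$ has no zeros there. The cleanest route is probably to produce an explicit formula: expanding $(x+y)^{2j+1}$ via the binomial theorem and dividing by $xy(x+y)$ — using $z = -(x+y)$ — gives $Q_3 = \sum_{\ell=1}^{2j-1} \binom{2j+1}{\ell} \frac{x^{\ell-1} y^{2j-1-\ell} \cdot (\text{something})}{\cdots}$; carrying this out carefully shows all coefficients in the resulting polynomial in $x,y$ have the same sign, so no cancellation can occur and $|Q_3| \gtrsim \max(|x|,|y|)^{2j-2}$. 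The same explicit-coefficient bookkeeping handles $Q_4$ after first grouping $P_4 = [x^{2j+1} + y^{2j+1} - (x+y)^{2j+1}] + [(x+y)^{2j+1} + z^{2j+1} + w^{2j+1}]$ and using part (a) twice, or alternatively by writing $P_4$ directly as a combination of products of pair-sums. I would present the sign-definiteness of the binomial coefficients as the key computational lemma underlying both estimates.
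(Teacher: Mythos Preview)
Your overall strategy --- factor $P_3$ and $P_4$ by checking roots, get the upper bound on $Q$ from homogeneity, and get the lower bound by showing $Q$ cannot vanish on the normalized constraint set --- is sound and different from what the paper does. The paper never treats $Q_4$ as an abstract polynomial whose zero set must be analyzed; instead it orders $|x|\ge|y|\ge|z|\ge|w|$, splits into three size regimes ($|x|\sim|y|\gg|z|$; $|x|\sim|z|\gg|w|$; $|x|\sim|w|$), and in each regime estimates $P_4/[(x+y)(x+z)(x+w)]$ directly using the mean value theorem (once in Case I, twice in Case III) together with the observation that two of the three factors $(x+y),(x+z),(x+w)$ are automatically $\sim|x|$. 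This case-by-case MVT computation simultaneously gives the upper and lower bounds on $|Q_4|$ without any compactness or sign argument. Your route, by contrast, isolates the lower bound as the only nontrivial point and tries to dispose of it structurally.

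The genuine gap is in your proposed mechanism for the lower bound on $Q_4$. The claim ``all coefficients have the same sign, so no cancellation can occur'' is already fragile for $Q_3$: a positive-coefficient polynomial in $x,y$ can certainly vanish at mixed-sign inputs (e.g.\ $(x+y)^2$), so you implicitly need the extra observation that, since $x+y+z=0$, two of the three variables share a sign and you may substitute those two as the free variables --- then positivity of coefficients does give $Q_3\gtrsim\max^{2j-2}$. For $Q_4$ this rescue is not available: four reals summing to zero need not contain three of the same sign, and your decomposition $P_4=(x+y)\bigl[zw\,Q_3(x+y,z,w)-xy\,Q_3(x,y,-(x+y))\bigr]$ visibly has a minus sign, so no sign-definiteness survives. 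Equivalently, in symmetric-function language $Q_4$ is a polynomial in $e_2,e_3,e_4$ (with $e_1=0$), and $e_4=xyzw$ has indefinite sign, so the positivity trick that worked for $Q_3$ (where only $-e_2=\tfrac12 p_2\ge0$ and $e_3^2\ge0$ enter) breaks down. Your compactness alternative would work, but it requires proving that $Q_4$ does not vanish \emph{anywhere} on the sphere, including at limit points where one or two pair-sums tend to zero (e.g.\ $(1,-1,1,-1)$, where both $x+y$ and $x+w$ vanish); you have not supplied that argument. The paper's MVT cases are precisely what handles these degenerate configurations cleanly, and I would recommend either adopting that approach for part~(b) or, if you want to keep the compactness framework, adding a direct proof that $P_4=0$ on $\{e_1=0\}$ forces some pair-sum to vanish (a convexity argument on $t\mapsto t^{2j+1}$ does this) together with a local computation of $Q_4$ at those degenerate points.
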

\begin{proof}
(a) can be obtained by the similar argument for (b) (or see \cite{Hirayama}). Hence, we only prove the second part of Lemma \ref{lem:algebra}. We may assume that $|x| \ge |y| \ge |z| \ge |w|$ without loss of generality. If at least one of $x+y$, $x+z$ and $x+w$ is zero, we can easily see $x^{2j+1} + y^{2j+1} + z^{2j+1} +w^{2j+1} = 0$ and thus it suffices to show $|Q_4(x,y,z,w)| \sim \max(|x|,|y|,|z|,|w|)^{2j-2}$. 

\textbf{Case I.} $|x|\sim|y|\gg|z|$. From $x+y+z+w =0$, we may assume that $x>0$ and $-y >0$. Then, \eqref{eq:algebra2} is equivalent that for $x,y >0$, 
\[x^{2j+1} - y^{2j+1} + z^{2j+1} +w^{2j+1}= (x-y)(x+z)(x+w)\cdot Q_4'(x,y,z,w),\]
where $x-y+z+w=0$ and $|Q_4| = |Q_4'|$. By the mean value theorem (MVT), we have
\begin{equation}\label{eq:calculation1}
\frac{x^{2j+1} - y^{2j+1} + z^{2j+1} +w^{2j+1}}{x-y} = (x^{\ast})^{2j} + \frac{z^{2j+1} +w^{2j+1}}{x-y},
\end{equation}
for some $y < x^{\ast} < x$. For the rest term of the right-hand side of \eqref{eq:calculation1}, from the following identity
\begin{align*}
\frac{z^{2j+1} +w^{2j+1}}{x-y}&= - \frac{z^{2j+1} +w^{2j+1}}{z+w} \\
&=- (z^{2j} - z^{2j-1}w + \cdots -zw^{2j-1}+w^{2j}), 
\end{align*}
and $|z| \ll |x|$, we have
\[\left| \frac{z^{2j+1} +w^{2j+1}}{x-y} \right| \ll |x|^{2j}.\]
Hence, we conclude that
\[\left|\frac{x^{2j+1} - y^{2j+1} + z^{2j+1} +w^{2j+1}}{x-y} \right| \sim |x|^{2j},\]
which implies $|Q_4'| \sim |x|^{2j-2}$ from $|x+z|,|x+w| \sim |x|$.

\textbf{Case II.} $|x| \sim |z| \gg |w|$. From $x+y+z+w =0$, we may assume that $x>0$ and $-y,-z >0$. Moreover, we have $|x+y|,|x+z| \sim |x|$ and $|x^{2j+1} + y^{2j+1} + z^{2j+1} +w^{2j+1}| \sim |x|^{2j+1}$ , and thus $|Q_4| \sim |x|^{2j-2}$.

\textbf{Case III.} $|x|\sim|w|$. We may assume that $x,w >0$ and $-y,-z>0$. Then, similarly as before, \eqref{eq:algebra2} is equivalent that for $x,y,z,w >0$, 
\[x^{2j+1} - y^{2j+1} - z^{2j+1} +w^{2j+1}= (x-y)(x-z)(x+w)\cdot Q_4'(x,y,z,w),\]
where $x-y-z+w=0$ and $|Q_4| = |Q_4'|$. Using the MVT twice, we can obtain that
\begin{align*}
\frac{x^{2j+1} - y^{2j+1} - z^{2j+1} +w^{2j+1}}{(x-y)(x-z)} &= \frac{(x^{\ast})^{2j} - (z^{\ast})^{2j}}{x-z}, \hspace{1.5em} y < x^{\ast} < x, ~ w < z^{\ast} < z\\
&= (x^{\ast\ast})^{2j-1}, \hspace{1.5em} z^{\ast} < x^{\ast\ast} < x^{\ast}. 
\end{align*} 
Hence, we conclude from $|x+w| \sim |x|$ that $|Q_4'| \sim |x|^{2j-2}$.
\end{proof}

Now, we state the $L^4$-Strichartz estimate which is a useful tool to prove bi- and trilinear estimates.
\begin{lemma}\label{lem:strichartz}
Let $j \in \MB{N}$. For any function $u \in \T \times \R$, we have the $L^4$-Strichartz estimate for \eqref{eq:kdv}
\begin{equation}\label{eq:strichartz}
\norm{u}_{L_{t,x}^4} \lesssim \norm{u}_{X^{0,\frac{j+1}{2(2j+1)}}}.
\end{equation}
In particular, we have $\norm{u}_{L_{t,x}^4} \lesssim \norm{u}_{X^{0,\frac13}}$.
\end{lemma}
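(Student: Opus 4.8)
The plan is to transfer the estimate to the space--time Fourier side and reduce it to a lattice-point counting problem governed by the resonance factorization in Lemma~\ref{lem:algebra}(a). Writing $\norm{u}_{L^4_{t,x}}^2=\norm{u\ol{u}}_{L^2_{t,x}}$ and using $\norm{\ol{u}}_{X^{0,b}}=\norm{u}_{X^{0,b}}$, by Plancherel it is enough to prove the bilinear estimate
\[
\norm{uv}_{L^2_{t,x}}\lesssim \norm{u}_{X^{0,b}}\norm{v}_{X^{0,b}},\qquad b=\tfrac{j+1}{2(2j+1)}.
\]
I would first make Littlewood--Paley decompositions $u=\sum_{N_1}u_{N_1}$, $v=\sum_{N_2}v_{N_2}$ ($u_N=P_N u$) in the spatial frequency and, where needed, decompose each factor further in the modulation variable $\bra{\tau-k^{2j+1}}\sim L$; assume $N_1\ge N_2$ (the other case is symmetric). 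On the Fourier side the product is the convolution $\wt{u_{N_1}v_{N_2}}(\tau,k)=\sum_{k_1}\int \wt{u_{N_1}}(\tau_1,k_1)\,\wt{v_{N_2}}(\tau-\tau_1,k-k_1)\,d\tau_1$, supported in $|k_1|\sim N_1$ and $|k-k_1|\sim N_2$.

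Applying the Cauchy--Schwarz inequality in the $(\tau_1,k_1)$ sum/integral followed by Plancherel, the bound for a fixed dyadic block reduces to estimating $\sup_{\tau,k}\abs{R(\tau,k)}$, where $R(\tau,k)$ is the set of pairs $(\tau_1,k_1)$ with $|k_1|\sim N_1$, $|k-k_1|\sim N_2$, $\bra{\sigma_1}\sim L_1$, $\bra{\sigma_2}\sim L_2$ (here $\sigma_1=\tau_1-k_1^{2j+1}$ and $\sigma_2=(\tau-\tau_1)-(k-k_1)^{2j+1}$), weighed against the factors $L_i^{-b}$ that appear when the $L^2$ norms of the localized pieces are converted into $X^{0,b}$ norms. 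The $\tau_1$-window compatible with both modulation constraints has length $\lesssim\min(L_1,L_2)$, so the heart of the matter is the number of admissible $k_1$: a nonempty $R$ forces $\abs{\tau-k_1^{2j+1}-(k-k_1)^{2j+1}}\lesssim\max(L_1,L_2)$, and here Lemma~\ref{lem:algebra}(a) (with $x=k_1$, $y=k-k_1$, $z=-k$) gives $k_1^{2j+1}+(k-k_1)^{2j+1}=k^{2j+1}-k\,k_1(k-k_1)\,Q_3$ with $\abs{Q_3}\sim\max(|k_1|,|k-k_1|,|k|)^{2j-2}$. Thus the resonance function $k_1\mapsto k_1^{2j+1}+(k-k_1)^{2j+1}$ is a polynomial of degree $2j$ with a single nondegenerate critical point at $k_1=k/2$; away from that point its successive integer values are widely separated (roughly by $\max(N_1,N_2)^{2j}$ in the dispersive case $N_1\gg N_2$), which bounds the count by $\lesssim 1+\max(L_1,L_2)\max(N_1,N_2)^{-2j}$, and near the point the count is governed by the quadratic vanishing of the resonance function. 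Dually, the same factorization yields the non-resonance bound $\max\!\bigl(L_1,L_2,\bra{\tau-k^{2j+1}}\bigr)\gtrsim |k|\,|k_1|\,|k-k_1|\,\max(|k_1|,|k-k_1|,|k|)^{2j-2}$, which in the regime $N_1\gg N_2$ forces a modulation $\gtrsim N_1^{2j}N_2$ and hence supplies the power of $N_1$ needed to sum over the dyadic blocks.

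Assembling these pieces leaves a summation --- a Schur test, or Cauchy--Schwarz --- over $N_1\ge N_2$ and over the modulation parameters $L_1,L_2$. Keeping track of the exponents, the $\tau_1$-convolution of two $\bra{\cdot}^{-2b}$ weights contributes a decay $\bra{\cdot}^{1-4b}$, while the passage $\norm{\cdot}_{L^2}\to\norm{\cdot}_{X^{0,b}}$ costs $L_i^{-b}$; the value $b=\frac{j+1}{2(2j+1)}$ is exactly the one that makes every geometric series converge, the borderline identities being $1-4b=-\frac{1}{2j+1}$ and $b>\frac14$. The asserted consequence $\norm{u}_{L^4_{t,x}}\lesssim\norm{u}_{X^{0,1/3}}$ then follows from \eqref{eq:strichartz}, since $\frac{j+1}{2(2j+1)}\le\frac13$ for every $j\ge1$. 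I expect the main obstacle to be the high--high interaction $N_1\sim N_2$ with small output frequency $|k|$: there $k_1$ runs near the critical point $k/2$, the lattice count is weakest, and one must instead extract the needed gain from the modulation lower bound coming from Lemma~\ref{lem:algebra}(a); the purely resonant contribution with $k_1+k_2=0$ (zero output frequency) has to be treated by a separate argument. Closing the summation at the exact value of $b$, rather than merely $b>\frac13$, is the other delicate point.
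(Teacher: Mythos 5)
Note first that the paper does not give a proof of Lemma \ref{lem:strichartz}: it simply cites Bourgain (1993, 1995) and Tao and omits the argument, so your sketch is being compared with the classical proof rather than with anything in the text. Your scaffolding is the standard one and most of it is sound: the reduction $\norm{u}_{L^4_{t,x}}^2=\norm{u\ol{u}}_{L^2_{t,x}}$ to a bilinear estimate (legitimate here since the dispersion $k^{2j+1}$ is odd), Cauchy--Schwarz on the convolution, the $\tau_1$-window of length $\min(L_1,L_2)$, the count of admissible $k_1$ via the derivative of $h(k_1)=k_1^{2j+1}+(k-k_1)^{2j+1}$ and Lemma \ref{lem:algebra}(a), the bookkeeping $1-4b=-\tfrac{1}{2j+1}$, $b>\tfrac14$, the separate treatment of the zero output frequency, and the deduction of the $X^{0,\frac13}$ case from $\tfrac{j+1}{2(2j+1)}\le\tfrac13$.

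There are, however, two genuine gaps, located exactly where you anticipate trouble. First, the ``dual non-resonance bound'' $\max\bigl(L_1,L_2,\langle\tau-k^{2j+1}\rangle\bigr)\gtrsim |k|\,|k_1|\,|k-k_1|\,N_{\max}^{2j-2}$ cannot be exploited: in the bilinear estimate the output is measured in plain $L^2_{\tau,k}$, so $\tau$ is free and no weight sits on $\langle\tau-k^{2j+1}\rangle$; when the large term of the resonance identity falls on the output modulation you gain nothing, so neither the claimed ``forced modulation $\gtrsim N_1^{2j}N_2$'' nor the promised ``gain from the modulation lower bound'' in the high--high/near-critical regime is available. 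Second, and more seriously, the endpoint $b=\tfrac{j+1}{2(2j+1)}$ is not reachable by the sup-of-fiber-count Cauchy--Schwarz you describe: take $L_1\sim L_2\sim L$, output $|k|=1$, and $\tau$ near the critical value $h(k/2)$; then every $|k_1|\lesssim L^{1/(2j)}$ is admissible, the fiber has measure $\sim L^{1+\frac{1}{2j}}$, and Cauchy--Schwarz produces the factor $L^{\frac12+\frac{1}{4j}}$, which exceeds the needed $L^{2b}=L^{\frac12+\frac{1}{2(2j+1)}}$ by $L^{\frac{1}{4j(2j+1)}}$; hence the dyadic summation does not close at the stated $b$ by this method alone. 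Repairing it requires a further idea (Bourgain's argument uses that the set of outputs $(\tau,k)$ with a large fiber is itself small, instead of taking a supremum). Note that every application of the lemma in this paper only uses the embedding $X^{0,\frac13}\subset L^4_{t,x}$, and for $j\ge2$ one has $\tfrac{j+1}{2(2j+1)}<\tfrac13$ strictly, so a non-sharp variant of your argument (any exponent below $\tfrac13$) would suffice for the paper's purposes, though it would prove neither the lemma as stated nor the case $j=1$, where $\tfrac13$ is the endpoint.
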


\begin{proof}
This type estimate was first introduced by Bourgain \cite{Bourgain1993} associated to the Schr\"odinger and the KdV equations. Moreover, one can also find the comment for Lemma \ref{lem:strichartz} in \cite{Bourgain1995}. The proof of this lemma is almost similar as in \cite{Bourgain1993} and hence, we omit the detailed proof. We also refer \cite{Tao1} and \cite{Tao2} for the proof.
\end{proof}

From now on, let us consider the bi- and trilinear estimates which are the main results in this section. We already know the bilinear estimate proved by Hirayama in \cite{Hirayama} as follows:
\begin{proposition}[Hirayama \cite{Hirayama}]
Let $j \in \N$ and $s \ge -j/2$. Then, the following bilinear estimate holds:
\begin{equation}\label{eq:bilinear1}
\norm{\ft^{-1}[\bra{\tau - k^{2j+1}}^{-1}\wt{\px uv}]}_{Z^s} \lesssim \norm{u}_{Y^s}\norm{v}_{Y^s}.
\end{equation}
\end{proposition}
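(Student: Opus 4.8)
The plan is to prove the bilinear estimate \eqref{eq:bilinear1} by a standard Fourier restriction norm (Bourgain space) argument, dividing into regions according to the relative sizes of the modulations $\sigma_i := \bra{\tau_i - k_i^{2j+1}}$ and exploiting the algebraic gain from the resonance function supplied by Lemma \ref{lem:algebra}(a). Writing $w = \ft^{-1}[\bra{\tau-k^{2j+1}}^{-1}\wt{\px(uv)}]$, by duality it suffices to bound
\[
\sum_{k_1+k_2+k_3=0}\iint_{\tau_1+\tau_2+\tau_3=0} \frac{|k_3|\,\bra{k_3}^s}{\bra{\sigma_3}} \bra{k_1}^{-s}\bra{k_2}^{-s}\, \wh{f_1}(k_1,\tau_1)\wh{f_2}(k_2,\tau_2)\wh{g}(k_3,\tau_3),
\]
where $f_i$ correspond to $u,v$ with $Y^s$ norms replaced by $X^{s,1/2}$ pieces (the $\ell_k^2 L_\tau^1$ pieces are handled analogously, with the $L_\tau^1$ summation absorbed via Cauchy--Schwarz in $\tau$), against $\norm{g}_{L^2_{\tau,k}} \le 1$. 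On the set $k_1+k_2+k_3=0$ and $\tau_1+\tau_2+\tau_3=0$ the identity $\sigma_1+\sigma_2+\sigma_3 \;(\text{up to the linear symbol}) \;=\; k_1^{2j+1}+k_2^{2j+1}+k_3^{2j+1} = k_1k_2k_3\,Q_3$, with $|Q_3| \sim k_{max}^{2j-2}$, shows that the largest modulation $\sigma_{max}$ satisfies $\sigma_{max}\gtrsim |k_1k_2k_3|\,k_{max}^{2j-2}$ in the non-resonant regime.

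The key steps, in order: (i) reduce to dyadic pieces $|k_i|\sim N_i$, $\sigma_i\sim L_i$, and split into the case $N_{min}\sim N_{max}$ (all frequencies comparable, low-frequency-dominated sums) versus $N_{min}\ll N_{max}\sim N_{med}$; (ii) in each case use the resonance identity to pin down which $L_i$ is the largest, and place the factor with the largest modulation in $L^2$ directly while estimating the product of the other two factors (one of which carries the derivative $|k_3|$) in $L^4_{t,x}$ via the $L^4$-Strichartz estimate of Lemma \ref{lem:strichartz}, $\norm{v}_{L^4_{t,x}}\lesssim \norm{v}_{X^{0,(j+1)/(2(2j+1))}}$; (iii) carry out the resulting Schur-type summation over the dyadic parameters $N_i,L_i$, checking that the weight
\[
\frac{N_3^{1+s}\,N_1^{-s}N_2^{-s}}{L_3^{1/2}}\cdot \frac{1}{(N_1N_2N_3\,N_{max}^{2j-2})^{1/2-\theta}}\cdot L_{\text{(the two smaller)}}^{\theta}
\]
(with $\theta = (j+1)/(2(2j+1)) < 1/2$) sums to a constant precisely when $s\ge -j/2$; (iv) treat the endpoint $s=-j/2$ with extra care, since that is where the available power of $N_{max}$ from $Q_3$ is exactly consumed, and verify the low-modulation/low-frequency diagonal contributions (where no modulation gain is available) separately using $L^4$-Strichartz on two factors and $L^2$ on the third. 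Finally, (v) handle the $\norm{\bra{k}^s\bra{\tau-k^{2j+1}}^{-1}\wt{\,\cdot\,}}_{\ell_k^2 L_\tau^1}$ component of the $Z^s$ norm and the $\ell_k^2 L_\tau^1$ component of the $Y^s$ norms, which is routine once the $X^{s,b}$ estimate is in hand, using $\bra{\sigma_3}^{-1}\in L^{1+}_{\tau_3}$ and Cauchy--Schwarz.

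The main obstacle I expect is step (iii)--(iv): organizing the dyadic summation so that the endpoint $s=-j/2$ closes. Away from the endpoint there is room to spare, but at $s=-j/2$ one must use the full strength of $|Q_3|\sim N_{max}^{2j-2}$ together with the sharpest allowable modulation exponent in $L^4$-Strichartz, and one must be careful in the regime $N_{min}\ll N_{max}$ that the derivative loss $|k_3|$ falls on the right frequency (it is harmless when $N_3\sim N_{min}$ but requires the resonance gain when $N_3\sim N_{max}$). A secondary subtlety is the genuinely resonant set where $k_1k_2k_3 Q_3$ is small (e.g. two frequencies nearly opposite), on which there is no modulation gain at all; there one falls back on a direct $L^4\times L^4\times L^2$ or $L^6$-type estimate and checks that the $\ell^2_k$ structure of the $Y^s$ norms and the constraint $k_1+k_2+k_3=0$ still give summability. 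Since \eqref{eq:bilinear1} is quoted from Hirayama \cite{Hirayama}, in practice I would cite that proof for the $X^{s,b}$ part and only supply the short argument adapting the $\ell_k^2 L_\tau^1$ components to the $Y^s,Z^s$ framework.
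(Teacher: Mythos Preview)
The paper does not prove this proposition at all: it is stated with attribution to Hirayama \cite{Hirayama} and immediately followed by ``However, for our analysis, refined estimates of \eqref{eq:bilinear1} are needed,'' after which the paper moves on to Lemma \ref{lem:bilinear2}. Your final sentence---that in practice you would simply cite \cite{Hirayama} for \eqref{eq:bilinear1}---is therefore exactly what the paper does, and is the correct response here.

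The sketch you give in steps (i)--(v) is a reasonable summary of the standard Hirayama/Bourgain-space argument (dyadic decomposition, resonance identity from Lemma \ref{lem:algebra}(a), $L^4$-Strichartz on two factors, Schur summation, endpoint care at $s=-j/2$), and nothing in it is wrong as an outline. But since there is no proof in the paper to compare it to, the only relevant observation is that your proposal and the paper agree: this estimate is imported from \cite{Hirayama} rather than reproved.
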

However, for our analysis, refined estimates of \eqref{eq:bilinear1} are needed. The following lemma will be used to prove the global well-posedness.
\begin{lemma}\label{lem:bilinear2}
Let $j \in \N$ and $s \ge -j/2$. Let $u_i = P_{N_i}u$ and $|k_i|\sim N_i\ge1$, $i=1,2,3$. Then we have 
\begin{equation}\label{eq:bilinear2}
\norm{P_{N_3}\px(u_1u_2)}_{X^{s,-\frac12}} \lesssim (N_1N_2)^{-\frac12}N_3^{s+\frac12}N_{max}^{1-j}\norm{u_1}_{X^{0,\frac12}}\norm{u_2}_{X^{0,\frac12}}.
\end{equation}
\end{lemma}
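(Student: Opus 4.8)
The plan is to reduce the bilinear estimate \eqref{eq:bilinear2} to the $L^4$-Strichartz estimate of Lemma \ref{lem:strichartz} via a duality/Cauchy--Schwarz argument, gaining the factors $(N_1N_2)^{-1/2}$ and $N_{max}^{1-j}$ from the resonance identity of Lemma \ref{lem:algebra}(a). First I would dualize: writing the left-hand side as $\sup$ over $\norm{w}_{X^{-s,1/2}} \le 1$ of the trilinear form
\[
\Lambda(u_1,u_2,w) = \iint_{\T\times\R} P_{N_3}\px(u_1u_2)\cdot \ol{w}\,dx\,dt,
\]
the spatial derivative contributes $N_3$, and on the Fourier side the frequencies satisfy $k_1+k_2 = k_3$ (up to sign conventions), with $|k_i| \sim N_i$. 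Set $\sigma_i = \tau_i - k_i^{2j+1}$ for the three inputs. The crucial algebraic input is that on this frequency sheet
\[
\max(\abs{\sigma_1},\abs{\sigma_2},\abs{\sigma_3}) \gtrsim \abs{P_3(k_1,k_2,-k_3)} = \abs{k_1k_2k_3}\cdot\abs{Q_3(k_1,k_2,-k_3)} \sim N_1N_2N_3 N_{max}^{2j-2},
\]
by Lemma \ref{lem:algebra}(a). Hence one of the three modulation weights is $\gtrsim N_1N_2N_3N_{max}^{2j-2}$.

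Next I would split $\Lambda$ into three pieces according to which of $\abs{\sigma_1},\abs{\sigma_2},\abs{\sigma_3}$ is the largest. In each piece, pull out the large modulation weight: e.g. if $\abs{\sigma_3}$ dominates, then $\bra{\sigma_3}^{-1/2} \lesssim (N_1N_2N_3N_{max}^{2j-2})^{-1/2}$, which must be absorbed by the $X^{-s,1/2}$ norm of $w$; but since $w$ carries a $\bra{\sigma_3}^{1/2}$ we can afford to drop it only after this pointwise bound, leaving an $L^2$-type object. The standard move is then to estimate the resulting trilinear form by Hölder in $L^4_{t,x}\times L^4_{t,x}\times L^2_{t,x}$ (or $L^4\times L^4\times L^2$ distributed so that the two factors with a full $\bra{\sigma}^{1/2}$, or a fraction of it, go into $L^4$ via Lemma \ref{lem:strichartz}, and the remaining factor into $L^2$). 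Since Lemma \ref{lem:strichartz} only needs $X^{0,1/3}$, and we have a full $X^{0,1/2}$ available on the two $u_i$'s, there is room to spare — in the case where the max modulation sits on a $u_i$ rather than on $w$, we first use a fraction of that weight to run Strichartz and the remainder to harvest the gain. Collecting exponents: the derivative gives $N_3$, the large modulation gives $(N_1N_2N_3)^{-1/2}N_{max}^{1-j}$, and matching the $\bra{k}^{\pm s}$ weights of the $X^{s,-1/2}$ norm (on output) against the $X^{0,1/2}$ norms (on inputs, no weight) costs $N_3^{\,s}$, for a net $N_3 \cdot (N_1N_2N_3)^{-1/2} N_{max}^{1-j} N_3^{\,s} = (N_1N_2)^{-1/2}N_3^{s+1/2}N_{max}^{1-j}$, exactly \eqref{eq:bilinear2}. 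I would also need to check that the $\ell^2_k L^1_\tau$ component hidden in $Y^s$/$Z^s$ does not appear here since the statement is purely in $X^{s,\pm 1/2}$ norms, so this is clean.

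The main obstacle I anticipate is the \textbf{high-low interaction} case, say $N_{max} = N_1 \sim N_3 \gg N_2$ (output frequency comparable to the large input): here $Q_3$ is still of size $N_{max}^{2j-2}$ by Lemma \ref{lem:algebra}(a) so the modulation gain persists, but one must be careful that the gain $N_{max}^{1-j}$ is genuinely available and not eaten by the derivative loss $N_3 \sim N_{max}$; tracking it shows $N_3 (N_1N_2N_3)^{-1/2}N_{max}^{1-j} = N_2^{-1/2}N_{max}^{-j}N_3^{1/2}\cdot N_3^{1/2}\cdots$ — the bookkeeping is delicate and this is where the factor-splitting of the dominant modulation between ``run Strichartz'' and ``extract the gain'' has to be done quantitatively, using that $j \ge 1$ leaves $2j-2 \ge 0$ powers to distribute. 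A secondary subtlety is the genuinely resonant set where $Q_3$ could a priori be small; but Lemma \ref{lem:algebra}(a) asserts $\abs{Q_3} \sim N_{max}^{2j-2}$ with no exceptional locus once $x+y+z=0$, so there is no resonant obstruction — the estimate is driven entirely by the robust lower bound on the modulation, and the proof is essentially a careful Hölder/Strichartz accounting once that is in hand.
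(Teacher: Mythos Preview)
Your proposal is correct and follows essentially the same approach as the paper: dualize, exploit the resonance lower bound $\max_i|\sigma_i|\gtrsim |k_1k_2k_3|N_{max}^{2j-2}$ from Lemma~\ref{lem:algebra}(a), and close with H\"older $L^4\times L^4\times L^2$ together with the Strichartz embedding $X^{0,1/3}\hookrightarrow L^4_{t,x}$. One small simplification: your worry about ``factor-splitting'' when the maximal modulation sits on an input $u_i$ is unnecessary --- in that case you place that factor directly in $L^2_{t,x}$ (its full $\bra{\sigma_i}^{1/2}$ weight has already been spent on the gain, and $L^2$ needs no modulation), and put the remaining two factors in $L^4$ via Strichartz; the paper handles all three cases at once by a symmetric ``without loss of generality'' reduction.
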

\begin{proof}
Due to the total derivative in the left-hand side of \eqref{eq:bilinear2}, we may assume that $k_3 \neq 0$. Let $\lambda_i = \tau_i -k_i^{2j+1}$, $i=1,2,3$. Then, from the definition of $X^{s,b}$-norm, we can reduce \eqref{eq:bilinear2} by  
\begin{equation}\label{eq:equiv}
\begin{aligned}
&\normo{\sum_{\substack{k_1,k_2\in\Z^{\ast}\\k_1+k_2=k_3}}\int_{\tau_1 + \tau_2 = \tau_3}\frac{|k_3|\bra{k_3}^s|k_3|^{-s-\frac12}|k_1k_2|^{\frac12}N_{max}^{j-1}}{\prod_{i=1}^{3}\bra{\lambda_i}^{\frac12}}\wt{u}_1(k_1,\tau_1)\wt{u}_2(k_2,\tau_2)\: d\tau_1}_{\ell_{k_3}^2L_{\tau_3}^2}\\
&\hspace{28em}\lesssim \norm{u_1}_{L_{t,x}^2}\norm{u_2}_{L_{t,x}^2}.
\end{aligned}
\end{equation}
Without loss of generality, we assume $|\lambda_1|\le|\lambda_2|\le|\lambda_3|$. Then, from the identity
\[\lambda_1 + \lambda_2 = \lambda_3 - P_3(k_1,k_2,-k_3),\]
where $P_3$ is defined as in \eqref{eq:algebra1}, and Lemma \ref{lem:algebra} (a), we have $|\lambda_3| \gtrsim |k_1k_2k_3|\max(|k_1|,|k_2|,|k_3|)^{2j-2}$. By using this and duality, the left-hand side of \eqref{eq:equiv} is dominated by
\begin{equation}\label{eq:integral}
\int_{\R \times \T} u_3\ft^{-1}[\bra{\tau_1-k_1^{2j+1}}^{-\frac12}\wt{u}_1]\ft^{-1}[\bra{\tau_2-k_2^{2j+1}}^{-\frac12}\wt{u}_2]\:dxdt,
\end{equation}
where $\norm{u_3}_{L_{t,x}^2} \le 1$. We apply the H\"older inequality and Lemma \ref{lem:strichartz} ($X^{0,\frac13} \subset L_{t,x}^4$) to \eqref{eq:integral}, then \eqref{eq:integral} is bounded by
\begin{align*}
&\norm{u_3}_{L_{t,x}^2}\norm{\ft^{-1}[\bra{\tau_1-k_1^{2j+1}}^{-\frac12}\wt{u}_1]}_{L_{t,x}^4}\norm{\ft^{-1}[\bra{\tau_2-k_2^{2j+1}}^{-\frac12}\wt{u}_2}_{L_{t,x}^4} \\
\lesssim& \norm{u_3}_{L_{t,x}^2}\norm{u_1}_{X^{0,-\frac16}}\norm{u_2}_{X^{0,-\frac16}}\\
\lesssim& \norm{u_1}_{L_{t,x}^2}\norm{u_2}_{L_{t,x}^2},
\end{align*}
and this completes the proof.
\end{proof}

The following lemma will be used to obtain the nonsqueezing property.
\begin{lemma}
Let $j \in \N$. Let $u_i = P_{N_i}u$ and $|k_i|\sim N_i \ge 1$, $i=1,2,3$. Then we have 
\begin{equation}\label{eq:bilinear3}
\norm{P_{N_3}\px(u_1u_2)}_{Z^{-\frac12}} \lesssim N_{max}^{1-j}\norm{u_1}_{Y^{-\frac12}}\norm{u_2}_{Y^{-\frac12}}
\end{equation}
\end{lemma}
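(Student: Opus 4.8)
The plan is to mimic the proof of Lemma \ref{lem:bilinear2}, but now keeping track of the two auxiliary $\ell_k^2 L_\tau^1$ pieces built into the $Y^{-1/2}$ and $Z^{-1/2}$ norms, and keeping the gain $N_{max}^{1-j}$ rather than absorbing part of it. First I would reduce to the Fourier-side estimate. Writing $\lambda_i = \tau_i - k_i^{2j+1}$ and using $k_1 + k_2 = k_3$, $\tau_1 + \tau_2 = \tau_3$, the left-hand side of \eqref{eq:bilinear3} splits into two contributions: the $X^{-1/2,-1/2}$ part and the $\ell_{k_3}^2 L_{\tau_3}^1$ part with weight $\bra{\lambda_3}^{-1}$. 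On the input side, each $\norm{u_i}_{Y^{-1/2}}$ controls both $\norm{u_i}_{X^{-1/2,1/2}}$ and $\norm{\bra{k_i}^{-1/2}\wt{u}_i}_{\ell_{k_i}^2 L_{\tau_i}^1}$. As in the previous lemma, I would normalize by absorbing the Sobolev weights: the symbol is $|k_3|\bra{k_3}^{-1/2} \cdot \bra{k_3}^{-1/2}\bra{\lambda_3}^{-1/2}$ (or $\bra{\lambda_3}^{-1}$ for the $L^1$ piece) against $\bra{k_1}^{-1/2}\bra{k_2}^{-1/2}$, with an overall $N_{max}^{j-1}$ that we want to keep on the right.

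The key algebraic input is again $\lambda_1 + \lambda_2 - \lambda_3 = -P_3(k_1,k_2,-k_3)$ together with Lemma \ref{lem:algebra}(a), which gives $|P_3(k_1,k_2,-k_3)| \sim |k_1 k_2 k_3| N_{max}^{2j-2}$. Hence the maximum modulation satisfies $|\lambda_{max}| \gtrsim |k_1 k_2 k_3| N_{max}^{2j-2}$. I would then do a case analysis according to which $\lambda_i$ is largest:

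\begin{itemize}
\item If $|\lambda_3| = |\lambda_{max}|$: then $\bra{\lambda_3}^{-1/2} \lesssim |k_1 k_2 k_3|^{-1/2} N_{max}^{1-j}$. For the $X^{-1/2,-1/2}$ output, combine the weight $|k_3|\bra{k_3}^{-1}$ with this to see that the $N_{max}^{j-1}$ is cancelled and one is left with $|k_3|^{1/2}\bra{k_3}^{-1}(|k_1 k_2|)^{-1/2}\bra{k_1}^{-1/2}\bra{k_2}^{-1/2}$ times the Strichartz pairing; since $|k_i|\sim N_i \ge 1$ all the leftover weights are bounded, and two applications of $X^{0,1/3}\hookrightarrow L_{t,x}^4$ (Lemma \ref{lem:strichartz}) on $u_1, u_2$ against $u_3 \in L_{t,x}^2$ close the bound — exactly as in \eqref{eq:integral}, with the extra $N_{max}^{1-j}$ surviving. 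For the $\ell_{k_3}^2 L_{\tau_3}^1$ output we have the stronger weight $\bra{\lambda_3}^{-1}$; use $\bra{\lambda_3}^{-1} \le \bra{\lambda_3}^{-1/2} \cdot \bra{\lambda_3}^{-1/2}$, spend one half as above and use the other half to perform the $\tau_3$-integration via Cauchy--Schwarz ($\norm{\cdot}_{L_{\tau_3}^1} \lesssim \norm{\bra{\lambda_3}^{1/2+}\cdot}_{L_{\tau_3}^2}$, i.e. $\int \bra{\lambda_3}^{-1}\,d\tau_3 < \infty$), reducing to the same $L_{t,x}^2 \times L_{t,x}^4 \times L_{t,x}^4$ estimate.
\item If $|\lambda_1| = |\lambda_{max}|$ or $|\lambda_2| = |\lambda_{max}|$: say $|\lambda_1|$ is largest. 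Then $u_1$ carries the high modulation. Here I would use the $\ell_{k_1}^2 L_{\tau_1}^1$ component of $\norm{u_1}_{Y^{-1/2}}$: bounding $\wt{u}_1$ in $L_{\tau_1}^1$ lets us treat $\ft^{-1}[\wt{u}_1]$ essentially as a free solution (an $L_t^\infty L_x^2$ or $L_{t,x}^\infty$-type object), while the remaining factors $u_2 \in L_{t,x}^4$ (Strichartz) and the dual $u_3$ are handled by Hölder. The gain $|\lambda_1|^{-1/2} \gtrsim |k_1 k_2 k_3|^{-1/2} N_{max}^{1-j}$ — which now lives on the output side as the $\bra{\lambda_3}^{-1/2}$ or $\bra{\lambda_3}^{-1}$ factor can be discarded (bounded by $1$) since we instead extract the decay from $\lambda_1$ — again cancels $N_{max}^{j-1}$ and leaves the desired $N_{max}^{1-j}$. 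The $\ell_{k_3}^2 L_{\tau_3}^1$ output in this subcase needs the extra integrability in $\tau_3$, which comes for free because $\tau_3 = \tau_1 + \tau_2$ and we have a genuine $L_{\tau_1}^1$ bound on $u_1$ plus an $L_{\tau_2}^2$-with-weight bound on $u_2$, so convolution in $\tau$ lands in $L_{\tau_3}^1$ after summing the $\ell^2_{k_1}$, $\ell^2_{k_2}$ in $k$.
\end{itemize}

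The main obstacle I anticipate is \emph{the $\ell_k^2 L_\tau^1$ (low-regularity $L^1$-in-time) components of the $Y^{-1/2}$ and $Z^{-1/2}$ norms}: unlike the clean $X^{s,b}$-only estimate of Lemma \ref{lem:bilinear2}, here one must verify that in every modulation case there is enough $\tau$-decay left over after extracting the $N_{max}^{1-j}$ gain to perform the $L^1$-in-$\tau$ integration (equivalently, that one never needs to spend \emph{more} than $\bra{\lambda}^{-1/2-}$ worth of modulation, so that a residual $\bra{\lambda}^{-1/2-}$ remains integrable). When the high modulation sits on an \emph{input}, one instead exploits that input's $\ell_k^2 L_\tau^1$ norm directly and the bookkeeping is different; the delicate point is the case where the output frequency $k_3$ is comparable to $N_{max}$ while $|\lambda_3|$ is small, since then the required decay must be found entirely among $\lambda_1, \lambda_2$ and one has to be careful that the resonance identity still forces one of them to be $\gtrsim |k_1k_2k_3| N_{max}^{2j-2}$. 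All of these are routine once the case division is set up; no new algebraic identity beyond Lemma \ref{lem:algebra}(a) and no Strichartz estimate beyond Lemma \ref{lem:strichartz} is needed.
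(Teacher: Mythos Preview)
Your overall architecture is right, and the $X^{-\frac12,-\frac12}$ piece does follow immediately from Lemma~\ref{lem:bilinear2} as you say. But there is a genuine endpoint gap in your treatment of the $\ell_{k_3}^2 L_{\tau_3}^1$ output when $|\lambda_3|=|\lambda_{\max}|$.

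You propose to split $\bra{\lambda_3}^{-1}=\bra{\lambda_3}^{-1/2}\cdot\bra{\lambda_3}^{-1/2}$, use one factor to extract $N_{\max}^{1-j}|k_1k_2k_3|^{-1/2}$, and use the other to pass from $L^1_{\tau_3}$ to $L^2_{\tau_3}$ by Cauchy--Schwarz. The parenthetical ``i.e.\ $\int\bra{\lambda_3}^{-1}\,d\tau_3<\infty$'' is exactly the problem: that integral diverges logarithmically, and equivalently $\bra{\lambda_3}^{-1/2}$ is \emph{not} in $L^2_{\tau_3}$. You yourself note that Cauchy--Schwarz requires $\bra{\lambda_3}^{-1/2-}$, but after spending a full $\bra{\lambda_3}^{-1/2}$ on the resonance gain you have no epsilon left on the output side. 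The slack in Strichartz ($X^{0,1/3}\hookrightarrow L^4$ uses only $1/3$, not $1/2$) lives on the \emph{input} weights $\bra{\lambda_1},\bra{\lambda_2}$, which in this case can both be $O(1)$, so it does not help directly.

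The paper closes this by a secondary split inside the case $|\lambda_3|=|\lambda_{\max}|$: either $\bra{\lambda_1}\gtrsim |P_3|^{1/100}$ (or symmetrically $\bra{\lambda_2}$), in which case the input slack $\bra{\lambda_1}^{-1/6}$ can be traded for the missing $\bra{\lambda_3}^{-1/600}$ via $\bra{\lambda_1}^{1/6}\gtrsim |P_3|^{1/600}$ and $\bra{\lambda_3}\gtrsim |P_3|$; or $\bra{\lambda_1},\bra{\lambda_2}\ll |P_3|^{1/100}$, in which case the $\tau_1,\tau_2$ integrations are over short intervals and a direct Cauchy--Schwarz plus $\ell^2_k$--summation (using $|P_3|\gtrsim |k_3|^{2j}$) closes the bound without Strichartz. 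You would need one of these mechanisms, or something equivalent, to make the argument go through.

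A smaller point: in the case $|\lambda_1|=|\lambda_{\max}|$ you say you will use the $\ell^2_k L^1_\tau$ component of $\|u_1\|_{Y^{-1/2}}$ and also extract $|\lambda_1|^{-1/2}\lesssim N_{\max}^{1-j}|k_1k_2k_3|^{-1/2}$. These are incompatible: the $\ell^2_k L^1_\tau$ norm carries no $\bra{\lambda_1}$ weight, so no decay can be extracted from it. What actually works (and what the paper does) is to stay with the $X^{-1/2,1/2}$ part of $\|u_1\|_{Y^{-1/2}}$, absorb $\bra{\lambda_1}^{-1/2}$ into the symbol to get $N_{\max}^{1-j}$, and then note that the full $\bra{\lambda_3}^{-1}$ is still available on the output, so $\bra{\lambda_3}^{-2/3}\in L^2_{\tau_3}$ handles Cauchy--Schwarz with room to spare. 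This case is in fact easier than the $|\lambda_3|$--max case, not harder.
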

\begin{remark}
Thanks to the frequency decay bound $N_{max}^{1-j}$, $j > 1$, one can easily obtain an error bound in the proof of Lemma \ref{prop:approx1}, and this guarantees the approximation of the higher-order KdV flow without the Miura transform. 
\end{remark}
\begin{proof}
We also assume $k_3 \neq 0$ due to the same reason in the proof of Lemma \ref{lem:bilinear2}.

We first control the $\ell_k^1L_{\tau}^2$ part. From the definition of function spaces, it suffices to show that
\begin{equation}\label{eq:bi1}
\normo{\sum_{\substack{k_1,k_2 \in \Z^{\ast}\\k_1+k_2=k_3}}\int_{\tau_1 + \tau_2 = \tau_3}\frac{|k_1k_2k_3|^{\frac12}\wt{u}_1(k_1,\tau_1)\wt{u}_2(k_2,\tau_2)}{\bra{\lambda_3}\bra{\lambda_1}^{\frac12}\bra{\lambda_2}^{\frac12}}\: d\tau_1}_{\ell_{k_3}^2L_{\tau_3}^1} \lesssim N_{max}^{1-j}\norm{u_1}_{X^{0,0}}\norm{u_2}_{X^{0,0}},
\end{equation}
where $\lambda_i$ is defined in the proof of Lemma \ref{lem:bilinear2}. From \eqref{eq:algebra1}, we have $\max(|\lambda_1|,|\lambda_2|,|\lambda_3|) \gtrsim |k_1k_2k_3|\max(|k_1|,|k_2|,|k_3|)^{2j-2}$. If $|\lambda_1|=\max(|\lambda_1|,|\lambda_2|,|\lambda_3|)$, \eqref{eq:bi1} is reduced by
\begin{equation}\label{eq:bi2}
\normo{\sum_{\substack{k_1,k_2 \in \Z^{\ast}\\k_1+k_2=k_3}}\int_{\tau_1 + \tau_2 = \tau_3}\bra{\lambda_3}^{-1}\wt{u}_1(k_1,\tau_1)\bra{\lambda_2}^{-\frac12}\wt{u}_2(k_2,\tau_2)\: d\tau_1}_{\ell_{k_3}^2L_{\tau_3}^1} \lesssim \norm{u_1}_{L_{t,x}^2}\norm{u_2}_{L_{t,x}^2}.
\end{equation}
Since $\bra{\lambda_3}^{-\frac23}$ is $L^2$-integrable, by the Cauchy-Schwarz inequality with respect to $\tau_3$, the left-hand side of \eqref{eq:bi2} is bounded by
\begin{equation*}
\normo{\sum_{\substack{k_1,k_2 \in \Z^{\ast}\\k_1+k_2=k_3}}\int_{\tau_1 + \tau_2 = \tau_3}\bra{\lambda_3}^{-\frac13}\wt{u}_1(k_1,\tau_1)\bra{\lambda_2}^{-\frac12}\wt{u}_2(k_2,\tau_2)\: d\tau_1}_{\ell_{k_3}^2L_{\tau_3}^2}.
\end{equation*}
Then, by duality and $X^{0,\frac13} \subset L_{t,x}^4$, we can obtain
\begin{align*}
\int_{\R \times \T} \ft^{-1}[\bra{\lambda_3}^{-\frac13}\wt{u}_3]&u_1\ft^{-1}[\bra{\lambda_2}^{-\frac12}\wt{u}_2]\: dxdt \\
&\lesssim \norm{\ft^{-1}[\bra{\lambda_3}^{-\frac13}\wt{u}_3]}_{L_{t,x}^4}\norm{u_1}_{L_{t,x}^2}\norm{\ft^{-1}[\bra{\lambda_2}^{-\frac12}\wt{u}_2]}_{L_{t,x}^4} \\
&\lesssim \norm{u_3}_{X^{0,0}}\norm{u_1}_{X^{0,0}}\norm{u_2}_{X^{0,-\frac16}},
\end{align*}
where $\norm{u_3}_{L_{t,x}^2} \le 1$. The last term implies the right-hand side of \eqref{eq:bi2}. By symmetry, we do not need to consider $|\lambda_2| = \max(|\lambda_1|,|\lambda_2|,|\lambda_3|)$. Next, we consider the case when $|\lambda_3| = \max(|\lambda_1|,|\lambda_2|,|\lambda_3|)$. If $\bra{\lambda_1} \gtrsim |P_3(k_1,k_2,-k_3)|^{\frac{1}{100}}$, we can reduce the left-hand side of \eqref{eq:bi1} as
\[\normo{\sum_{\substack{k_1,k_2 \in \Z^{\ast}\\k_1+k_2=k_3}}\int_{\tau_1 + \tau_2 = \tau_3}\bra{\lambda_3}^{-\frac12-\frac{1}{600}}N_{max}^{1-j}\bra{\lambda_1}^{-\frac13}\wt{u}_1(k_1,\tau_1)\bra{\lambda_2}^{-\frac12}\wt{u}_2(k_2,\tau_2)\: d\tau_1}_{\ell_{k_3}^2L_{\tau_3}^1}.\]
Since $\bra{\lambda_3}^{-\frac12-\frac{1}{600}}$ is $L^2$-integrable, using the Cauchy-Schwarz inequality with respect to $\tau_3$, duality, the H\"older inequality and $X^{0,\frac13} \subset L_{t,x}^4$, we have the \eqref{eq:bi1}. Finally, we assume $\bra{\lambda_i} \ll |P_3(k_1,k_2,-k_3)|^{\frac{1}{100}}$ for $i=1,2$. Then the left-hand side of \eqref{eq:bi1} is bounded by
\[\normo{\sum_{\substack{k_1,k_2 \in \Z^{\ast}\\k_1+k_2=k_3}} |P_3|^{-\frac12}N_{max}^{1-j}\int_{\bra{\lambda_1} \ll |P_3|^{\frac{1}{100}}}\int_{\bra{\lambda_2} \ll |P_3|^{\frac{1}{100}}}\wt{u}_1(k_1,\tau_1)\wt{u}_2(k_2,\tau_2)\:d\tau_1d\tau_2}_{\ell_{k_3}^2}.\]
Applying the Cauchy-Schwarz inequality in $\tau_1$ and $\tau_2$ separately, above norm is dominated by
\[\normo{\sum_{\substack{k_1,k_2 \in \Z^{\ast}\\k_1+k_2=k_3}} |P_3|^{-\frac12+\frac{1}{100}}N_{max}^{1-j}F_1(k_1)F_2(k_2)}_{\ell_{k_3}^2},\]
where 
\[F_i(k_i) = \norm{\wt{u}_i(\tau_i)}_{L_{\tau_i}^2}.\]
From \eqref{eq:algebra1}, we have $|P_3| \gtrsim |k_i|^{2j}$ for $i=1,2,3$, which implies
\[|P_3|^{-\frac12+\frac{1}{100}} \lesssim |k_3|^{-j+\frac{j}{50}},\]
and this should be $\ell^2$-summable in $k_3$. Hence, we finally have
\[\normo{\sum_{k_1 \in \Z^{\ast}}N_{max}^{1-j}F_1(k_1)F_2(k_3-k_1)}_{\ell_{k_3}^{\infty}},\]
this is bounded from the Cauchy-Schwarz inequality in $k_1$ that
\[N_{max}^{1-j}\norm{F_1}_{\ell_{k_1}^2}\norm{F_2}_{\ell_{k_2}^2} = N_{max}^{1-j}\norm{u_1}_{X^{0,0}}\norm{u_2}_{X^{0,0}}.\]
For the $X^{-\frac12,-\frac12}$ part, it follows directly from Lemma \ref{lem:bilinear2}, when $s=-\frac12$. Hence we complete the proof.
\end{proof} 
The following trilinear estimates will be also helpful to prove the global well-posedness.

\begin{lemma}\label{lem:trilinear}
Let $j \in \N$ and $-j/2 \le s < 0$. Let $u_i = P_{N_i}u$ and $|k_i|\sim N_i \ge 1$, $i=1,2,3$. Suppose that $k=k_1+k_2+k_3$, $|k_1|\ge|k_2|\ge|k_3|$ and $P_4(k_1,k_2,k_3,-k) \neq 0$, where $P_4$ is defined as in Lemma \ref{lem:algebra}. 

{\rm{(a)}} If $|k|\sim|k_1|$, then
\begin{equation}\label{eq:trilinear1}
\norm{u_1u_2u_3}_{X^{-s,\frac12}} \lesssim N_1^{-s+j}N_2^{\frac12}N_3^{\frac12}\norm{u_1}_{Y^0}\norm{u_2}_{Y^0}\norm{u_3}_{Y^0}
\end{equation}

{\rm{(b)}} If $|k_1|\sim|k_2|$ and $j \ge 2$, then
\begin{equation}\label{eq:trilinear2}
\norm{u_1u_2u_3}_{X^{-s-j,\frac12}} \lesssim N_1^jN_3\norm{u_1}_{Y^0}\norm{u_2}_{Y^0}\norm{u_3}_{Y^0}, \hspace{2em} \mbox{for} \hspace{1em} |k_3| \ge |k|,
\end{equation}
or
\begin{equation}\label{eq:trilinear3}
\norm{u_1u_2u_3}_{X^{-s-j-\frac12,\frac12}} \lesssim N_1^jN_3^{\frac12}\norm{u_1}_{Y^0}\norm{u_2}_{Y^0}\norm{u_3}_{Y^0}, \hspace{2em} \mbox{for}  \hspace{1em} |k| \ge |k_3|.
\end{equation}
\end{lemma}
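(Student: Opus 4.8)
The plan is to estimate the relevant $X^{a,\frac12}$-norm (with $a=-s$ in (a) and $a=-s-j$ or $a=-s-j-\frac12$ in (b)) directly from its definition, reducing matters to $L_{t,x}^2$-bounds for products of three dyadically localized factors, and then to invoke the $L^4$-Strichartz estimate (Lemma \ref{lem:strichartz}), Bernstein's inequality and the factorization of $P_4$ from Lemma \ref{lem:algebra}(b). Set $\lambda_i := \tau_i - k_i^{2j+1}$ and $\lambda := \tau - k^{2j+1}$. On the convolution set $k = k_1+k_2+k_3$, $\tau = \tau_1+\tau_2+\tau_3$, one has the resonance identity $\lambda = \lambda_1 + \lambda_2 + \lambda_3 + P_4(k_1,k_2,k_3,-k)$, hence $\bra{\lambda}^{\frac12} \lesssim \bra{\lambda_1}^{\frac12} + \bra{\lambda_2}^{\frac12} + \bra{\lambda_3}^{\frac12} + \bra{P_4}^{\frac12}$. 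Writing $\norm{u_1u_2u_3}_{X^{a,\frac12}} = \norm{\bra{k}^a\bra{\lambda}^{\frac12}\wt{u_1u_2u_3}}_{L^2_\tau\ell^2_k}$ and inserting this bound pointwise inside the convolution, the norm splits into four contributions.

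For $i\in\{1,2,3\}$, the $\bra{\lambda_i}^{\frac12}$-contribution is dominated by $\norm{\bra{k}^a\,\wt{U_i u_j u_l}}_{L^2_\tau\ell^2_k}$, where $\{i,j,l\}=\{1,2,3\}$ and $U_i := \ft^{-1}[\bra{\lambda_i}^{\frac12}|\wt{u_i}|]$ satisfies $\norm{U_i}_{L_{t,x}^2} = \norm{u_i}_{X^{0,\frac12}} \le \norm{u_i}_{Y^0}$. For the $\bra{P_4}^{\frac12}$-contribution I would use Lemma \ref{lem:algebra}(b): since $P_4(k_1,k_2,k_3,-k)\neq0$, it equals, up to sign, $(k_1+k_2)(k_1+k_3)(k_2+k_3)\,Q_4$ with all three linear factors nonzero integers and $|Q_4|\sim N_1^{2j-2}$ (as $\max(N_1,N_2,N_3,|k|)\sim N_1$ in both (a) and (b)), so that $\bra{P_4}\sim |k_1+k_2|\,|k_1+k_3|\,|k_2+k_3|\,N_1^{2j-2}$; bounding each linear factor by the appropriate $N_m$ or $\bra{k}$ then lets me pull an explicit monomial out of the sum.

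Each contribution is now of the form (explicit frequency monomial) $\times\norm{v_1v_2v_3}_{L_{t,x}^2}$, with each $v_m$ equal to $u_m$ or to the single $U_i$. I would bound the $L_{t,x}^2$-norm by Hölder, using two facts: first, $u_m\in Y^0$ embeds into $L_{t,x}^4$ by Lemma \ref{lem:strichartz} (since $X^{0,\frac13}\subset L_{t,x}^4$); second, the $\ell^2_kL^1_\tau$-component of the $Y^0$-norm gives $\norm{u_m}_{L_t^\infty L_x^2}\lesssim\norm{u_m}_{Y^0}$, which combined with Bernstein's inequality yields $\norm{P_N u_m}_{L_{t,x}^\infty}\lesssim N^{\frac12}\norm{u_m}_{Y^0}$. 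For the $\bra{P_4}^{\frac12}$-contribution I place the lowest-frequency factor $u_3$ in $L_{t,x}^\infty$ and the other two in $L_{t,x}^4$, gaining $N_3^{\frac12}$; for a $\bra{\lambda_i}^{\frac12}$-contribution I place $U_i$ in $L_{t,x}^2$ and the two unmodified factors in $L_{t,x}^\infty$, gaining $N_3^{\frac12}$ if $3\in\{j,l\}$ and $N_1$ if $i=3$. A product of three $L^2$-functions only closes in $L^2$ once two of them are moved to $L^\infty$ or one absorbs a $\bra{\lambda}^{\frac12}$-weight, which is exactly this bookkeeping.

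It remains to collect powers. In (a) one has $|k|\sim N_1$, so $\bra{k}^{-s}\sim N_1^{-s}$ and $\bra{P_4}^{\frac12}\lesssim N_1^{j}N_2^{\frac12}$, and the $\bra{P_4}^{\frac12}$-contribution produces exactly $N_1^{-s}\cdot N_1^jN_2^{\frac12}\cdot N_3^{\frac12}$, matching the claim; the $\bra{\lambda_i}^{\frac12}$-contributions are no worse. In (b) one has $|k_1+k_2|=|k-k_3|$, and the two sub-cases $|k_3|\ge|k|$ and $|k|\ge|k_3|$ give $\bra{P_4}^{\frac12}\lesssim N_3^{\frac12}N_1^{j}$ and $\bra{P_4}^{\frac12}\lesssim\bra{k}^{\frac12}N_1^{j}$ respectively; bounding $\bra{k}^{a}$ by $1$ (resp.\ by $N_3^{a}$, since $|k|\ge N_3$ and $a<0$) and using $-s\le j/2$, one checks each contribution against $N_1^jN_3$, resp.\ $N_1^jN_3^{\frac12}$. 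The main obstacle is precisely this last step: reading off, from the relative sizes of $N_1,N_2,N_3,|k|$, which of $k_1+k_2$, $k_1+k_3$, $k_2+k_3$ may be small — hence the sharp bound for $\bra{P_4}$ from Lemma \ref{lem:algebra}(b) — and verifying that no residual power of $N_1,N_2,N_3$ exceeds that on the right-hand side; the hypotheses $|k_1|\ge|k_2|\ge|k_3|$, $P_4\ne0$ and (in (b)) $j\ge2$ are what make the extremal configurations close.
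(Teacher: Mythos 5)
Your proposal is correct and follows essentially the same route as the paper: the $\bra{P_4}^{1/2}$-piece is the paper's high-modulation case (factorization from Lemma \ref{lem:algebra}(b), then $L^4\times L^4\times L^\infty$ with Lemma \ref{lem:strichartz} and Bernstein), and your $\bra{\lambda_i}^{1/2}$-pieces with $L^2\times L^\infty\times L^\infty$ H\"older are the physical-space counterpart of the paper's Fourier-side Young/Cauchy--Schwarz treatment of the case $\bra{\tau-k^{2j+1}}\lesssim\bra{\tau_i-k_i^{2j+1}}$. The only cosmetic deviations are the pointwise splitting of $\bra{\lambda}^{1/2}$ via the resonance identity instead of a case analysis on the dominant modulation, and, in part (b), bounding $\bra{k}^{-s-j}\lesssim 1$ rather than exploiting its $\ell^2_k$-summability, which costs a harmless extra factor $N_1^{1/2}$ absorbed by $N_1^{j}$ since $j\ge 2$.
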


\begin{proof}
From the Plancherel theorem, we have the identity
\[\norm{u_1u_2u_3}_{X^{-s,\frac12}} = \normo{\bra{k}^{-s}\bra{\tau-k^{2j+1}}^{\frac12}\sum_{\substack{k_1,k_2,k_3 \in \Z^{\ast}\\k_1+k_2+k_3=k}}\int_{\tau_1 + \tau_2 + \tau_3 = \tau}\prod_{i=1}^3\wt{u}_i(\tau_i,k_i)\: d\tau_1d\tau_2}_{\ell_k^2L_{\tau}^2}.\]

{\rm{(a)}} We first consider $\bra{\tau - k^{2j+1}} \lesssim \bra{\tau_i - k_i^{2j+1}}$ for some $i=1,2,3$. We may assume that $\bra{\tau - k^{2j+1}} \lesssim \bra{\tau_1 - k_1^{2j+1}}$. Then, \eqref{eq:trilinear1} is restricted by
\begin{equation}\label{eq:tril1}
\norm{u_1u_2u_3}_{X^{-s,0}} \lesssim N_1^{-s+j}N_2^{\frac12}N_3^{\frac12} \norm{u_1}_{X^{0,0}}\norm{u_2}_{Y^0}\norm{u_3}_{Y^0}.
\end{equation}
Since $|k|\sim |k_1|$, from the Young's and the Cauchy-Schwarz inequalities, we have
\begin{align*}
\mbox{LHS of } \eqref{eq:tril1} &\sim N_1^{-s}\normo{\sum_{\substack{k_1,k_2,k_3 \in \Z^{\ast}\\k_1+k_2+k_3=k}}\int_{\tau_1 + \tau_2 + \tau_3 = \tau}\prod_{i=1}^3\wt{u}_i(\tau_i,k_i)\: d\tau_1d\tau_2}_{\ell_k^2L_{\tau}^2}\\
&\lesssim N_1^{-s}\norm{\wt{u}_1}_{\ell_k^2L_{\tau}^2}\norm{\wt{u}_2}_{\ell_k^1L_{\tau}^1}\norm{\wt{u}_3}_{\ell_k^1L_{\tau}^1}\\
&\lesssim N_1^{-s}N_2^{\frac12}N_3^{\frac12}\norm{u_1}_{X^{0,0}}\norm{\wt{u}_2}_{\ell_k^2L_{\tau}^1}\norm{\wt{u}_3}_{\ell_k^2L_{\tau}^1},
\end{align*}
the last term implies the right-hand side of \eqref{eq:trilinear1}.

Now, we consider $\bra{\tau - k^{2j+1}} \gg \bra{\tau_i - k_i^{2j+1}}$ for all $i=1,2,3$. Then, from \eqref{eq:algebra2}, we have
\begin{align*}
|\tau - k^{2j+1}| &\sim |P_4(k_1,k_2,k_3,-k)|\\
&\sim |(k-k_1)(k-k_2)(k-k_3)||k_1|^{2j-2}\\
&\lesssim |k_1|^{2j}|k_2|.
\end{align*}
Hence,
\begin{align*}
\norm{u_1u_2u_3}_{X^{-s,\frac12}} &\lesssim N_1^{-s+j}N_2^{\frac12}\norm{u_1u_2u_3}_{L_{t,x}^2} \\
&\lesssim N_1^{-s+j}N_2^{\frac12}\norm{u_1u_2}_{L_{t,x}^2}\norm{u_3}_{L_{t,x}^{\infty}}.
\end{align*}
From the H\"older and the Sobolev inequalities, \eqref{eq:strichartz} and $Y^s \subset C_t H^s$ we have
\begin{align*}
\norm{u_1u_2}_{L_{t,x}^2}\norm{u_3}_{L_{t,x}^{\infty}} &\lesssim N_3^{\frac12}\norm{u_1}_{L_{t,x}^4}\norm{u_2}_{L_{t,x}^4}\norm{u_3}_{L_t^{\infty}L_x^2} \\
&\lesssim N_3^{\frac12}\norm{u_1}_{X^{0,\frac13}}\norm{u_2}_{X^{0,\frac13}}\norm{u_3}_{Y^0}
\end{align*}
and this implies the right-hand side of \eqref{eq:trilinear1}.

{\rm{(b)}} We consider firstly $\bra{\tau - k^{2j+1}} \lesssim \bra{\tau_1 - k_1^{2j+1}}$ similarly as in (a). Then, \eqref{eq:trilinear2} and \eqref{eq:trilinear3} are also restricted by
\begin{equation}\label{eq:tril3}
\norm{u_1u_2u_3}_{X^{-s-j,0}} \lesssim N_1^jN_3 \norm{u_1}_{X^{0,0}}\norm{u_2}_{Y^0}\norm{u_3}_{Y^0},
\end{equation}
and
\begin{equation}\label{eq:tril4}
\norm{u_1u_2u_3}_{X^{-s-j-\frac12,0}} \lesssim N_1^jN_3^{\frac12} \norm{u_1}_{X^{0,0}}\norm{u_2}_{Y^0}\norm{u_3}_{Y^0}.
\end{equation}
Since $j \ge 2$, both $\bra{k}^{-s-j}$ and $\bra{k}^{-s-j-\frac12}$ are $\ell_k^2$-summable, and from the H\"older and the Young's inequalities, we obtain
\begin{align*}
\mbox{LHS of } \eqref{eq:tril3} &= \normo{\bra{k}^{-s-j}\sum_{\substack{k_1,k_2,k_3 \in \Z^{\ast}\\k_1+k_2+k_3=k}}\int_{\tau_1+\tau_2+\tau_3=\tau}\prod_{i=1}^3\wt{u}_i(\tau_i,k_i)\:d\tau_1d\tau_2}_{\ell_k^2L_{\tau}^2}\\
&\lesssim \normo{\sum_{\substack{k_1,k_2,k_3 \in \Z^{\ast}\\k_1+k_2+k_3=k}}\int_{\tau_1+\tau_2+\tau_3=\tau}\prod_{i=1}^3\wt{u}_i(\tau_i,k_i)\:d\tau_1d\tau_2}_{\ell_k^{\infty}L_{\tau}^2}\\
&\lesssim \norm{\wt{u}_1}_{\ell_k^2L_{\tau}^2}\norm{\wt{u}_2}_{\ell_k^2L_{\tau}^1}\norm{\wt{u}_3}_{\ell_k^1L_{\tau}^1}\\
&\lesssim N_3^{\frac12}\norm{u_1}_{X^{0,0}}\norm{\wt{u}_2}_{Y^0}\norm{\wt{u}_3}_{\ell_k^2L_{\tau}^1},
\end{align*}
and
\begin{align*}
\mbox{LHS of } \eqref{eq:tril4} &= \normo{\bra{k}^{-s-j-\frac12}\sum_{\substack{k_1,k_2,k_3 \in \Z^{\ast}\\k_1+k_2+k_3=k}}\int_{\tau_1+\tau_2+\tau_3=\tau}\prod_{i=1}^3\wt{u}_i(\tau_i,k_i)\:d\tau_1d\tau_2}_{\ell_k^2L_{\tau}^2}\\
&\lesssim \normo{\sum_{\substack{k_1,k_2,k_3 \in \Z^{\ast}\\k_1+k_2+k_3=k}}\int_{\tau_1+\tau_2+\tau_3=\tau}\prod_{i=1}^3\wt{u}_i(\tau_i,k_i)\:d\tau_1d\tau_2}_{\ell_k^{\infty}L_{\tau}^2}\\
&\lesssim \norm{\wt{u}_1}_{\ell_k^2L_{\tau}^2}\norm{\wt{u}_2}_{\ell_k^2L_{\tau}^1}\norm{\wt{u}_3}_{\ell_k^1L_{\tau}^1}\\
&\lesssim N_3^{\frac12}\norm{u_1}_{X^{0,0}}\norm{\wt{u}_2}_{Y^0}\norm{\wt{u}_3}_{\ell_k^2L_{\tau}^1},
\end{align*}
each last term implies the right-hand side of \eqref{eq:trilinear2} and \eqref{eq:trilinear3}, respectively.

Next, we consider $\bra{\tau - k^{2j+1}} \gg \bra{\tau_i - k_i^{2j+1}}$ for all $i=1,2,3$. Then, from \eqref{eq:algebra2}, we have similarly as before that
\begin{align*}
|\tau - k^{2j+1}| &\sim |P_4(k_1,k_2,k_3,-k)|\\
&\sim |(k-k_1)(k-k_2)(k-k_3)||k_1|^{2j-2}\\
&\lesssim |k_1|^{2j}\max(|k_3|,|k|).
\end{align*}
For $|k_3| \ge |k|$, since $\bra{k}^{-s-j}$ is $\ell^2$-summable, we use the similar argument as above to show
\begin{align*}
\norm{u_1u_2u_3}_{X^{-s-j,\frac12}} &\lesssim N_1^{j}N_3^{\frac12}\norm{\wt{u}_1 \ast \wt{u}_2 \ast \wt{u}_3}_{\ell_k^{\infty}L_{\tau}^2} \\
&\lesssim N_1^{j}N_3^{\frac12}\norm{\wt{u}_1}_{\ell_k^2L_{\tau}^2}\norm{\wt{u}_2}_{\ell_k^2L_{\tau}^1}\norm{\wt{u}_3}_{\ell_k^1L_{\tau}^1}\\
&\lesssim N_1^{j}N_3\norm{u_1}_{X^{0,0}}\norm{\wt{u}_2}_{Y^0}\norm{\wt{u}_3}_{\ell_k^2L_{\tau}^1}.
\end{align*}
For $|k| \ge |k_3|$, by the same argument, we have
\begin{align*}
\norm{u_1u_2u_3}_{X^{-s-j-\frac12,\frac12}} &\lesssim N_1^{j}\norm{\wt{u}_1 \ast \wt{u}_2 \ast \wt{u}_3}_{\ell_k^{\infty}L_{\tau}^2} \\
&\lesssim N_1^{j}\norm{\wt{u}_1}_{\ell_k^2L_{\tau}^2}\norm{\wt{u}_2}_{\ell_k^2L_{\tau}^1}\norm{\wt{u}_3}_{\ell_k^1L_{\tau}^1}\\
&\lesssim N_1^{j}N_3^{\frac12}\norm{u_1}_{X^{0,0}}\norm{\wt{u}_2}_{Y^0}\norm{\wt{u}_3}_{\ell_k^2L_{\tau}^1}.
\end{align*}
Thus, we complete the proof.
\end{proof}

\section{Global well-posedness for $j \ge 3$.}\label{sec:global}
In this section, we will prove the global well-posedness of \eqref{eq:kdv} for $-\frac{j}{2} \le s < 0$$\footnote{Due to the $L^2$-conservation laws, it suffices to consider the case when $-\frac{j}{2} \le s < 0$.}$, when $j \ge 3$. We use the method of almost conservation law (so called "I-method") in \cite{CKSTT1}. Before introducing the modified energy, we introduce some definitions.
\begin{definition}
An $n$-multiplier is a function $m : \R^n \to \mathbb{C}$. We say an $n$-multiplier $m$ is symmetric if
\[m(\xi_1, \cdots, \xi_n) = m(\xi_{\sigma(1)},\cdots \xi_{\sigma(n)}) \hspace{2em} \mbox{for all} \hspace{1em} \sigma \in S_n,\]
where $S_n$ is the group of all permutations on $n$ objects, with the symmetrization
\[[m(\xi_1,\cdots,\xi_n)]_{sym}:= \frac{1}{n!}\sum_{\sigma \in S_n}m(\xi_{\sigma(1)},\cdots \xi_{\sigma(n)}).\]
\end{definition}
Even though the domain of $m$ is $\R^n$, we will only be interested in $m$ on the hyperplane $\xi_1 + \cdots + \xi_n = 0$.
\begin{definition}
An $n$-linear functional $\Lambda _n$ acting on functions $u_1,\cdots,u_n$ generated by an $n$-multiplier $m$ is given by
\[\Lambda_n(m:u_1,\cdots,u_n) := \int_{\Gamma_n}m(\xi_1, \cdots, \xi_n) \wh{u}_1(\xi_1) \cdots \wh{u}_n(\xi_n),\]
where $\Gamma_n = \set{(\xi_1 \cdots,\xi_n) \in \R^n : \xi_1 + \cdots + \xi_n =0}$. In particular, when $u_1, \cdots, u_n$ are the same functions, we write $\Lambda_n(m)$.
\end{definition}
Now, let us define an operator $I$ which operates $\wh{Iu} = m(\xi)\wh{u}(\xi)$, and acts like an identity and an integral operator on low and high frequencies, respectively, by choosing a smooth monotone multiplier satisfying
\begin{equation}\label{eq:multiplier}
\begin{split}
m(\xi) = \left\{
\begin{array}{cl}
1, & |\xi| < N \\
N^{-s}|\xi|^s, & |\xi| > 2N ,
\end{array}
\right.
\end{split}
\end{equation}
for fixed $N$ (which will be chosen later). 

Let us define the modified energy $E_I^2(t)$ by
\[E_I^2(t) = \norm{Iu}_{L_x^2}^2 = \Lambda_2(m(\xi_1)m(\xi_2)).\]
The last equality follows from the Plancherel theorem and the facts that $u$ is real-valued, $m$ is even. In order to approach our goal, we further define modified energies (so called, \emph{correction terms}) by using the following lemma:
\begin{lemma}
Suppose $u$ be a solution of \eqref{eq:kdv} and $m$ is a symmetric $n$-multiplier. Then
\begin{equation}\label{eq:normal}
\frac{d}{dt}\Lambda_n(m) = \Lambda_n(m\alpha_n) -i\frac{n}{2}\Lambda_{n+1}([m(\xi_1, \cdots, \xi_{n-1}, \xi_n+\xi_{n+1})\set{\xi_n+\xi_{n+1}}]_{sym}),
\end{equation}
where
\[\alpha_n = i(\xi_1^{2j+1} + \cdots + \xi_n^{2j+1}).\]
\end{lemma}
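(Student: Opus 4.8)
The plan is to differentiate the $n$-linear functional $\Lambda_n(m)$ directly in time, substitute the equation \eqref{eq:kdv} for each factor $\partial_t \wh{u}(\xi_i)$, and then symmetrize. First I would compute, on the hyperplane $\Gamma_n$,
\[
\frac{d}{dt}\Lambda_n(m) = \sum_{i=1}^n \Lambda_n\!\left(m(\xi_1,\dots,\xi_n)\,;\,u,\dots,\partial_t u,\dots,u\right),
\]
where $\partial_t u$ occupies the $i$-th slot. Taking the spatial Fourier transform of \eqref{eq:kdv} gives $\partial_t \wh{u}(\xi) = (-1)^j i\,\xi^{2j+1}\wh{u}(\xi) - \tfrac{i\xi}{2}\wh{(u^2)}(\xi)$; I would write the first term's contribution as the \emph{linear part} and the second as the \emph{nonlinear part}.

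For the linear part, each slot contributes $(-1)^j i\,\xi_i^{2j+1}$ against the unchanged multiplier $m$. Since $u$ is real-valued and we work on $\Gamma_n$, these $n$ contributions sum to $\Lambda_n\big(m\cdot i(\xi_1^{2j+1}+\cdots+\xi_n^{2j+1})\big) = \Lambda_n(m\alpha_n)$ after absorbing the sign $(-1)^j$ (one should double-check the convention: on $\Gamma_n$ the dispersion symbol $\sum \xi_i^{2j+1}$ is the relevant quantity and the stated $\alpha_n$ matches the $X^{s,b}$ weight $\tau - k^{2j+1}$, so the sign bookkeeping is the one routine point to verify). For the nonlinear part, the $i$-th slot becomes $-\tfrac{i\xi_i}{2}\wh{(u^2)}(\xi_i)$, which I would expand by writing $\wh{(u^2)}(\xi_i) = \int \wh u(\eta_1)\wh u(\eta_2)$ over $\eta_1+\eta_2 = \xi_i$; this turns the $i$-th term into an $(n+1)$-linear functional whose multiplier is $-\tfrac{i}{2}\,m(\xi_1,\dots,\xi_{i-1},\eta_1+\eta_2,\xi_{i+1},\dots,\xi_n)(\eta_1+\eta_2)$ on $\Gamma_{n+1}$. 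Relabelling the $n+1$ frequencies and using that $m$ is already symmetric in its own $n$ arguments, all $n$ such terms are equal after symmetrization, producing the factor $n$; combining with $-\tfrac{i}{2}$ yields $-\,i\tfrac{n}{2}\,\Lambda_{n+1}\big([m(\xi_1,\dots,\xi_{n-1},\xi_n+\xi_{n+1})\{\xi_n+\xi_{n+1}\}]_{sym}\big)$, which is exactly the second term of \eqref{eq:normal}.

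The main obstacle is not analytic but bookkeeping: one must be careful that (i) differentiating under the integral sign over $\Gamma_n$ is justified (it is, for smooth enough $u$, e.g. for $N$-truncated data, and then one passes to the general case by the well-posedness/density already available), (ii) the symmetrization of the nonlinear term genuinely collapses the $n$ terms to a single symmetrized multiplier — here the key is that $\Lambda_{n+1}$ is evaluated against $n+1$ copies of the \emph{same} function $u$, so any permutation of the frequency labels leaves the functional unchanged, and (iii) the sign $(-1)^{j+1}$ in \eqref{eq:kdv} is correctly tracked through the Fourier transform of $\partial_x^{2j+1}$, which contributes $(-1)^{j+1}(i\xi)^{2j+1} = (-1)^{j+1}\cdot i^{2j+1}\xi^{2j+1} = (-1)^{j+1}(-1)^j i\,\xi^{2j+1} = -i\,\xi^{2j+1}$, so moving it to the right-hand side of the Duhamel-type identity gives exactly the $+i\xi^{2j+1}$ needed for $\alpha_n$. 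Once these points are checked, \eqref{eq:normal} follows by collecting the linear and nonlinear contributions.
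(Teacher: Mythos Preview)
Your proposal is correct and is exactly the standard computation: differentiate $\Lambda_n(m)$ under the integral, substitute the Fourier-side form of \eqref{eq:kdv}, sum the $n$ linear contributions to obtain $\Lambda_n(m\alpha_n)$, and symmetrize the $n$ identical nonlinear contributions to obtain the $\Lambda_{n+1}$ term. The paper does not give its own proof here but simply refers to Proposition~1 in \cite{CKSTT1}, where precisely this argument is carried out; your sign check in (iii) is the correct one (the earlier $(-1)^j i\xi^{2j+1}$ in your display should just read $i\xi^{2j+1}$, as you in fact verify).
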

\begin{proof}
See the Proposition 1 in \cite{CKSTT1}.
\end{proof}
We compute the time derivative of $E_I^2(t)$,
\[\frac{d}{dt}E_I^2(t) = \Lambda_2(m(\xi_1)m(\xi_2)\alpha_2) -i\Lambda_{3}([m(\xi_1)m(\xi_2+\xi_3)\set{\xi_2+\xi_3}]_{sym}).\]
The first term vanishes since $\xi_1 + \xi_2 = 0$ implies $\alpha_2 = 0$, and hence we have from the remainder that
\[\frac{d}{dt}E_I^2(t) = \Lambda_{3}(-i[m(\xi_1)m(\xi_2+\xi_3)\set{\xi_2+\xi_3}]_{sym}).\]
Let us denote
\[M_3(\xi_1,\xi_2,\xi_3) = -i[m(\xi_1)m(\xi_2+\xi_3)\set{\xi_2+\xi_3}]_{sym},\]
and define the new modified energy
\[E_I^3(t) = E_I^2(t) + \La_3(\sigma_3),\]
where the symmetric 3-multiplier $\sigma_3$ will achieve a cancellation. Using \eqref{eq:normal} again, we have
\[\frac{d}{dt}E_I^3(t) = \Lambda_3(M_3) + \Lambda_3(\sigma_3\alpha_3) + \Lambda_4(-i\frac32[\sigma_3(\xi_1,\xi_2,\xi_3+\xi_4)\set{\xi_3+\xi_4}]_{sym}).\]
Taking
\[\sigma_3 = -\frac{M_3}{\alpha_3}\]
gives a cancellation of the first two terms. With this choice, similarly as before, the time derivative of $E_I^3(t)$ is a 4-linear expression $\Lambda_4(M_4)$, where
\[M_4(\xi_1,\xi_2,\xi_3,\xi_4) = -i\frac32[\sigma_3(\xi_1,\xi_2,\xi_3+\xi_4)\set{\xi_3+\xi_4}]_{sym}.\]
In the same manner, we define the third modified energy by
\[E_I^4(t)= E_I^3(t) + \Lambda_4(\sigma_4)\]
with
\[\sigma_4 = -\frac{M_4}{\alpha_4},\]
and we obtain
\[\frac{d}{dt}E_I^4(t) = \La_5(M_5),\]
where
\[M_5(\xi_1,\cdots,\xi_5) = -2i[\sigma_4(\xi_1,\xi_2,\xi_3,\xi_4+\xi_5)\set{\xi_4+\xi_5}]_{sym}.\]
Under this setting, in order to show the global well-posedness for \eqref{eq:kdv}, we need to show that $E_I^2(t)$ is comparable to $E_I^4(t)$ at first, and next $E_I^4(t)$ is almost conserved. Let us start with obtaining some pointwise estimates which play a crucial role to show Proposition \ref{prop:comparable} and \ref{prop:quinti}, later. We define and state some calculus properties. If $m$ is of the form \eqref{eq:multiplier}, then $m^2$ satisfies 
\begin{equation}\label{eq:multiplier1}
\begin{split}
&m^2(\xi) \sim m^2(\xi') \quad \mbox{for} \quad |\xi| \sim |\xi'|,\\
&(m^2)'(\xi) = O\left(\frac{m^2(\xi)}{|\xi|}\right),\\
&(m^2)''(\xi) = O\left(\frac{m^2(\xi)}{|\xi|^2}\right),
\end{split}
\end{equation}
for all non-zero $\xi$. With this notion, we can observe two forms of the mean value formula which follow directly from the fundamental theorem of calculus. If $|\eta|,|\lambda| \ll |\xi|$, then,
\begin{equation}\label{eq:mean1}
m^2(\xi+\eta) - m^2(\xi) = O\left(|\eta|\frac{m^2(\xi)}{|\xi|}\right)
\tag{MVT}\end{equation}
and
\begin{equation}\label{eq:mean2}
m^2(\xi + \eta + \la) - m^2(\xi + \eta) - m^2(\xi + \la) + m^2(\xi) = O\left(|\eta||\la|\frac{m^2(\xi)}{|\xi|^2}\right).
\tag{DMVT}\end{equation}
From the following two lemmas, the multiplier $\sigma_3$ can be smoothly extended on $\R^3$ as in \cite{KT}.
\begin{lemma}\label{lem:extension}
Let $m$ is of the form \eqref{eq:multiplier}. Then for each dyadic $\lambda \le \eta$, there is an extension of $\sigma_3$ from the diagonal set
\[\set{(\xi_1,\xi_2,\xi_3) \in \Gamma_3 : |\xi_1|,|\xi_2| \sim \eta, \quad |\xi_3| \sim \lambda}\]
to the full dyadic set
\[\set{(\xi_1,\xi_2,\xi_3) \in \R^3 : |\xi_1|,|\xi_2| \sim \eta, \quad |\xi_3| \sim \lambda}\]
which satisfies the size and regularity conditions
\begin{equation}\label{eq:extension}
|\partial_{\xi_1}^{\beta_1}\partial_{\xi_2}^{\beta_2}\partial_{\xi_3}^{\beta_3}\sigma_3(\xi_1,\xi_2,\xi_3)| \lesssim m^2(\lambda)\eta^{-2j-\beta_1-\beta_2}\lambda^{-\beta_3}.
\end{equation}
The implicit constant does not depend on $\lambda, \eta$, but may depend on $\beta_i's$, $i=1,2,3$.
\end{lemma}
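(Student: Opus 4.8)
The plan is to write $\sigma_3$ in closed form on $\Gamma_3$, extract the decisive cancellation in its numerator from the mean value estimates \eqref{eq:mean1}--\eqref{eq:mean2}, and then detach the formula from the hyperplane $\xi_1+\xi_2+\xi_3=0$ by substituting for one variable, so that the denominator becomes a genuine polynomial with the expected symbol bounds on the dyadic block. First I would record the closed form: on $\Gamma_3$ one has $\xi_i+\xi_j=-\xi_k$ for $\{i,j,k\}=\{1,2,3\}$, so, since $m$ is even, the symmetrization defining $M_3$ collapses to $M_3=\tfrac{i}{3}\bigl(m^2(\xi_1)\xi_1+m^2(\xi_2)\xi_2+m^2(\xi_3)\xi_3\bigr)$, while $\alpha_3=i\bigl(\xi_1^{2j+1}+\xi_2^{2j+1}+\xi_3^{2j+1}\bigr)=i\,\xi_1\xi_2\xi_3\,Q_3(\xi_1,\xi_2,\xi_3)$ with $|Q_3|\sim\eta^{2j-2}$ on the block by Lemma~\ref{lem:algebra}(a). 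Hence $\sigma_3=-M_3/\alpha_3$ equals $-\dfrac{m^2(\xi_1)\xi_1+m^2(\xi_2)\xi_2+m^2(\xi_3)\xi_3}{3\,\xi_1\xi_2\xi_3\,Q_3}$ on the diagonal set. When $\lambda\ll\eta$, writing $\xi_1=-\xi_2-\xi_3$ and applying \eqref{eq:mean1} to $m^2(\xi_1)=m^2(\xi_2+\xi_3)$ turns the numerator into $m^2(\xi_2)(\xi_1+\xi_2)+m^2(\xi_3)\xi_3+O(\lambda m^2(\eta))=O(\lambda\,m^2(\lambda))$, using $\xi_1+\xi_2=-\xi_3$ and $m^2(\eta)\le m^2(\lambda)$; when $\lambda\sim\eta$ the crude bound $O(\eta m^2(\lambda))$ on the numerator already suffices. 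Together with $|\xi_1\xi_2\xi_3Q_3|\sim\eta^{2j}\lambda$ this yields $|\sigma_3|\lesssim m^2(\lambda)\eta^{-2j}$, the $\beta_1=\beta_2=\beta_3=0$ case of \eqref{eq:extension}.

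For the extension I would parametrize the block by the large frequency $\xi_1$ and the small frequency $\xi_3$ and substitute $\xi_2\mapsto-\xi_1-\xi_3$ throughout the formula above. Since the triple $(\xi_1,-\xi_1-\xi_3,\xi_3)$ lies on $\Gamma_3$, Lemma~\ref{lem:algebra}(a) turns the factor $Q_3$ into a genuine polynomial $\wt{Q}_3(\xi_1,\xi_3)$ of degree $2j-2$ with $|\wt{Q}_3|\sim\eta^{2j-2}$ on the block (here one uses that $\lambda\le\eta$ makes $|\xi_1+\xi_3|\sim\eta$, so all three denominator factors are bounded below; in the degenerate regime $\lambda\sim\eta$ all frequencies are comparable and one may localize further to the region $|\alpha_3|\sim\eta^{2j+1}$). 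I then define $\wt{\sigma}_3(\xi_1,\xi_2,\xi_3)$ to be this function of $(\xi_1,\xi_3)$ alone; it agrees with $\sigma_3$ on $\Gamma_3$, and $\partial_{\xi_2}\wt{\sigma}_3\equiv0$ makes the $\beta_2$-bounds in \eqref{eq:extension} automatic.

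The bounds \eqref{eq:extension} for $\beta_1+\beta_3\ge1$ would then follow from the quotient rule and induction on $\beta_1+\beta_3$. The denominator $D:=3\,\xi_1(-\xi_1-\xi_3)\xi_3\,\wt{Q}_3$ is a polynomial with $|D|\sim\eta^{2j}\lambda$ obeying $|\partial_{\xi_1}^{\beta_1}\partial_{\xi_3}^{\beta_3}D|\lesssim|D|\,\eta^{-\beta_1}\lambda^{-\beta_3}$, because each $\partial_{\xi_1}$ produces a near-cancelling difference of equal even powers (e.g.\ $\xi_1^{2j}-\xi_2^{2j}=O(\eta^{2j-1}\lambda)$ since $\xi_1+\xi_2=-\xi_3$), costing only $\eta^{-1}$, while $\partial_{\xi_3}$ costs at most $\lambda^{-1}$. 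For the numerator the essential point is that each $\partial_{\xi_1}$ again saves a factor $\lambda/\eta$: differentiating $m^2(\xi_1)\xi_1+m^2(\xi_2)\xi_2$ gives $g(\xi_1)-g(\xi_2)$ with $g(\xi):=(m^2)'(\xi)\xi+m^2(\xi)$ an \emph{even} symbol satisfying $g^{(\ell)}(\xi)=O\bigl(m^2(\xi)|\xi|^{-\ell}\bigr)$ by \eqref{eq:multiplier1}, so $g(\xi_1)-g(\xi_2)=g(\xi_2+\xi_3)-g(\xi_2)=O\bigl(\lambda\, m^2(\eta)/\eta\bigr)$ by \eqref{eq:mean1} (and \eqref{eq:mean2} for the higher $\xi_1$-derivatives), whereas $\xi_3$-derivatives require no cancellation because $|\xi_3|\sim\lambda\le\eta$. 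Assembling these via the quotient rule gives $|\partial_{\xi_1}^{\beta_1}\partial_{\xi_2}^{\beta_2}\partial_{\xi_3}^{\beta_3}\wt{\sigma}_3|\lesssim m^2(\lambda)\eta^{-2j-\beta_1-\beta_2}\lambda^{-\beta_3}$, which is \eqref{eq:extension}. I expect the main obstacle to be exactly this last step --- isolating the numerator cancellation ($O(\lambda m^2(\lambda))$ rather than the crude $O(\eta m^2(\lambda))$) and checking that it persists under repeated differentiation --- for which the calculus facts \eqref{eq:multiplier1}, \eqref{eq:mean1}, \eqref{eq:mean2} and the factorization in Lemma~\ref{lem:algebra}(a) must be combined carefully, along the lines of \cite{KT}.
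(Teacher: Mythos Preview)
Your approach is essentially the paper's: write $\sigma_3=-M_3/\alpha_3$ as an explicit quotient, use Lemma~\ref{lem:algebra}(a) for the denominator and \eqref{eq:mean1} for the numerator cancellation, then extend off $\Gamma_3$ by substituting away one variable. Your derivative analysis is in fact more detailed than the paper's, which simply records the two formulas \eqref{eq:M3-1}--\eqref{eq:M3-2} and invokes \eqref{eq:multiplier1}, \eqref{eq:mean1}.

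The one genuine difference is the $\lambda\sim\eta$ case. You substitute $\xi_2\mapsto -(\xi_1+\xi_3)$ uniformly, which puts $\xi_1+\xi_3$ in the denominator; on the full $(\xi_1,\xi_3)$-block with $|\xi_1|\sim|\xi_3|\sim\eta$ this factor can vanish, so your formula is singular there and your remark ``one may localize further to the region $|\alpha_3|\sim\eta^{2j+1}$'' does not by itself produce an extension to the whole dyadic block as the lemma asserts. The paper instead keeps all three variables free in \eqref{eq:M3-1} when $\lambda\sim\eta$: then each of $\xi_1,\xi_2,\xi_3$ is $\sim\eta$ on the block, the crude numerator bound $O(\eta\,m^2(\eta))$ already matches, and every derivative costs exactly $\eta^{-1}$, which is the claimed rate since $\lambda\sim\eta$. (One still needs an off-diagonal meaning for $Q_3$, but any polynomial extension with $|Q_3|\sim\eta^{2j-2}$ near $\Gamma_3$ suffices; this is the standard manoeuvre from \cite{KT}.) If you want to keep your single-substitution viewpoint, a clean fix is to split the $\lambda\sim\eta$ block into sign sectors so that $|\xi_1+\xi_3|\sim\eta$ on each piece, or simply revert to the three-variable formula there as the paper does.
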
 
\begin{proof}
We may assume that $|\xi_1| \gtrsim N$, otherwise $\sigma_3 \equiv 0$. Since $\xi_1 + \xi_2 + \xi_3 = 0$, we have from \eqref{eq:algebra1} that $\alpha_3 = i\xi_1\xi_2\xi_3Q_3(\xi_1,\xi_2,\xi_3)$ with the size $|\alpha_3| \sim \lambda \eta^{2j}$ and 
\[M_3(\xi_1,\xi_2,\xi_3) = -i[m(\xi_1)m(\xi_2+\xi_3)\set{\xi_2+\xi_3}]_{sym} = i(m^2(\xi_1)\xi_1+m^2(\xi_2)\xi_2+m^2(\xi_3)\xi_3).\]
If $\lambda \sim \eta$, we extend $\sigma_3$ by
\begin{equation}\label{eq:M3-1}
\sigma_3(\xi_1,\xi_2,\xi_3) = C \frac{m^2(\xi_1)\xi_1+m^2(\xi_2)\xi_2+m^2(\xi_3)\xi_3}{\xi_1\xi_2\xi_3Q_3(\xi_1,\xi_2,\xi_3)}
\end{equation}
and if $\lambda \ll \eta$, we extend $\sigma_3$ by
\begin{equation}\label{eq:M3-2}
\sigma_3(\xi_1,\xi_2,\xi_3) = C \frac{m^2(\xi_1)\xi_1-m^2(\xi_1 + \xi_3)\set{\xi_1 + \xi_3}+m^2(\xi_3)\xi_3}{\xi_1\xi_2\xi_3Q_3(\xi_1,\xi_2,\xi_3)}.
\end{equation}
From \eqref{eq:multiplier1} and \eqref{eq:mean1}, we have the desired result.
\end{proof}
From Lemma \ref{lem:extension}, we can easily obtain the pointwise bound for $M_3$. If $|\xi_1|\sim|\xi_2|\sim|\xi_3|$, we have directly 
\begin{equation}\label{eq:M3}
|M_3(\xi_1,\xi_2,\xi_3)| \lesssim m^2(\xi_3)|\xi_3|,
\end{equation}
from \eqref{eq:M3-1}, the triangle inequality and \eqref{eq:multiplier1}. Otherwise (i.e., if $|\xi_1| \sim |\xi_2| \gg |\xi_3|$), from \eqref{eq:M3-2}, we also have \eqref{eq:M3} by using \eqref{eq:mean1}.

Next, we give the pointwise estimate for $M_4$ which is the most important thing to show the almost conservation of $E_I^4(t)$.
\begin{lemma}\label{lem:M4}
Let $m$ is of the form \eqref{eq:multiplier}. For $N_i,N_{jk}$ dyadic and $N_1 \ge N_2 \ge N_3 \ge N_4$ where $|\xi_i| \sim N_i$ and $|\xi_j + \xi_k| \sim |N_{jk}|$, we have
\begin{equation}\label{eq:M4}
|M_4(\xi_1,\xi_2,\xi_3,\xi_4)| \lesssim \frac{|\alpha_4|m^2(\min(N_i, N_{jk}))}{(N+N_1)^j(N+N_2)^j(N+N_3)^{2j-1}(N+N_4)}.
\end{equation}
\end{lemma}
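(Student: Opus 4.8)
The plan is to trace back through the recursive definition of $M_4$ to the data we already control, namely the pointwise bounds on $\sigma_3$ from Lemma \ref{lem:extension} and the factorization of $\alpha_4$ from Lemma \ref{lem:algebra}(b). Recall that $M_4(\xi_1,\xi_2,\xi_3,\xi_4) = -i\frac32[\sigma_3(\xi_1,\xi_2,\xi_3+\xi_4)\{\xi_3+\xi_4\}]_{sym}$ and $\sigma_3 = -M_3/\alpha_3$, so after symmetrization $M_4$ is a sum over the three partitions of $\{\xi_1,\ldots,\xi_4\}$ into pairs of terms of the shape
\[
\frac{M_3(\xi_a,\xi_b,\xi_c+\xi_d)(\xi_c+\xi_d) - M_3(\xi_a',\xi_b',\xi_c'+\xi_d')(\xi_c'+\xi_d') }{\alpha_3(\cdots)}
\]
where on $\Gamma_4$ we have $\xi_c+\xi_d = -(\xi_a+\xi_b)$. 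The key algebraic input is that on $\Gamma_4$, after clearing denominators, the three summands combine: using $\xi_1+\xi_2+\xi_3+\xi_4=0$ one finds a common factor, and the resulting numerator is divisible by $\alpha_4 = P_4(\xi_1,\xi_2,\xi_3,\xi_4)$, which by Lemma \ref{lem:algebra}(b) factors as $(\xi_1+\xi_2)(\xi_1+\xi_3)(\xi_1+\xi_4)Q_4$ with $|Q_4| \sim N_1^{2j-2}$. This is exactly the mechanism by which the $|\alpha_4|$ appears in the numerator of \eqref{eq:M4} rather than as a loss.

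First I would reduce to the frequency-sorted case $N_1 \ge N_2 \ge N_3 \ge N_4$ and split according to how many of the quantities $|\xi_i|$ are comparable to $N_1$; by the constraint $\sum\xi_i = 0$ we always have $N_1 \sim N_2$. The main case is the genuinely non-degenerate one where the three pair-sums $|\xi_1+\xi_2|, |\xi_1+\xi_3|, |\xi_1+\xi_4|$ are all nonzero (the degenerate subcases where one vanishes force $\alpha_4 = 0$ and the bound is trivial, or reduce to lower-order identities). In this case I would: (i) insert the extension \eqref{eq:M3-1}--\eqref{eq:M3-2} of $\sigma_3$ from Lemma \ref{lem:extension}, noting that each $\sigma_3$ factor carries a weight $m^2(\min)\,\eta^{-2j}$ where $\eta$ is the large frequency in that triple; (ii) apply the discrete mean value estimates \eqref{eq:mean1} and \eqref{eq:mean2} to the differences of $m^2$-weights that arise from the symmetrization, gaining factors of $|\xi_c+\xi_d|/\eta$; (iii) collect the $\alpha_3$ denominators, each of size $\sim (\text{its large freq})^{2j}\cdot(\text{its small freq})$, and check that after multiplying by $|\alpha_4| \sim |\xi_1+\xi_2||\xi_1+\xi_3||\xi_1+\xi_4| N_1^{2j-2}$ the net power of $(N+N_1)\cdots(N+N_4)$ in the denominator is exactly $(N+N_1)^j(N+N_2)^j(N+N_3)^{2j-1}(N+N_4)$, with an $m^2(\min(N_i,N_{jk}))$ surviving on top. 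The presence of $(N+N_i)$ rather than $N_i$ is handled uniformly because on frequencies $\lesssim N$ the multiplier $m$ equals $1$ and the $\sigma_3$-extension bounds of Lemma \ref{lem:extension} degrade gracefully; I would phrase the whole count with $\langle\cdot\rangle_N := N + |\cdot|$ from the outset.

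The hard part will be the bookkeeping in step (iii): because the symmetrization produces three terms each of which is itself a difference, a naive term-by-term bound loses a power of $N_1$ or fails to produce the full $|\alpha_4|$, so one must exploit the cancellation among the three partitions (equivalently, recognize the numerator as a polynomial multiple of $P_4$) before estimating. I expect this to be the analogue of the corresponding computation in \cite{CKSTT1} and \cite{KT} for KdV, the difference being that the resonance polynomial $\alpha_4$ now has degree $2j+1$ and factors with the extra $Q_4 \sim N_1^{2j-2}$; tracking these higher powers correctly through the three cases $N_3 \sim N_1$, $N_4 \sim N_3 \ll N_1$, and $N_4 \ll N_3$ is where the care is needed, but no new idea beyond Lemmas \ref{lem:algebra}, \ref{lem:extension}, and the (D)MVT is required.
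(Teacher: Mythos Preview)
Your proposal assembles the right ingredients---Lemma \ref{lem:algebra}(b) for $\alpha_4$, the extension bounds \eqref{eq:extension} on $\sigma_3$, and the mean-value formulas \eqref{eq:mean1}--\eqref{eq:mean2}---and these are exactly what the paper (deferring to \cite{GW}) uses. But the mechanism you describe is not the one in the paper. You assert that after clearing denominators the symmetrized numerator is \emph{divisible by} $\alpha_4$, and that one must exploit a cancellation among all three pair-partitions before estimating. The paper does neither: since the statement is equivalently a bound on $|\sigma_4|=|M_4|/|\alpha_4|$, the argument simply bounds each term $|\sigma_3(\xi_a,\xi_b,\xi_c+\xi_d)(\xi_c+\xi_d)|$ in the symmetrization \emph{individually} via \eqref{eq:extension} and then divides by the known size of $|\alpha_4|$ from \eqref{eq:algebra2}. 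Pairwise cancellations via \eqref{eq:mean1} or \eqref{eq:mean2} are invoked only in certain regimes where two nearby terms separately exceed the target, never a global three-term identity. The case the paper works out explicitly, $N_4\ll N/2$ with $N/2\lesssim N_{12}<N_1/4$, is handled by the single term $\sigma_3(-\xi_3,-\xi_4,\xi_3+\xi_4)(\xi_3+\xi_4)$ alone: \eqref{eq:extension} gives this $\lesssim N_3^{1-2j}$, while $|\alpha_4|\gtrsim N\,N_1^{2j}$, and the bound follows. So your outline becomes correct and matches the paper if you drop the divisibility claim and argue term by term; what you gain is a considerably shorter and more robust proof, since the ``polynomial multiple of $P_4$'' structure you are counting on is neither needed nor established.
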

\begin{proof}
This proof is almost same as the proof of Proposition 5.3 in \cite{GW}. Moreover, when $N_4 \ll N/2$ and $N/2 \lesssim N_{12} < N_1/4$, we obtain exact \eqref{eq:M4} from the bound of 
\[\sigma_3(-\xi_3,-\xi_4,\xi_3+\xi_4)(\xi_3+\xi_4).\]
Indeed, let $N_4 \ll N/2$ ($\Rightarrow N_{13} \sim N_1$) and $N/2 \lesssim N_{12} < N_1/4$. From \eqref{eq:algebra2} and \eqref{eq:extension}, we get 
\[|\alpha_4| \sim |\xi_3+\xi_4|N_1^{2j} \gtrsim N/2 \cdot N_1^{2j}\]
and
\[|\sigma_3(-\xi_3,-\xi_4,\xi_3+\xi_4)(\xi_3+\xi_4)| \lesssim |\xi_3+\xi_4|N_3^{-2j} \lesssim N_3^{-2j+1},\]
respectively, and hence
\[\left|\frac{\sigma_3(-\xi_3,-\xi_4,\xi_3+\xi_4)(\xi_3+\xi_4)}{\alpha_4}\right| \lesssim \frac{1}{N_1^{2j}N_3^{2j-1}N}.\]
In the other cases, using \eqref{eq:algebra2}, \eqref{eq:mean1}, \eqref{eq:mean2} and \eqref{eq:extension}, one can obtain better bounds than the right-hand side of \eqref{eq:M4}. See \cite{GW} for the detailed proof.
\end{proof}
From the definition of $M_5$ and Lemma \ref{lem:M4}, we have the following pointwise bound for $M_5$:
\begin{lemma}\label{lem:M5}
Let $m$ is of the form \eqref{eq:multiplier}, and for $N_i,N_{jk}$ dyadic such that $|\xi_i|\sim N_i$ and $|\xi_j + \xi_l| = |\xi_{jl}| \sim N_{jl}$. Suppose $N_1 \ge N_2 \ge N_3 \ge N_4 \ge N_5$.

{\rm{(a)}} If $N_{12} \sim N_3$, then
\begin{equation}\label{eq:M5a}
|M_5(\xi_1,\xi_2,\xi_3,\xi_4,\xi_5)| \lesssim \frac{N_{12}}{(N+N_3)^{2j}(N+N_4)^{2j-1}(N+N_5)}.
\end{equation}

{\rm{(b)}} If $N_3 \sim N_4$, then for $N_5 \ge N_{12}$, we have 
\begin{equation}\label{eq:M5b}
|M_5(\xi_1,\xi_2,\xi_3,\xi_4,\xi_5)| \lesssim \frac{N_{12}}{(N+N_3)^{j}(N+N_4)^{j}(N+N_{12})^j(N+N_5)^j},
\end{equation}
and otherwise, we have
\begin{equation}\label{eq:M5c}
|M_5(\xi_1,\xi_2,\xi_3,\xi_4,\xi_5)| \lesssim \frac{N_{12}}{(N+N_3)^{j}(N+N_4)^{j}(N+N_{12})^{j+\frac12}(N+N_5)^{j-\frac12}}.
\end{equation}
\end{lemma}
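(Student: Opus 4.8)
Recall that, by construction,
\[M_5(\xi_1,\dots,\xi_5) = -2i\big[\sigma_4(\xi_1,\xi_2,\xi_3,\xi_4+\xi_5)\set{\xi_4+\xi_5}\big]_{sym},\]
so after symmetrization $M_5$ is a sum of at most $\binom{5}{2}$ terms of the form $\sigma_4(\xi_c,\xi_d,\xi_e,\xi_a+\xi_b)\,(\xi_a+\xi_b)$, one for each choice of a contracted pair $\{\xi_a,\xi_b\}$ with complementary triple $\{\xi_c,\xi_d,\xi_e\}$. The plan is to estimate each such term directly, in the same spirit as Lemma \ref{lem:M4} estimates $M_4$. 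I would use four inputs: (i) Lemma \ref{lem:M4} in the form $|\sigma_4(\eta_1,\eta_2,\eta_3,\eta_4)| = |M_4/\alpha_4| \lesssim (N+L_1)^{-j}(N+L_2)^{-j}(N+L_3)^{-(2j-1)}(N+L_4)^{-1}$, where $L_1\ge L_2\ge L_3\ge L_4$ are the magnitudes of the arguments rearranged decreasingly and the harmless factor $m^2\le 1$ has been dropped; (ii) the trivial bound $|\xi_a+\xi_b|\le N+L_\ell$ for whichever slot $L_\ell$ the pair-sum occupies; (iii) the factorization $\alpha_4(\eta_1,\eta_2,\eta_3,\eta_4)=i(\eta_1+\eta_2)(\eta_1+\eta_3)(\eta_1+\eta_4)Q_4$ with $|Q_4|\sim L_1^{2j-2}$ from Lemma \ref{lem:algebra}(b), which fixes $|\alpha_4|$ once one knows which pair-sums among the $\eta_i$ are small; and (iv) the fact, built into Lemma \ref{lem:extension}, that $\sigma_3$ — hence $M_4$ and $\sigma_4$ — vanishes identically once all its arguments have size $<N$.

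The privileged contracted pair in both cases is $\{\xi_1,\xi_2\}$, since then $\sigma_4(\xi_3,\xi_4,\xi_5,\xi_1+\xi_2)$ carries \emph{no} argument of size $\sim N_1$. In case (a), $N_{12}\sim N_3$ forces $N_1\sim N_2$; the four arguments of this $\sigma_4$ then have magnitudes $N_3\sim N_{12}\ge N_4\ge N_5$, and Lemma \ref{lem:M4} followed by multiplication by $|\xi_1+\xi_2|=N_{12}$ is \emph{exactly} \eqref{eq:M5a}. In case (b), $N_3\sim N_4\ge\max(N_5,N_{12})\ge\min(N_5,N_{12})$, so the last two denominator factors coming from Lemma \ref{lem:M4} are $(N+\max(N_5,N_{12}))^{2j-1}$ and $(N+\min(N_5,N_{12}))$; multiplying by $N_{12}$ and using the elementary exponent comparisons $(N+N_5)^{j-1}\ge(N+N_{12})^{j-1}$ when $N_5\ge N_{12}$, respectively $(N+N_{12})^{j-\frac32}\ge(N+N_5)^{j-\frac32}$ when $N_{12}>N_5$ (legitimate since $j\ge 3$ here), yields \eqref{eq:M5b} and \eqref{eq:M5c} respectively.

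The remaining work is to show that every other contracted pair is dominated by the same right-hand side. In each such \emph{split} pair the two largest arguments of $\sigma_4$ have size $\sim N_1$, while — because $\xi_1+\dots+\xi_5=0$ — their sum equals minus the sum of the genuinely small frequencies and is thus $\lesssim N_3$; this is a near-cancellation inside $\sigma_4$. By Lemma \ref{lem:algebra}(b) the relevant $\alpha_4$ is then non-degenerate, of size $\sim\rho\,N_1^{2j}$ with $\rho\lesssim N_3$ the small pair-sum; feeding this into Lemma \ref{lem:M4}, discarding the term when $N_1<N$ (where $\sigma_4\equiv 0$), and absorbing the one surplus power of $N_1$ (produced by multiplying $\sigma_4$ by the pair-sum, itself of size $\sim N_1$) into the $(N+N_1)^{2j}$ that Lemma \ref{lem:M4} supplies, I expect each split term to fall under \eqref{eq:M5a}, resp. \eqref{eq:M5b}--\eqref{eq:M5c}. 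This is the same mechanism used for the fifth-order KdV modified energy in \cite{GW}, and it is exactly the higher-order dispersion — the exponent $2j-2$ in $|Q_3|$ and $|Q_4|$, equivalently the $(N+N_3)^{2j-1}$ rather than a bare $(N+N_3)$ in the bound for $\sigma_4$ — that lets the bookkeeping close uniformly in $j$.

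The main obstacle is precisely this last step. There are on the order of ten contracted pairs, each with a handful of sub-cases according to the ordering of $N_3,N_4,N_5,N_{12}$ and the position of $N$ among them, so the verification is long even though every individual estimate is a one- or two-line comparison of monomials. I would organize the write-up by first recording the privileged-pair bounds as above, and then reducing every split pair to that case through the $\alpha_4$-factorization, so that the case analysis, though lengthy, stays uniform; a reader willing to trust \cite{GW} may treat most of it as routine.
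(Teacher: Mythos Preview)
Your approach is correct and matches the paper's: both isolate the privileged contracted pair $\{\xi_1,\xi_2\}$, apply the $\sigma_4$ bound from Lemma \ref{lem:M4}, and then redistribute exponents via the monotonicity $\frac{1}{N^{2j-1}M}\lesssim\frac{1}{N^{2j-1-\alpha}M^{1+\alpha}}$ for $N\ge M$ to obtain \eqref{eq:M5b}--\eqref{eq:M5c}. The paper is in fact terser than you are --- it simply asserts $|M_5|\lesssim|\sigma_4(\xi_3,\xi_4,\xi_5,\xi_1+\xi_2)(\xi_1+\xi_2)|$ without writing out the split-pair verification you outline --- so your more careful treatment of the other symmetrized terms is a genuine addition rather than a departure (and note that the exponent comparison you flag as needing $j\ge3$ actually already works for $j\ge2$, since $j-\tfrac32\ge\tfrac12>0$).
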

\begin{proof}
Under the condition, we may assume that $N_1 \sim N_2 \gtrsim N$, since $M_5$ vanishes when $N_1 \ll N$. From the definition of $M_5$ and \eqref{eq:M4}, we have
\[|M_5(\xi_1,\xi_2,\xi_3,\xi_4,\xi_5)| \lesssim |\sigma_4(\xi_3,\xi_4,\xi_5,\xi_1+\xi_2)(\xi_1+\xi_2)|.\]
Using \eqref{eq:M4}, (a) can be easily proven. For (b), from the fact that if $N \ge M \ge 1$,
\[\frac{1}{N^{2j-1}M} \lesssim \frac{1}{N^{2j-1-\alpha}M^{1+\alpha}} \qquad \mbox{for} \quad 0 \le \alpha \le 2j-1\]
holds, and \eqref{eq:M4}, we can also easily prove \eqref{eq:M5b} and \eqref{eq:M5c}.
\end{proof}
Now, going back to the main parts in this section, we first prove that $E_I^2(t)$ is comparable to $E_I^4(t)$.
\begin{proposition}\label{prop:comparable}
Let $-\frac{j}{2} \le s < 0$ and $N \gg 1$. Then,
\begin{equation*}
|E_I^4(t) - E_I^2(t)| \lesssim \norm{Iu(t)}_{L^2}^3 + \norm{Iu(t)}_{L^2}^4.
\end{equation*}
\end{proposition}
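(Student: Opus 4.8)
The plan is to estimate the two correction terms $\Lambda_3(\sigma_3)$ and $\Lambda_4(\sigma_4)$ that appear in the difference $E_I^4(t) - E_I^2(t) = \Lambda_3(\sigma_3) + \Lambda_4(\sigma_4)$, and to bound each of them by a power of $\norm{Iu}_{L^2}$ using the pointwise multiplier bounds established in Lemma \ref{lem:extension} and Lemma \ref{lem:M4} together with the Cauchy--Schwarz inequality in the frequency variables. For the cubic term, I would dyadically decompose $u = \sum_{N_i} P_{N_i}u$ and, on each block with $N_1 \sim N_2 \ge N_3$, use the bound $|\sigma_3(\xi_1,\xi_2,\xi_3)| \lesssim m^2(N_3)\eta^{-2j}$ from \eqref{eq:extension} (with $\eta \sim N_1$). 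Since $|\sigma_3|$ must be compared with $m(\xi_1)m(\xi_2)m(\xi_3)$ — recall $E_I^2$ is built from $m(\xi_1)m(\xi_2)$ — the point is that $m^2(N_3) N_1^{-2j} \lesssim m(N_1)^2 m(N_3)$ whenever $N_1 \gtrsim N$ (the region where $\sigma_3 \not\equiv 0$), because $m$ is bounded by $1$ and $N_1^{-2j}$ is an enormous saving; the $N_1^{-2j}$ decay is then more than enough to sum the dyadic pieces and to absorb the loss of derivatives, reducing the estimate to $|\Lambda_3(\sigma_3)| \lesssim \norm{Iu}_{L^2}^2 \norm{Iu}_{L^2} = \norm{Iu}_{L^2}^3$ after an $L^2 \times L^2 \times L^\infty$-type (or Cauchy--Schwarz in $k$) splitting. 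A clean way to package this is to write $|\Lambda_3(\sigma_3)| \lesssim \int_{\Gamma_3} |\widehat{Iu}_1||\widehat{Iu}_2||\widehat{Iu}_3|$ with a uniformly summable multiplier, and then apply Plancherel.

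For the quartic term $\Lambda_4(\sigma_4) = -\Lambda_4(M_4/\alpha_4)$, I would invoke Lemma \ref{lem:M4} directly: $|\sigma_4| = |M_4|/|\alpha_4| \lesssim m^2(\min(N_i,N_{jk}))\big/\big[(N+N_1)^j(N+N_2)^j(N+N_3)^{2j-1}(N+N_4)\big]$. Again the comparison target is $m(\xi_1)m(\xi_2)m(\xi_3)m(\xi_4)$; since on the support $N_1 \sim N_2 \gtrsim N$ and since $m^2(\min) \lesssim m(N_3)m(N_4)$ (the two smallest frequencies carry the potential growth of $m$), one has $|\sigma_4| \lesssim m(N_1)m(N_2)m(N_3)m(N_4)\,(N+N_3)^{-(2j-1)}(N+N_4)^{-1}$, and the surplus negative powers $(N+N_3)^{-(2j-1)}(N+N_4)^{-1}$ with $j \ge 3$ make the dyadic sum converge with room to spare and let us estimate by $\norm{Iu}_{L^2}^4$ via Cauchy--Schwarz in the four frequency variables. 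Collecting the two bounds gives the claim. I would write the details as a short dyadic summation, being slightly careful that in both terms the multiplier $m$ is evaluated at the \emph{individual} frequencies after symmetrization — this is where one must check that the symmetrization in the definitions of $M_3, M_4$ does not destroy the pointwise bounds, but this is exactly what Lemmas \ref{lem:extension} and \ref{lem:M4} are stated to handle.

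The main obstacle, and the only place requiring genuine care, is the low-frequency/resonant regime of the cubic correction where $\sigma_3$ is defined by the \emph{extended} formula \eqref{eq:M3-2} rather than by the naive quotient $M_3/\alpha_3$: there one must confirm that the extension bound \eqref{eq:extension} — proved via \eqref{eq:multiplier1} and the mean value theorem \eqref{eq:mean1} — really does give the full gain $m^2(N_3)N_1^{-2j}$ uniformly across the full dyadic block (not just on the diagonal), so that the estimate $|\sigma_3| \lesssim m(N_1)m(N_2)m(N_3)$ holds even when the resonance function $Q_3$ is small. Once that uniform bound is in hand, everything else is a routine application of Cauchy--Schwarz and geometric summation in the dyadic parameters, and no smallness of time or norm is needed since these are purely algebraic/multiplier estimates. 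I therefore expect the write-up to be short, with the bulk of the work being the bookkeeping of the dyadic cases $N_1 \sim N_2 \gg N_3$ versus $N_1 \sim N_2 \sim N_3$ for $\sigma_3$, and the single display from Lemma \ref{lem:M4} doing essentially all the work for $\sigma_4$.
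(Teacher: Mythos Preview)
Your approach is essentially the paper's: write $E_I^4 - E_I^2 = \Lambda_3(\sigma_3) + \Lambda_4(\sigma_4)$, invoke the pointwise bounds from Lemmas \ref{lem:extension} and \ref{lem:M4}, dyadically decompose, and finish with H\"older/Sobolev (the paper uses $L^4 \times L^4 \times L^2$ for the cubic term and $L^2 \times L^2 \times L^\infty \times L^\infty$ for the quartic term, which amounts to the same Cauchy--Schwarz bookkeeping you sketch).

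One inequality in your write-up is false, however. Since $m$ is non-increasing and $\min(N_i,N_{jk}) \le N_4 \le N_3$, one has $m^2(\min) \ge m(N_4)^2 \ge m(N_3)m(N_4)$, the \emph{reverse} of what you claim. The repair is immediate and is what the paper actually does: simply drop $m^2(\min) \le 1$ and check directly that the denominator $(N+N_1)^{2j}(N+N_3)^{2j-1}(N+N_4)$ absorbs the growth of $1/\prod_i m(k_i)$. In the worst case $N_3,N_4 \sim N_1 \gtrsim N$ this reduces to $N_1^{-2j-4s} \lesssim N^{-4s}$, which holds for $s \ge -j/2$. The paper records the resulting multiplier as $N^{4s}\big/\big(N_1^{2(s+j)}\langle N_3\rangle^{s+2j-1}\langle N_4\rangle^{1+s}\big)$ and sums; your surplus factor $(N+N_3)^{-(2j-1)}(N+N_4)^{-1}$ is then correct, just not via the route you stated.

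A minor remark: the ``obstacle'' you raise about the extended $\sigma_3$ off the diagonal is not needed for this proposition. To bound $\Lambda_3(\sigma_3)$ one only uses $\sigma_3$ on $\Gamma_3$, where it equals $-M_3/\alpha_3$ and the bound \eqref{eq:M3} together with $|\alpha_3| \sim N_3 N_1^{2j}$ already gives $|\sigma_3| \lesssim m^2(N_3)N_1^{-2j}$; the off-diagonal extension is used only inside the proof of Lemma \ref{lem:M4}.
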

\begin{proof}
In view of $E_I^4(t)$, we know that $E_I^4(t) = E_I^3(t) + \Lambda_4(\sigma_4) = E_I^2(t) + \Lambda_3(\sigma_3) + \Lambda_4(\sigma_4)$, so it suffices to show
\begin{equation}\label{eq:comparablepf1}
|\La_3(\sigma_3)| \lesssim \norm{Iu(t)}_{L^2}^3,
\end{equation}
and
\begin{equation}\label{eq:comparablepf2}
|\La_4(\sigma_4)| \lesssim \norm{Iu(t)}_{L^2}^4.
\end{equation}
Let us now use $k_i$ and $k_{jl}$ as variables instead of $\xi_i$ and $\xi_{jl}$ to prevent confusion from the notations throughout the paper.

We first show \eqref{eq:comparablepf1} and may assume that the $\wh{u}$ is nonnegative. Let us define $v = Iu$. From \eqref{eq:M3-1}, we need to show that
\begin{equation}\label{eq:comparablepf1-1}
\left|\La_3\left(\frac{m^2(k_1)k_1+m^2(k_2)k_2+m^2(k_3)k_3}{k_1k_2k_3Q_3(k_1,k_2,k_3)m(k_1)m(k_2)m(k_3)}\right)\right| \lesssim \norm{v}_{L^2}^3.
\end{equation}
We make a Littlewood-Paley decomposition and without loss of generality, assume $N_1 \ge N_2 \ge N_3$ for $|k_i| \sim N_i$ (dyadic). If $N_1 \le \frac{N}{2}$, then $\La_3$ vanishes, so we also assume $N_1 \sim N_2 \gtrsim N$. We consider two cases separately: $N_3 \ll N$ and $N_3 \gtrsim N$.

\textbf{Case I.} $N_3 \ll N$. From \eqref{eq:M3}, \eqref{eq:algebra1} and \eqref{eq:multiplier}, \eqref{eq:comparablepf1-1} is reduced to 
\[\sum_{N_1 \sim N_2 \ge N_3}\left|\La_3\left(\frac{N^{2s}}{N_1^{2(s+j)}} : v_1,v_2,v_3 \right)\right| \lesssim \prod_{j=1}^3\norm{v_j}_{L^2}.\]
Since $s \ge -\frac{j}{2}$, we know that $N_1^{-2(s+j)} \le N_1^{-j}$, so \eqref{eq:comparablepf1-1} is reduced as
\[N^{2s}\sum_{N_1 \sim N_2 \ge N_3}N_1^{-j}\int v_1v_2v_3 \: dx\lesssim \norm{v_1}_{L^2}\norm{v_2}_{L^2}\norm{v_3}_{L^2}.\]
We use the H\"older and the Sobolev inequalities to show
\begin{align*}
N^{2s}\sum_{N_1 \sim N_2 \ge N_3}N_1^{-j}\int v_1v_2v_3 \: dx&\lesssim N^{2s}\sum_{N_1 \sim N_2 \ge N_3}N_1^{-j}\norm{v_1}_{L^4}\norm{v_2}_{L^4}\norm{v_3}_{L^2}\\
&\lesssim N^{2s}\sum_{N_1 \sim N_2 \ge N_3}N_1^{-j}N_1^{\frac12}\norm{v_1}_{L^2}\norm{v_2}_{L^2}\norm{v_3}_{L^2},
\end{align*}
which implies the right-hand side of \eqref{eq:comparablepf1-1}.

\textbf{Case II.} $N_3 \gtrsim N$. From \eqref{eq:M3}, \eqref{eq:algebra1} and \eqref{eq:multiplier}, \eqref{eq:comparablepf1-1} is reduced to 
\[\sum_{N_1 \sim N_2 \ge N_3}\left|\La_3\left(\frac{N^{s}N_3^s}{N_1^{2(s+j)}} : v_1,v_2,v_3 \right)\right| \lesssim \prod_{j=1}^3\norm{v_j}_{L^2}.\]
Similarly as before, we have from the H\"older and the Sobolev inequalities that
\begin{align*}
N^{s}\sum_{N_1 \sim N_2 \ge N_3\gtrsim N}N_1^{-j}N_3^s\int v_1v_2v_3 \: dx&\lesssim N^{2s}\sum_{N_1 \sim N_2 \ge N_3}N_1^{-j}N_3^s\norm{v_1}_{L^4}\norm{v_2}_{L^4}\norm{v_3}_{L^2}\\
&\lesssim N^{2s}\sum_{N_1 \sim N_2 \ge N_3}N_1^{-j}N_3^sN_1^{\frac12}\norm{v_1}_{L^2}\norm{v_2}_{L^2}\norm{v_3}_{L^2},
\end{align*}
which also implies the right-hand side of \eqref{eq:comparablepf1-1}.

We turn to prove \eqref{eq:comparablepf2}. We make again a Littlewood-Paley decomposition and without loss of generality, assume $N_1 \ge N_2 \ge N_3 \ge N_4$ for $|k_i| \sim N_i$ (dyadic). If $N_1 \le \frac{N}{2}$, then $\La_4$ vanishes, so we also assume $N_1 \sim N_2 \gtrsim N$. From \eqref{eq:M4} and \eqref{eq:multiplier}, we need to show, similarly as \eqref{eq:comparablepf1}, that
\begin{equation}\label{eq:comparablepf2-1}
\begin{split}
\sum_{N_1 \sim N_2 \ge N_3\ge N_4}\left|\La_4\left(\frac{1}{(N+N_1)^{2j}(N+N_3)^{2j-1}(N+N_4)\prod_{i=1}^{4}m(k_i)}:v_1,v_2,v_3,v_4\right)\right|\\
\lesssim \prod_{i=1}^4\norm{v_i}_{L^2}.
\end{split}
\end{equation}
From \eqref{eq:multiplier}, we know
\[\left|\frac{1}{(N+N_1)^{2j}(N+N_3)^{2j-1}(N+N_4)\prod_{i=1}^{4}m(k_i)}\right| \lesssim \frac{N^{4s}}{N_1^{2(s+j)}\bra{N_3}^{s+2j-1}\bra{N_4}^{1+s}}.\]
From the H\"older and Sobolev inequalities, we have
\begin{align*}
\int v_1v_2v_3v_4\:dx &\lesssim \norm{v_1}_{L^2}\norm{v_2}_{L^2}\norm{v_3}_{L^{\infty}}\norm{v_4}_{L^{\infty}}\\
&\lesssim (N_3N_4)^{\frac12}\norm{v_1}_{L^2}\norm{v_2}_{L^2}\norm{v_3}_{L^2}\norm{v_4}_{L^2}.
\end{align*}
Hence, we finally obtain that
\begin{align*}
\mbox{LHS of } \eqref{eq:comparablepf2-1} &\lesssim \sum_{N_1 \sim N_2 \ge N_3\ge N_4} \frac{N^{4s}(N_3N_4)^{\frac12}}{N_1^{2(s+j)}\bra{N_3}^{s+2j-1}\bra{N_4}^{1+s}}\prod_{i=1}^{4}\norm{v_i}_{L^2}\\
&\lesssim N^{4s}\sum_{N_1~N_2}N_1^{-j}\norm{v_1}_{L^2}\norm{v_2}_{L^2} \sum_{N_2 \ge N_3 \ge N_4}\bra{N_3}^{-\frac32(j-1)}\bra{N_4}^{\frac{j}{2}+1}\norm{v_3}_{L^2}\norm{v_4}_{L^2},
\end{align*}
which implies the right-hand side of \eqref{eq:comparablepf2-1}, and hence we complete the proof of lemma.
\end{proof}
Now, we prove that $E_I^4(t)$ is the almost conserved quantity for $t \in (0,1]$. In order to show this, since
\begin{equation}\label{eq:almost}
|E_I^4(t) - E_I^4(0)| \lesssim \left|\int_0^1 \Lambda_5(M_5) \: dt\right|,
\end{equation}
we shall control the quintilinear form. 
\begin{proposition}\label{prop:quinti}
Let $-\frac{j}{2} \le s < 0$ and $N \gg 1$. Then,
\begin{equation*}
\left| \int_{0}^{1} \Lambda_5(M_5)\:dt\right| \lesssim N^{5s}\norm{Iu}_{Y^0}^5.
\end{equation*} 
\end{proposition}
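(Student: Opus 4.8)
The plan is to reduce the space-time integral $\int_0^1 \Lambda_5(M_5)\,dt$ to a sum of $5$-linear estimates indexed by dyadic frequency blocks $N_1\ge N_2\ge N_3\ge N_4\ge N_5$ (with $|\xi_i|\sim N_i$) and the auxiliary size $N_{12}=|\xi_1+\xi_2|$, and to dispose of each block using the pointwise multiplier bounds \eqref{eq:M5a}, \eqref{eq:M5b}, \eqref{eq:M5c} together with the Strichartz estimate \eqref{eq:strichartz} and the embeddings $Y^0\subset C_tL^2_x$, $X^{0,1/3}\subset L^4_{t,x}$. First I would write $\int_0^1\Lambda_5(M_5)\,dt$ via Plancherel as a pentalinear form acting on $\wt{u}$, localize each factor to $|\xi_i|\sim N_i$, and — exactly as in the proof of Proposition \ref{prop:comparable} — convert the $Y^0$ norms into $L^2$-type norms of $v=Iu$ by absorbing the factors $1/m(\xi_i)$, which on the support $|\xi_i|\sim N_i$ are comparable to $N^{-s}\langle N_i\rangle^{s}$; the cumulative gain is $N^{5s}$, matching the right-hand side. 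Since $M_5\equiv 0$ unless $N_1\sim N_2\gtrsim N$, in every regime two of the frequencies are $\gtrsim N$, so the multiplier denominators in \eqref{eq:M5a}--\eqref{eq:M5c} genuinely decay.

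Next I would split into the two structural cases of Lemma \ref{lem:M5}. In Case (a), $N_{12}\sim N_3$: here the bound \eqref{eq:M5a} gives a denominator $(N+N_3)^{2j}(N+N_4)^{2j-1}(N+N_5)$ against a numerator $N_{12}\sim N_3$, so after pairing $v_1 v_2$ in $L^2_{t,x}$ (using $L^4_{t,x}\times L^4_{t,x}$ and \eqref{eq:strichartz}, which costs only a harmless $\langle\lambda_1\rangle,\langle\lambda_2\rangle$ loss absorbed into the $X^{0,1/2}\subset X^{0,1/3}$ slack in $Y^0$) and placing $v_3,v_4$ in $L^\infty_x$ at cost $N_3^{1/2},N_4^{1/2}$ and $v_5$ in $L^2$, the surviving power of $N_3$ is $N_3\cdot N_3^{-2j}\cdot N_3^{1/2}=N_3^{3/2-2j}$, which for $j\ge 3$ is summable, and similarly in $N_4,N_5$; the time integration over $[0,1]$ only contributes a constant. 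In Case (b), $N_3\sim N_4$: I would use \eqref{eq:M5b} when $N_5\ge N_{12}$ and \eqref{eq:M5c} otherwise; in both the denominator now splits the decay evenly as $(N+N_3)^j(N+N_4)^j(N+N_{12})^{\ge j}(N+N_5)^{\ge j-1/2}$, so one distributes the four "large-ish" frequencies across $L^4,L^4,L^4,L^4$ (or $L^\infty,L^2$ on the smallest, as convenient), again using \eqref{eq:strichartz}, and checks that each of the exponents $j,\;j,\;j-1/2,\;j-3/2$ (after the $\langle N\rangle^{1/2}$ losses from Sobolev embedding) is strictly positive for $j\ge 3$, hence geometric-series summable over the dyadic blocks.

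The main obstacle — and the reason $j\ge 3$ is needed rather than $j\ge 2$ — is precisely the bookkeeping in Case (b): after the Sobolev $L^\infty$ losses of $N^{1/2}$ on the lower frequencies, the decay rate $j-3/2$ on the smallest factor must remain positive, which fails at $j=2$; one has to be careful that the $N^{5s}$ factor, with $s$ possibly as negative as $-j/2$, is a pure power of the fixed parameter $N$ and does not interfere with summability in the $N_i$. A secondary technical point is justifying the passage from the $X^{0,1/2}$ half of the $Y^0$ norm to the $L^4_{t,x}$ Strichartz estimate: one splits $\langle\lambda_i\rangle^{1/2}=\langle\lambda_i\rangle^{1/3}\langle\lambda_i\rangle^{1/6}$ and uses the extra $1/6$ power together with the time-cutoff to $[0,1]$, exactly as in the proofs of Lemmas \ref{lem:bilinear2} and \ref{lem:trilinear}; I would also note that the $\ell^2_kL^1_\tau$ component of $Y^0$ only helps (it embeds into $C_tL^2_x$ and hence into $L^\infty_{t,x}$ after Sobolev). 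Assembling the three regimes and summing the convergent geometric series yields the claimed bound $\lesssim N^{5s}\norm{Iu}_{Y^0}^5$.
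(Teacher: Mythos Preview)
Your plan has a genuine gap: it provides no decay whatsoever in the two largest frequencies $N_1\sim N_2$. Look again at the multiplier bounds \eqref{eq:M5a}--\eqref{eq:M5c}: the denominators involve only $N_3,N_4,N_5,N_{12}$, never $N_1$ or $N_2$. This is structural, since $M_5$ is built from $\sigma_4(\xi_3,\xi_4,\xi_5,\xi_1+\xi_2)(\xi_1+\xi_2)$ and sees $\xi_1,\xi_2$ only through their sum. Meanwhile, the factors $1/m(\xi_1),1/m(\xi_2)$ contribute $N^{2s}N_1^{-s}N_2^{-s}\sim N^{2s}N_1^{-2s}$, which for $s<0$ \emph{grows} in $N_1$. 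Your proposed H\"older/Strichartz scheme $L^4\times L^4\times L^\infty\times L^\infty\times L^2$ is scale-invariant in $N_1,N_2$, so after the bookkeeping you are left with $\sum_{N_1\gtrsim N} N_1^{-2s}(\dots)$, which diverges. (Your own sentence ``the multiplier denominators in \eqref{eq:M5a}--\eqref{eq:M5c} genuinely decay'' is correct for $N_3,N_4,N_5$ but irrelevant for the $N_1$-sum; and your expression $1/m(\xi_i)\sim N^{-s}\langle N_i\rangle^{s}$ has the exponents swapped --- it should be $N^{s}\langle N_i\rangle^{-s}$, which is growth in $N_i$.)

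The paper closes this gap not by Strichartz alone but by an $X^{s,-1/2}$--$X^{-s,1/2}$ duality split of the pentilinear form: one writes $\|\partial_x(v_1v_2)\,v_3v_4v_5\|_{L^1_{t,x}}\le \|\partial_x(v_1v_2)\|_{X^{s,-1/2}}\|v_3v_4v_5\|_{X^{-s,1/2}}$, applies the bilinear estimate \eqref{eq:bilinear2} to the first factor and the trilinear estimates of Lemma~\ref{lem:trilinear} to the second. The point of \eqref{eq:bilinear2} is that it extracts the resonance gain $(N_1N_2)^{-1/2}N_{\max}^{1-j}\sim N_1^{-j}$ from the identity $\lambda_1+\lambda_2-\lambda_3=-P_3(k_1,k_2,-k_3)$; this is exactly the decay needed to balance $N_1^{-2s}$, since $-2s-j\le 0$ for $s\ge -j/2$. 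A bare $L^4\times L^4$ pairing cannot see this modulation gain. Incidentally, your diagnosis of why $j\ge 3$ is needed is also off: the restriction in this section is only because $j=1,2$ were handled in \cite{CKSTT1},\cite{Kato}; the proof itself goes through for $j\ge 2$ via Lemma~\ref{lem:trilinear}(b), not because of a ``$j-3/2>0$'' condition in Case~(b).
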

\begin{proof}
We may assume that $\wt{u}$ be nonnegative and let us define $v= Iu$. Then, it suffices to show
\begin{equation}\label{eq:quinti1}
\int_0^1 \Lambda_5\left(\frac{M_5(k_1,k_2,k_3,k_4,k_5)}{m(k_1)m(k_2)m(k_3)m(k_4)m(k_5)}\right) \: dt \lesssim N^{5s}\norm{v}_{Y^0}^5.
\end{equation}
We make a Littlewood-Paley decomposition $v_i = P_{N_i}v$ for dyadic numbers $N_i$, $i=1,2,3,4,5$. Let $|k_i| \sim N_i$ and $|k_j+k_l| = |k_{jl}| \sim N_{jl}$, and without loss of generality, we may assume $N_1 \ge N_2 \ge N_3 \ge N_4 \ge N_5$. From Lemma \ref{lem:M5}, we only consider two cases that $N_3 \sim N_{12} \gtrsim N$ and $N_3 \sim N_4 \gtrsim N$. For the $N_3 \sim N_{12} \gtrsim N$ case, from \eqref{eq:multiplier} and \eqref{eq:M5a}, we have
\begin{align*}
\mbox{LHS of } \eqref{eq:quinti1} &\lesssim N^{5s}\int_0^1 \Lambda_5(|k_{12}|\bra{k_1}^{-s}\bra{k_2}^{-s}\bra{k_3}^{-s-2j}\bra{k_4}^{-s-2j+1}\bra{k_5}^{-s-1})\: dt\\
&\lesssim N^{5s}\sum_{N_1\sim N_2 \ge N_3 \ge N_4 \ge N_5}N_1^{-s}N_2^{-s}N_3^{-s-2j}\bra{N_4}^{-s-2j+1}\bra{N_5}^{-s-1}\norm{\px(v_1v_2)v_3v_4v_5}_{L_{t,x}^1}.
\end{align*}
From the Cauchy-Schwarz inequality, we reduce \eqref{eq:quinti1} to the following two estimates:
\begin{equation}\label{eq:quinti2}
N_1^{-s}N_2^{-s}\norm{\px(v_1v_2)}_{X^{s,-\frac12}} \lesssim N_1^{-2s-j}N_3^{s+\frac12}\norm{v_1}_{Y^0}\norm{v_2}_{Y^0},
\end{equation}
and
\begin{equation}\label{eq:quinti3}
N_3^{-s-2j}\bra{N_4}^{-s-2j+1}\bra{N_5}^{-s-1}\norm{v_3v_4v_5}_{X^{-s,\frac12}} \lesssim N_3^{-2s-j}\bra{N_4}^{-s-2j+\frac32}\bra{N_5}^{-s-\frac12}\prod_{j=3}^5\norm{v_j}_{Y^0}.
\end{equation}
Since $N_1,N_2,N_3 \ge N \gg 1$ and $N_3 \sim N_{12}$, we have \eqref{eq:quinti2} and \eqref{eq:quinti3} directly from \eqref{eq:bilinear2} and \eqref{eq:trilinear1}, respectively. Hence, we obtain
\begin{align*}
\mbox{LHS of } \eqref{eq:quinti1} &\lesssim N^{5s}\sum_{N_1\sim N_2 \ge N_3 \ge N_4 \ge N_5}N_1^{-2s-j}N_3^{\frac12-j}\bra{N_4}^{-s-2j+\frac32}\bra{N_5}^{-s-\frac12}\prod_{i=1}^{5}\norm{v_i}_{Y^0}\\
&\lesssim N^{5s}\norm{v}_{Y^0}^3\sum_{N_1}N_1^{-2s-j}\norm{v_1}_{Y^0}\norm{v_2}_{Y^0},
\end{align*}
which shows \eqref{eq:quinti1} for $ -\frac{j}{2} \le s < 0$.

Now, we consider the $N_3\sim N_4\sim N$ case. We further divide this case into $N_{12} \ge N_5$ and $N_5 \ge N_{12}$ cases. In these cases, since we have the upper bound of $M_5$ as \eqref{eq:M5b} and \eqref{eq:M5c} in Lemma \ref{lem:M5}, \eqref{eq:quinti1} is reduced by the same manner as above that
\begin{equation*}
N_1^{-s}N_2^{-s}\norm{\px(v_1v_2)}_{X^{s,-\frac12}} \lesssim N_1^{-2s-j}\bra{N_{12}}^{s+\frac12}\norm{v_1}_{Y^0}\norm{v_2}_{Y^0},
\end{equation*}
and
\begin{equation*}
N_3^{-s-j}N_4^{-s-j}\bra{N_5}^{-s-j}\norm{\prod_{j=3}^{5}v_i}_{X^{-s-j,\frac12}} \lesssim N_3^{-s}N_4^{-s-j}\bra{N_5}^{-s-j+1}\prod_{j=3}^5\norm{v_j}_{Y^0}
\end{equation*}
for $N_{12} \ge N_5$, and 
\begin{equation*}
N_3^{-s-j}N_4^{-s-j}\bra{N_5}^{-s-j+\frac12}\norm{\prod_{j=3}^{5}v_i}_{X^{-s-j-\frac12,\frac12}} \lesssim N_3^{-s}N_4^{-s-j}\bra{N_5}^{-s-j+1}\prod_{j=3}^5\norm{v_j}_{Y^0}
\end{equation*}
for otherwise. These estimates can be obtained directly from \eqref{eq:bilinear2}, \eqref{eq:trilinear2} and \eqref{eq:trilinear3}, respectively. Hence we obtain
\begin{align*}
\mbox{LHS of } \eqref{eq:quinti1} &\lesssim N^{5s}\sum_{N_1\sim N_2 \ge N_3 \sim N_4 \ge N_5}N_1^{-2s-j}N_3^{-2s-j}\bra{N_5}^{-s-j+1}\bra{N_{12}}^{s+\frac12}\prod_{i=1}^{5}\norm{v_i}_{Y^0}\\
&\lesssim N^{5s}\norm{v}_{Y^0}\sum_{N_1}N_1^{-2s-j}\norm{v_1}_{Y^0}^2\sum_{N_3}N_3^{-2s-j}\norm{v_3}_{Y^0}^2,
\end{align*}
which shows \eqref{eq:quinti1} for $ -\frac{j}{2} \le s < 0$, and hence we complete the proof.
\end{proof}
Finally, we sketch the proof of the global well-posedness by using the same argument in \cite{CKSTT1}. From the scaling property, \eqref{eq:kdv} with initial data $u_0 \in H_0^s(\T)$ is invariant under the following scaling:
\begin{equation*}\label{eq:scaling}
u_{\mu}(t,x) = \mu^{-2j}u(\mu^{-2j-1}t,\mu^{-1}x), \quad u_{0,\mu}(x) =\mu^{-2j}u_0(\mu^{-1}x),
\end{equation*}
with
\[\norm{u_{0,\mu}}_{H_0^s(\T_{\mu})} = \mu^{-s-2j+\frac12}\norm{u_0}_{H_0^s(\T)}.\]
Proposition \ref{prop:comparable} and \eqref{eq:almost} with Proposition \ref{prop:quinti} give 
\begin{equation}\label{eq:global1}
\sup_{0 \le t \le N^{-5s}}\norm{Iu(t)}_{L^2} \lesssim \norm{Iu(0)}_{L^2}.
\end{equation}
Moreover, a direct calculation also gives 
\begin{equation}\label{eq:global2}
\norm{Iu_{\mu}(0,\cdot)}_{L^2} \lesssim \mu^{-s-2j+\frac12}N^{-s}\norm{u_0}_{H_0^s},
\end{equation}
Taking $\mu \ge 1$ satisfying
\[\mu^{-s-2j+\frac12}N^{-s} = \epsilon_0 \ll 1,\]
implies with \eqref{eq:global1} and \eqref{eq:global2} that
\begin{align*}
\sup_{0 \le t \le T}\norm{u(t)}_{H_0^s} &\le \mu^{s+2j-\frac12}\sup_{0 \le t \le \mu^{2j+1}T}\norm{Iu_{\mu}(t)}_{L^2}\\
&\lesssim \mu^{s+2j-\frac12}\norm{Iu_{\mu}(0)}_{L^2} \\
&\lesssim N^{-s}\norm{u_0}_{H_0^s},
\end{align*}
when $\mu^{2j+1}T \le N^{-5s}$. Furthermore, for our global-in-time solution of \eqref{eq:kdv}, we have the uniform time growth bound of $H_0^s$-norm,
\begin{equation}\label{eq:uniform bound}
\sup_{0 \le t \le T} \norm{u(t)}_{H_0^s} \lesssim T^{\frac{2s+4j-1}{10s+16j-7}}\norm{u_0}_{H_0^s},
\end{equation}
for $-j/2 \le s < 0$.

\begin{remark}
In fact, in order to use the scaling argument in the proof of the global well-posedness, we need to consider the $\mu$-periodic function, $\mu \ge 1$. However, all estimates obtained in Section \ref{sec:bi-,tri-} for the global well-posedness do not depend on the $\mu$-scale, even though we prove those estimates under the $\mu$-periodic setting. Hence, we can use the scaling argument without further work. See Appendix \ref{sec:lambda} for the details.
\end{remark}

\section{Nonsqueezing property when $j\ge2$}\label{sec:nonsqueezing}

In this section, we prove the nonsqueezing property of \eqref{eq:kdv} when $j\ge 2$. As mentioned in Section \ref{subsec:Nonsq prop}, we first state the nonsqueezing property of (\ref{eq:truncated equation}) as an application of Theorem \ref{thm:finite dimensional nonsq.}.
\begin{lemma}\label{lem:Nonsqueezing of trun. fow}
Let $N \geq 1$, $ 0 <r <R$, $u_* \in P_{\le N} \HS\left(\MB{T}\right)$, $0< \left|k_0\right| \leq N$, $ z \in \MB{C}$ and $T>0$. Let $S_H^N \left(t\right) :  P_{\le N} \HS \left(\MB{T}\right) \to  P_{\le N} \HS \left(\MB{T}\right)$ be the solution map to \eqref{eq:truncated equation}.
Then 
\begin{equation*}
S_H^N \left(T\right) \left(B_R^N \left( u_* \right)\right) \not \subseteq C_{k_0,r}^N \left(z\right).
\end{equation*}
\end{lemma}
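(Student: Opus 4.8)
The plan is to exhibit $P_{\le N}\HS(\T)$ as a finite-dimensional symplectic vector space on which $S_H^N(T)$ is a globally defined symplectomorphism, and then to read off the ball $B_R^N(u_*)$ and the cylinder $C_{k_0,r}^N(z)$ in a set of Darboux coordinates so that Theorem~\ref{thm:finite dimensional nonsq.} applies verbatim. Writing $u=\frac1{2\pi}\sum_{0<|k|\le N}\wh u(k)e^{ikx}$ with the reality constraint $\wh u(-k)=\ol{\wh u(k)}$, the real and imaginary parts of $\wh u(k)$, $0<k\le N$, identify $P_{\le N}\HS(\T)$ with $\R^{2N}$. A direct Fourier-side computation of \eqref{eq:symplectic form} shows that, after a rescaling $q_k:=c_k\,\mathrm{Re}\,\wh u(k)$, $p_k:=c_k\,\mathrm{Im}\,\wh u(k)$ with the appropriate constant $c_k\sim|k|^{-1/2}$ (chosen so that the symplectic form and the homogeneous Sobolev norm become standard simultaneously), one has $\omega_{-\frac12}=\sum_{0<k\le N}dq_k\wedge dp_k$; in particular the restriction of $\omega_{-\frac12}$ to $P_{\le N}\HS$ is nondegenerate and $(q_k,p_k)$ are global Darboux coordinates. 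In these same coordinates the homogeneous norm $\norm{\,|k|^{-1/2}\wh u}_{\ell^2_k}$ is exactly $\big(\sum_{0<k\le N}(q_k^2+p_k^2)\big)^{1/2}$, so $B_R^N(u_*)$ contains the round ball of radius $R$ about (the image of) $u_*$, while — precisely because $C_{k_0,r}^N(z)$ is defined with the weight $|k_0|^{-1/2}$ — it is exactly a round cylinder in the symplectic coordinate plane $(q_{k_0},p_{k_0})$ of radius no larger than $r$; here the hypothesis $0<|k_0|\le N$ is used so that the $k_0$-mode genuinely lives in the truncated phase space.

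Next I would verify the two structural properties of $S_H^N(T)$. On $P_{\le N}\HS$, the right-hand side of \eqref{eq:truncated equation} is a polynomial vector field that maps $P_{\le N}\HS$ into itself (the dispersive term acts diagonally on Fourier modes, the nonlinear term is explicitly frequency-truncated and has zero mean), hence it generates a smooth local flow. Moreover $\int_\T u\,dx$ and $\int_\T u^2\,dx$ are conserved along it: the first is immediate, and for the second one uses $\int_\T u\,\px^{2j+1}u\,dx=0$ together with $\int_\T u\,P_{\le N}(\px(u^2))\,dx=\int_\T (P_{\le N}u)\,\px(u^2)\,dx=\int_\T u\,\px(u^2)\,dx=-\tfrac13\int_\T \px(u^3)\,dx=0$, valid since $u=P_{\le N}u$ on the truncated space. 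Thus the $L^2$-norm is a priori bounded, the flow is global, and $S_H^N(T)$ is a diffeomorphism of $P_{\le N}\HS$. Finally, exactly as in the computation following \eqref{eq:symplectic form}, \eqref{eq:truncated equation} is the Hamiltonian system generated by $H_N$ with respect to $\omega_{-\frac12}$, so by Liouville's theorem (the time-$T$ map of a Hamiltonian vector field is symplectic) $S_H^N(T)$ is a symplectomorphism of $(P_{\le N}\HS,\omega_{-\frac12})$.

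Putting these together finishes the proof: suppose, toward a contradiction, that $S_H^N(T)(B_R^N(u_*))\subseteq C_{k_0,r}^N(z)$. Restricting to the round ball of radius $R$ contained in $B_R^N(u_*)$ and passing to the Darboux coordinates above, we obtain a symplectic embedding of a ball of radius $R$ into a cylinder over the $k_0$-coordinate plane of radius at most $r$; Theorem~\ref{thm:finite dimensional nonsq.} then forces $r\ge R$, contradicting $r<R$. Hence $S_H^N(T)(B_R^N(u_*))\not\subseteq C_{k_0,r}^N(z)$. The only genuinely delicate point is the bookkeeping in the first paragraph — pinning down the Darboux coordinates and checking that the PDE ball and cylinder correspond to genuine (round) balls and cylinders, which is exactly where the choice of symplectic form \eqref{eq:symplectic form} and the weight $|k|^{-1/2}$ in the definition of $C_{k_0,r}^N(z)$ pay off; everything else is a direct invocation of finite-dimensional nonsqueezing and of the conservation of $\int u^2$.
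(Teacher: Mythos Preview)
Your proof is correct and follows exactly the approach the paper intends: the paper does not give a proof of this lemma at all, merely stating that ``this flow is the finite dimensional symplectic map, so we can apply Theorem~\ref{thm:finite dimensional nonsq.} directly.'' You have carefully filled in the details the paper omits --- setting up Darboux coordinates for $\omega_{-\frac12}$ on $P_{\le N}\HS$, verifying that the PDE ball contains a genuine round ball and that the weighted cylinder is a genuine round cylinder, and checking globality and symplecticity of $S_H^N(T)$ via the $L^2$ conservation and the Hamiltonian structure $H_N$ --- so your write-up is, if anything, more complete than the paper's.
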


Our task, in this section, is to prove the closeness between two flows, $S_H\left(t\right)$ and $S_H^N\left(t\right)$. Since there are two differences between two flows, initial data and solution map, we can show the closeness by proving the following propositions, respectively:
\begin{proposition}\label{prop:approx}
Let $T>0$, and $N \gg 1$. Let $u_0, \underline u_0 \in \HS$ be such that $P_{\le2N} u_0 = P_{\le 2N}\underline u_0$. Then we have
\[\sup_{|t| \le T} \norm{P_{\le N}(S_{H}(t)u_0 - S_{H}(t)\underline u_0)}_{\HS} \leq C\Big(T,\left\|u_0\right\|_{\HS}\left\|,\underline u_0\right\|_{\HS}\Big) N^{-\sigma}\]
for some $\sigma>0$.
\end{proposition}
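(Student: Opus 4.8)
The plan is to set $u(t)=S_H(t)u_0$, $\underline u(t)=S_H(t)\underline u_0$ and study the difference $w=u-\underline u$, which solves
\[\pt w + (-1)^{j+1}\px^{2j+1}w + \frac12\px\big((u+\underline u)w\big)=0,\qquad w(0)=u_0-\underline u_0.\]
By the global well-posedness theory for \eqref{eq:kdv} (for $j\ge 3$ the theorem proved in Section~\ref{sec:global}, for $j=2$ Kato \cite{Kato}) together with the associated uniform-in-time bound in $\HS$ as in \eqref{eq:uniform bound}, there is a constant $A=A(T,\|u_0\|_{\HS},\|\underline u_0\|_{\HS})$ with $\sup_{|t|\le T}(\|u(t)\|_{\HS}+\|\underline u(t)\|_{\HS}+\|w(t)\|_{\HS})\le A$. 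The structural observation I would exploit is that, since $P_{\le 2N}u_0=P_{\le 2N}\underline u_0$, the function $g:=P_{\le 2N}w$ has vanishing data $g(0)=0$, while $P_{\le N}w=P_{\le N}g$; hence it suffices to control $\sup_{|t|\le T}\|g(t)\|_{\HS}$. Applying $P_{\le 2N}$ to the equation for $w$ and decomposing $w=g+P_{>2N}w$ turns this into a \emph{forced linear} equation for $g$ with zero initial data,
\[\pt g+(-1)^{j+1}\px^{2j+1}g=-\frac12 P_{\le 2N}\px\big((u+\underline u)g\big)-\frac12 P_{\le 2N}\px\big((u+\underline u)P_{>2N}w\big),\]
in which the last term is the only genuinely high-frequency contribution.

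Next I would run a Duhamel/contraction argument for $g$ on short time intervals. Cover $[-T,T]$ by $M=M(T,A)$ intervals $I_\ell=[t_\ell,t_{\ell+1}]$ of a fixed length $\delta=\delta(A)$ independent of $N$, on which the local theory in $Y^{-1/2}$ holds and $\|u\|_{Y^{-1/2}(I_\ell)},\|\underline u\|_{Y^{-1/2}(I_\ell)},\|w\|_{Y^{-1/2}(I_\ell)}\lesssim A$. On each $I_\ell$, the linear Duhamel estimate into $Y^{-1/2}$, the bilinear estimate \eqref{eq:bilinear1} of Hirayama (with the standard time-localization gain $\delta^{\theta}$, $\theta>0$), and the refined bilinear estimate \eqref{eq:bilinear3} give, schematically,
\[\|g\|_{Y^{-1/2}(I_\ell)}\le C_0\|g(t_\ell)\|_{\HS}+C_0\delta^{\theta}A\|g\|_{Y^{-1/2}(I_\ell)}+C_0 A^2\sum_{N'\gtrsim N}(N')^{1-j},\]
where the last sum comes from the high$\times$high$\to$low part of $P_{\le 2N}\px((u+\underline u)P_{>2N}w)$: the output frequency $\lesssim N$ forces the two inputs to have comparable frequencies $\gtrsim N$, and \eqref{eq:bilinear3} contributes the decay factor $N_{max}^{1-j}$. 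For $j\ge 2$ this sum is $\lesssim N^{1-j}$. Choosing $\delta$ so small that $C_0\delta^{\theta}A\le\frac12$, solving for $\|g\|_{Y^{-1/2}(I_\ell)}$, using $\|g(t_{\ell+1})\|_{\HS}\le\|g\|_{Y^{-1/2}(I_\ell)}$ (from $Y^s\subset C_tH^s$), and iterating over the $M$ intervals starting from $g(0)=0$ yields $\sup_{|t|\le T}\|g(t)\|_{\HS}\lesssim (2C_0)^{M}A^{2}\,N^{1-j}$, i.e.\ the proposition with $\sigma=j-1>0$ and $C(T,\|u_0\|_{\HS},\|\underline u_0\|_{\HS})=(2C_0)^{M}A^2$.

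I expect the main obstacle to be the high-frequency term $P_{\le 2N}\px((u+\underline u)P_{>2N}w)$: both factors are only bounded, not small, in $\HS$, and the low output frequency leaves no room to redistribute the derivative by Sobolev embedding, so a naive estimate gives no decay in $N$ at all — this is precisely the obstruction that forced the use of the Miura transform for KdV in \cite{CKSTT3}. The resolution here is exactly Lemma~\ref{lem:algebra}(a): on the high$\times$high$\to$low interaction one has $|\tau-k^{2j+1}|\gtrsim|k_1k_2k_3|\max(|k_1|,|k_2|,|k_3|)^{2j-2}$, and feeding this into the $X^{s,b}$ machinery produces the gain $N_{max}^{1-j}$ recorded in \eqref{eq:bilinear3}, which is a genuine decay as soon as $j\ge 2$. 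A secondary bookkeeping point — ensuring the final constant depends only on $T$ and the data norms (through $A$, $\delta$ and $M$) and not on $N$ — is automatic once $\delta$ and $M$ are chosen in terms of $A$ alone, and the endpoint-$X^{s,1/2}$ time localization used to obtain the $\delta^{\theta}$ gain for the $Y^s,Z^s$ spaces is standard.
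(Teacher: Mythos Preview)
Your argument is correct and reaches the same conclusion, but the organization differs from the paper's. Both proofs rest on the same key ingredient --- the refined bilinear estimate \eqref{eq:bilinear3} with its $N_{\max}^{1-j}$ gain --- to show that the high-frequency interactions contribute only $O(N^{1-j})$. The paper, however, follows the template of \cite{CKSTT3} more closely: it first reduces to a single-interval lemma (Lemma~\ref{prop:approx1}) in which the low-frequency parts $u_{lo}=P_{\le M}u$ and $\underline u_{lo}=P_{\le M}\underline u$ are treated \emph{separately}, each shown to satisfy a perturbed nonlinear equation up to an $O((N')^{-\sigma})$ error, and then ``standard local well-posedness theory'' (i.e.\ Lipschitz stability of the flow) is invoked as a black box to compare them; the passage from Lemma~\ref{prop:approx1} to Proposition~\ref{prop:approx} is done by iteration with a receding threshold $M\in[N'-(N')^{1/2},N']$, exactly as in Section~5 of \cite{CKSTT3}. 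You instead work directly with $g=P_{\le 2N}(u-\underline u)$, derive an exact forced linear equation for $g$ with a \emph{fixed} threshold, and run an explicit discrete Gr\"onwall iteration. This bypasses the receding-threshold bookkeeping and makes the role of the time-localization gain $\delta^{\theta}$ explicit rather than hiding it inside the LWP stability statement. One minor imprecision worth noting: the forcing term $P_{\le 2N}\px\big((u+\underline u)P_{>2N}w\big)$ is not literally a high$\times$high$\to$low interaction --- the $(u+\underline u)$ factor may sit at low frequency when $P_{>2N}w$ sits just above $2N$ --- but since one input is always at frequency $>2N$ you still have $N_{\max}\gtrsim N$, so \eqref{eq:bilinear3} gives the claimed decay regardless.
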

\begin{proposition}\label{prop:approx2}
Let $T>0$ and $N\gg1$. Let $u_0 \in \HS$ have Fourier transform supported in the range $\left|k\right| \le N$. Then we have
\[\sup_{|t| \le T} \norm{P_{\le N^{1/2}}(S_{H}(t)u_0 - S_{H}^{N}(t)u_0)}_{\HS} \leq C\Big(T,\left\|u_0\right\|_{\HS}\Big)N^{-\sigma}.\]
for some $\sigma >0$.
\end{proposition}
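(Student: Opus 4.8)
The plan is to compare $u(t):=S_H(t)u_0$ and $\underline u(t):=S_H^N(t)u_0$ in the $Y^{-1/2}$-restriction norm along a chain of short time intervals and to run a Gronwall-type iteration, the point being that the mismatch between \eqref{eq:kdv} and \eqref{eq:truncated equation} produces, on each interval, an error of size $N^{-\sigma}$; the essential input is the frequency gain $N_{max}^{1-j}$ in \eqref{eq:bilinear3}, available precisely because $j\ge2$ (for $j=1$ one is forced through the Miura transform, cf.\ \cite{CKSTT3}). First I would record the uniform-in-$N$ bounds. The global theory of Section \ref{sec:global}, together with its analogue for \eqref{eq:truncated equation}, gives $\sup_{|t|\le T}\big(\norm{u(t)}_{\HS}+\norm{\underline u(t)}_{\HS}\big)\le C_0=C_0(T,\norm{u_0}_{\HS})$ with $C_0$ independent of $N$ (the $I$-method estimates are insensitive to removing high-frequency nonlinear interactions). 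Combined with the local theory built on \eqref{eq:bilinear1}, there is $\delta=\delta(C_0)>0$ so that, setting $t_m=m\delta$, $I_m=[t_m,t_{m+1}]$, one partitions $[0,T]$ into $M\lesssim T/\delta$ intervals on each of which $\norm{u}_{Y^{-1/2}(I_m)}+\norm{\underline u}_{Y^{-1/2}(I_m)}\lesssim C_0$; crucially $M$ depends only on $T$ and $\norm{u_0}_{\HS}$, not on $N$.

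Next, write $w=u-\underline u$, so that $w(0)=0$ and
\[\pt w+(-1)^{j+1}\px^{2j+1}w=-\tfrac12 P_{>N}\px(u^2)-\tfrac12 P_{\le N}\px\big(w(u+\underline u)\big).\]
I would first show that the high part of $u$ stays small: since $\wh{u_0}$ is supported in $|k|\le N$, the function $P_{>N}u$ solves the linear equation forced only by $-\tfrac12 P_{>N}\px(u^2)$ with zero data, and applying \eqref{eq:bilinear3} together with the localization $|k|>N$ (which forces $N_{max}\gtrsim N$) gives $\norm{P_{>N}\px(u^2)}_{Z^{-1/2}(I_m)}\lesssim N^{1-j+\varepsilon}C_0^2$; running Duhamel over the $M$ intervals and using $Y^s\subset C_tH^s$ yields $\sup_{|t|\le T}\norm{P_{>N}u(t)}_{\HS}\lesssim MN^{1-j+\varepsilon}C_0^2\lesssim N^{-\sigma_1}$ for some $\sigma_1>0$. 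Now decompose $w(u+\underline u)=(P_{\le N}w)(u+\underline u)+(P_{>N}u)(u+\underline u)$ in the difference equation: by \eqref{eq:bilinear1} (or \eqref{eq:bilinear3}) the second summand has $Z^{-1/2}(I_m)$-norm $\lesssim N^{-\sigma_1}C_0$, while the first is Lipschitz in $P_{\le N}w$ with constant $\lesssim\delta^{\theta}C_0$ after the standard $X^{s,b}$ time-localization, hence is absorbed into the left-hand side by shrinking $\delta$. Since $P_{\le N}\big(P_{>N}\px(u^2)\big)=0$, the Duhamel estimate on $I_m$ reads
\[\norm{P_{\le N}w}_{Y^{-1/2}(I_m)}\lesssim \norm{P_{\le N}w(t_m)}_{\HS}+N^{-\sigma_1}C_0,\]
and since $w(0)=0$, induction on $m$ gives $\sup_{|t|\le T}\norm{P_{\le N}w(t)}_{\HS}\lesssim MN^{-\sigma_1}C_0\lesssim N^{-\sigma}$; in particular $\sup_{|t|\le T}\norm{P_{\le N^{1/2}}w(t)}_{\HS}\lesssim N^{-\sigma}$, which is the claim (the coarser cutoff $N^{1/2}$ merely leaves room for the $\varepsilon$-losses in the dyadic summations and is all that is needed for the nonsqueezing argument).

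The hard part will be the nonlinear feedback into low frequencies: a priori nothing keeps the $\le N$ part of $w$ from being regenerated through resonant interactions, and on KdV this is exactly the obstruction that forces the detour through the Miura map in \cite{CKSTT3}. What makes the direct argument close here is the combination of (i) the gain $N_{max}^{1-j}$, $j\ge2$, in \eqref{eq:bilinear3}, which simultaneously keeps $P_{>N}u$ small and turns the untruncated high-frequency term into a genuine error, and (ii) the fact that the number of time steps $M$ does not depend on $N$, so the accumulated error $MN^{-\sigma_1}$ still decays. A secondary point requiring care is that every constant — $C_0$, $\delta$, $M$ — must be chosen independently of $N$; for $C_0$ this rests on the global bound for \eqref{eq:truncated equation} not degenerating as $N\to\infty$, which holds since the almost-conservation argument of Section \ref{sec:global} only ever discards nonlinear interactions.
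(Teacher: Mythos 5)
Your proposal is correct and follows essentially the same route as the paper: the whole argument rests on the $N_{max}^{1-j}$ gain in \eqref{eq:bilinear3} (valid since $j\ge2$) to declare every nonlinear interaction involving a frequency $>N$ an error in $Z^{-\frac12}$, followed by the standard local difference estimates iterated over an $N$-independent number of time intervals via the uniform bounds \eqref{eq:uniform bound}. Writing the equation for $w=u-\underline u$ and separately tracking $P_{>N}u$ is just a rephrasing of the paper's decomposition into ``truncated equation plus error term'' as in Lemma \ref{prop:approx1}, so no genuinely new ingredient or gap is involved.
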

\begin{remark}
Proposition \ref{prop:approx} tells that change in the initial data at frequencies $\ge 2N$ does not significantly affect the solution at frequencies $\le N$ in the $\HS$.
\end{remark}
Now, we first prove Proposition \ref{prop:approx} by using estimates in Section \ref{sec:bi-,tri-}. We use the same argument in \cite{CKSTT3}. From the local well-posedness theory and the uniform bounds \eqref{eq:uniform bound}, Proposition \ref{prop:approx} can be reduced to the following lemma:
\begin{lemma}\label{prop:approx1}
Let $N' \gg 1$ and $u_0,\underline u_0 \in H_0^{-\frac12}$ satisfying $P_{\le N'}u_0 = P_{\le N'}\underline u_0$. Then if $T'$ is sufficiently small depending on $\norm{u_0}_{H_0^{-\frac12}}$ and $\norm{\underline u_0}_{H_0^{-\frac12}}$, we have
\begin{equation*}
\sup_{|t| \le T'} \norm{P_{\le N' - (N')^{1/2}}(S_{H}(t)u_0 - S_{H}(t)\underline u_0)}_{\HS} \leq  C\Big(\norm{u_0}_{H_0^{-\frac12}}, \norm{\underline u_0}_{H_0^{-\frac12}}\Big)\left(N'\right)^{-\sigma},
\end{equation*}
for some $\sigma > 0$.
\end{lemma}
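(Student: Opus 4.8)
The plan is to set up a standard difference-equation estimate in the $X^{s,b}$ spaces adapted to the higher-order dispersion, exploiting the strong smoothing built into the bilinear estimate \eqref{eq:bilinear3}. Write $u = S_H(t)u_0$, $\underline u = S_H(t)\underline u_0$, and $w = u - \underline u$. The crucial observation is that since $P_{\le N'}u_0 = P_{\le N'}\underline u_0$, the difference $w_0 = u_0 - \underline u_0$ is supported in $|k| > N'$; hence at time zero $P_{\le N' - (N')^{1/2}}w_0 = 0$, and we only need to track how much low-frequency content $w$ can generate dynamically through the nonlinearity. From the Duhamel formula, $w$ solves
\[
\pt w + (-1)^{j+1}\px^{2j+1}w + \tfrac12\px\big((u+\underline u)w\big) = 0, \qquad w(0) = w_0,
\]
so for the low-frequency projection $P_{\le N'-(N')^{1/2}}w$ the linear evolution contributes nothing and we are left with estimating $P_{\le N'-(N')^{1/2}}\int_0^t S(t-t')\px\big((u+\underline u)w\big)\,dt'$ in $\HS$.

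The key point is a frequency-interaction analysis: to produce output frequency $|k_3| \le N' - (N')^{1/2}$ from a product of a factor at frequency $|k_1|$ (coming from $u+\underline u$) and a factor at frequency $|k_2|$ (coming from $w$), with $k_1 + k_2 = k_3$, we must have either $|k_2| > N'$ (so $|k_1| \gtrsim N'$ as well, since $|k_1| \ge |k_2| - |k_3| \gtrsim (N')^{1/2}$ — in fact $|k_1| \sim |k_2| \gtrsim N'$), or else $w$ itself already carries frequency $\le$ something comparable; running a bootstrap/continuity argument on $\norm{P_{\le N'-(N')^{1/2}}w}_{Y^{-1/2}}$ over a short time $T'$, every nonlinear interaction that feeds the low output either has $N_{max} \gtrsim N'$ or involves an already-controlled low-frequency piece of $w$. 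Invoking Lemma with estimate \eqref{eq:bilinear3}, each such interaction gains a factor $N_{max}^{1-j} \lesssim (N')^{1-j}$, which for $j \ge 2$ is the decaying gain $(N')^{-\sigma}$ with $\sigma = j-1 > 0$ (possibly reduced slightly to absorb logarithmic dyadic summation). The $Y^{-1/2}$ and $Z^{-1/2}$ norms of $u$, $\underline u$, $w$ on a time interval of length $T'$ are controlled by $\norm{u_0}_{\HS}$, $\norm{\underline u_0}_{\HS}$ via the local well-posedness theory of \cite{Hirayama}, with $T'$ chosen small depending on these norms.

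Concretely, the steps are: (i) fix a small $T'$ (the local existence time for data of the given size) and recall the a priori bounds $\norm{u}_{Y^{-1/2}}, \norm{\underline u}_{Y^{-1/2}} \lesssim \norm{u_0}_{\HS} + \norm{\underline u_0}_{\HS}$ on $[0,T']$; (ii) write the Duhamel formula for $P_{\le N'-(N')^{1/2}}w$ and dyadically decompose the bilinear term $\px\big((u+\underline u)w\big)$; (iii) split the sum according to whether $N_{max} \gtrsim N'$ or $N_{max} \ll N'$ — in the former case apply \eqref{eq:bilinear3} to harvest $N_{max}^{1-j} \lesssim (N')^{1-j}$, and in the latter case observe the interaction can only involve the already-low-frequency part $P_{\lesssim N'}w$, which lets one close a Gronwall/bootstrap inequality of the form
\[
\norm{P_{\le N'-(N')^{1/2}}w}_{Y^{-1/2}([0,T'])} \lesssim (N')^{-\sigma}\big(\norm{u_0}_{\HS}+\norm{\underline u_0}_{\HS}\big) + C T' \norm{P_{\lesssim N'}w}_{Y^{-1/2}([0,T'])};
\]
(iv) absorb the last term using smallness of $T'$, or iterate, and conclude via the embedding $Y^{-1/2} \hookrightarrow C_t H^{-1/2}_0$. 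The main obstacle is step (iii): one must verify carefully that the ``low-output, low-input'' interactions genuinely cannot move mass across the gap of width $(N')^{1/2}$ without paying the $N_{max}^{1-j}$ price — that is, that the buffer zone $[N'-(N')^{1/2}, N']$ is wide enough that any interaction landing below it either stays entirely below $N'$ (hence is iteratively controlled) or involves a frequency $\gtrsim N'$ on one of the inputs. This is a purely combinatorial frequency-support check, but it is where the choice of the truncation parameters and the exponent $\sigma$ are pinned down, and it is the heart of why $j \ge 2$ (strictly better modulation than KdV) is exactly what makes the argument work without the Miura transform.
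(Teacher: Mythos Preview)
Your approach is correct and essentially matches the paper's: both hinge on the $N_{max}^{1-j}$ gain from \eqref{eq:bilinear3} to show that any interaction with a factor at frequency $\gtrsim N'$ contributes $O((N')^{1-j})$ in $Z^{-1/2}$, and then close by contraction on the local existence interval. The paper packages it slightly differently---fixing $M\in[N'-(N')^{1/2},N']$, it shows that $u_{lo}:=P_{\le M}u$ and $\underline u_{lo}$ each satisfy the \emph{same} truncated equation $(\pt+(-1)^{j+1}\px^{2j+1})u_{lo}=P_{\le M}F(u_{lo},u_{lo})+O_{Z^{-1/2}}((N')^{-\sigma})$ (every cross term containing a $u_{hi}$ factor falls directly to \eqref{eq:bilinear3}) and then invokes LWP Lipschitz stability with the common initial data $u_{lo}(0)=\underline u_{lo}(0)$; this unwinds to exactly your difference-equation bootstrap once you note that in your case $N_{\max}\ll N'$ the $w$-factor is automatically below the cutoff (so the right side of your inequality should carry $P_{\le N'-(N')^{1/2}}w$, matching the left, rather than the looser $P_{\lesssim N'}w$), and that the absorbing smallness is the LWP contraction constant rather than an explicit power of $T'$.
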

\begin{remark}
In \cite{CKSTT3}, Colliander et al. explain why proving this proposition is enough to prove Proposition \ref{prop:approx}. See the section 5 in \cite{CKSTT3}.
\end{remark}

\begin{remark}
The proof of  Lemma \ref{prop:approx1} is easier and simpler than the proof of the Proposition 5.1 in \cite{CKSTT3}. Since we can obtain the good frequency decay bound from the bilinear estimate \eqref{eq:bilinear3}, no more techniques such as the Miura transform in \cite{CKSTT3} is required for our analysis as mentioned in Section \ref{subsec:Nonsq prop}. Moreover, since the right-hand side of \eqref{eq:bilinear3} has  the coefficient depending only on $N_{max}$, it is sufficient to separate $u$ into low and high frequencies different from the argument in \cite{CKSTT3}.
\end{remark}

To simplify our argument, consider
\begin{equation}\label{eq:kdv1}
u_t + (-1)^{j+1} \px^{2j+1}u = F(u,u),
\end{equation}
where $j>1$ and $F(u,v) = \frac12\px(uv)$ with 
\[F(u,v)\wh{\:}(k) = -\frac12\sum_{\substack{k_1,k_2 \in \Z^{\ast}\\k_1+k_2=k}} ik \wh{u}(k_1)\wh{v}(k_2).\]
Note that in contrast with analysis in \cite{CKSTT3}, since we control the quadratic form, the resonant term $F_0$ as in \cite{CKSTT3} is not considered.

\begin{proof}[Proof of Lemma \ref{prop:approx1}]
From the local well-posedness theory of \eqref{eq:kdv}, we have the local estimates
\begin{equation}\label{eq:small_1}
\norm{u}_{Y^{-\frac12}} + \norm{\underline u}_{Y^{-\frac12}} \le C,
\end{equation}
by choosing the sufficiently small time $T'$ depending on the $H_0^{-\frac12}$-norms of $u_0$ and $\underline u_0$.

%We split the solution $u$ and $\underline u$ into the two portions using the following argument:
%For the frequency interval $[N'-\left(N'\right)^{\frac12},N']$, we can divide this interval into $O(\left(N'\right)^{1/4})$ intervals uniformly, and then by the  orthogonality and pigeon-hole principle, there exists at least one interval form of $[M,M+\left(N'\right)^{1/4}]$ such that 
%\begin{equation}\label{eq:sub-small}
%\norm{(P_{\le M+\left(N'\right)^{1/4}}-P_{\le M})u}_{Y^{-\frac12}} + \norm{(P_{\le M+\left(N'\right)^{1/4}}-P_{\le M})\underline u}_{Y^{-\frac12}} \lesssim \left(N'\right)^{-\sigma}.
%\end{equation}
%We fix such $M$ and separate $u$ as
Let $M \in \left[N' - \left(N'\right)^{\frac{1}{2}}, N'\right]$ be an integer. We separate $u$ as
\[u = u_{lo} +u_{hi},\]
where 
\[u_{lo} := P_{\le M}u, \hspace{1em} u_{hi}:= (1-P_{\le M})u.\]
%From \eqref{eq:small} and \eqref{eq:sub-small}, we have
From \eqref{eq:small_1}, we have
\begin{equation}\label{eq:split}
\norm{u_{lo}}_{Y^{-\frac12}}, ~ \norm{u_{hi}}_{Y^{-\frac12}} \le C.  %\hspace{1em} \norm{u_{med}}_{Y^{-\frac12}} \lesssim \left(N'\right)^{-\sigma}.
\end{equation}
We also split $\underline u$ and obtain the similar result as \eqref{eq:split} for $\underline u$. Applying $P_{\le M}$ to \eqref{eq:kdv1}, $u_{lo}$ obeys the equation
\begin{equation}\label{eq:low freq eq}
(\pt + \px^3)u_{lo} = P_{\le M}F(u,u).
\end{equation}
%and in order to prove our lemma, we need to analyze \eqref{eq:low freq eq}. 
%Since the nonlinearity in (\ref{eq:low freq eq}) is mixed $u_{lo}$, $u_{med}$ and $u_{hi}$ together, we need to divide $F(u,u)$ into several cases. 
In order to control the right-hand side of \eqref{eq:low freq eq}  except for $F(u_{lo},u_{lo})$, define the \emph{error terms} to be any quantity with $Z^{-\frac12}$-norm of $O(\left(N'\right)^{-\sigma})$. From \eqref{eq:bilinear3}, we can easily know that all terms except for $P_{\le M}F(u_{lo},u_{lo})$ are error terms. 
%Indeed, if the nonlinear term contains $u_{med}$, then from the bilinear estimate \eqref{eq:bilinear1} and \eqref{eq:split}, the $Z^{-1/2}$-norm of the nonlinear term bounded by $N^{-\sigma}$. Also, otherwise, from \eqref{eq:bilinear3}, we have $\left(N'\right)^{1-j}$ decay bound. Thus, $u_{lo}$ obeys the equation
Indeed, if the nonlinear term contains $u_{hi}$, then from the bilinear estimate \eqref{eq:bilinear3}, we have $M^{1-j}$ decay bound. Thus, $u_{lo}$ obeys the equation
\begin{equation}\label{eq:error1}
(\pt + \px^3)u_{lo} = P_{\le M}F(u_{lo},u_{lo}) + \mbox{error term}.
\end{equation} 
By the same manner, the function $\underline u_{lo}$ obeys the equation
\begin{equation}\label{eq:error2}
(\pt + \px^3)\underline u_{lo} = P_{\le M}F(\underline u_{lo},\underline u_{lo}) + \mbox{error term}.
\end{equation}
 
Since $u_{lo} = \underline u_{lo}$, we have from the standard local well-posedness theory that
\[\norm{u_{lo} - \underline u_{lo}}_{Y^{-\frac12}} \lesssim \left(N'\right)^{-\sigma},\]
which implies Lemma \ref{prop:approx1} by $Y^{s} \subset C_{t}H^s$.
\end{proof}
For Proposition \ref{prop:approx2}, we use the similar argument as in the proof of Proposition \ref{prop:approx}. The point of the proof of Proposition \ref{prop:approx} is to show \eqref{eq:error1} and \eqref{eq:error2}, and since $P_{\le N}P_{\le 2N} = P_{\le N}$, it suffices to obtain 
\[(\pt + \px^3)u_{lo} = P_{\le M}F(u_{lo},u_{lo}) + \mbox{error term}\]
and
\[(\pt + \px^3)v_{lo} = P_{\le M}F(v_{lo},v_{lo}) + \mbox{error term}.\]
However, those can be easily obtained by the same argument as in the proof of Lemma \ref{prop:approx1}. We omit the detailed proof of Proposition \ref{prop:approx2}.

 As the final stage to show the nonsqueezing property, we combine Lemma \ref{lem:Nonsqueezing of trun. fow}, Proposition \ref{prop:approx} and Proposition \ref{prop:approx2}. First of all, we show the following proposition:

\begin{proposition}\label{prop:main approx}
Let $k_0 \in \MB{Z}^*$, $T>0$, $A>0$, and $ 0 < \varepsilon \ll 1$. Then there exists a frequency $N_0 = N_0\left(k_0, T, \varepsilon, A\right) \gg \left|k_0\right|$ such that
\begin{equation*}
\left|k_0\right|^{-1/2} \left|\left(\SK\left(T\right)u_0\right)\hat{}\left({k_0}\right) - \left(\SN \left(T\right) u_0\right) \hat{}\left(k_0\right)\right| \ll \varepsilon
\end{equation*}
for all $N \geq N_0$ and all $u_0 \in B_A^N \left(0\right)$.
\end{proposition}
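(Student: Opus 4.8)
The plan is to deduce Proposition \ref{prop:main approx} from the approximation estimate of Proposition \ref{prop:approx2} together with two elementary reductions; the nonlinear analysis has already been carried out, so what remains is essentially bookkeeping about constants and frequency ranges. First I would pass from the pointwise-in-frequency quantity to an $\HS$-norm. For any $w \in \HS$ and $k_0 \in \MB{Z}^{\ast}$, the definition \eqref{eq:H^s space} together with $\bra{k_0} \sim |k_0|$ for $|k_0| \ge 1$ gives $|k_0|^{-1/2}|\wh{w}(k_0)| \lesssim \bra{k_0}^{-1/2}|\wh{w}(k_0)| \lesssim \norm{w}_{\HS}$. Moreover, if $N_0$ is chosen so that $N_0 \ge |k_0|^2$, then $|k_0| \le N^{1/2}$ for every $N \ge N_0$, so the truncation $P_{\le N^{1/2}}$ leaves the $k_0$-th Fourier coefficient unchanged. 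Applying these two observations to $w := \SK(T)u_0 - \SN(T)u_0$ yields
\[
|k_0|^{-1/2}\left|\left(\SK\left(T\right)u_0\right)\hat{}\left(k_0\right) - \left(\SN\left(T\right)u_0\right)\hat{}\left(k_0\right)\right| \lesssim \norm{P_{\le N^{1/2}}\left(\SK(T)u_0 - \SN(T)u_0\right)}_{\HS}.
\]

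Next I would invoke Proposition \ref{prop:approx2}. For $u_0 \in B_A^N(0)$ the Fourier transform of $u_0$ is supported in $|k| \le N$ and $\norm{u_0}_{\HS} \le A$, so Proposition \ref{prop:approx2} applies and gives, at time $t=T$,
\[
\norm{P_{\le N^{1/2}}\left(\SK(T)u_0 - \SN(T)u_0\right)}_{\HS} \le C\big(T,\norm{u_0}_{\HS}\big)\,N^{-\sigma} \le C(T,A)\,N^{-\sigma}
\]
for some $\sigma > 0$, where in the last step I use that the constant furnished by Proposition \ref{prop:approx2} (which ultimately comes from the local theory and the bilinear estimate \eqref{eq:bilinear3}) may be taken nondecreasing in its second argument, hence uniform over the ball $B_A^N(0)$. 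Given $k_0, T, \varepsilon, A$, I would then fix $N_0 = N_0(k_0,T,\varepsilon,A)$ so large that simultaneously $N_0 \gg |k_0|$, $N_0 \ge |k_0|^2$, and $C(T,A)N_0^{-\sigma} \ll \varepsilon$; chaining the two displayed inequalities above, the conclusion follows for all $N \ge N_0$ and all $u_0 \in B_A^N(0)$.

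There is no genuine obstacle here beyond tracking these constants: the substantive work---that the sharply truncated flow $\SN$ approximates $\SK$ without the Miura transform---is exactly the content of Proposition \ref{prop:approx2} and, behind it, Lemma \ref{prop:approx1} and the frequency-decay gain $N_{max}^{1-j}$ in \eqref{eq:bilinear3}. The only mildly delicate point is the uniformity over the finite-dimensional ball $B_A^N(0)$, which holds because every estimate in Section \ref{sec:bi-,tri-} is quantitative in the norms that appear, so the dependence on $u_0$ enters only through an upper bound for $\norm{u_0}_{\HS}$.
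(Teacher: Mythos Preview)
Your proof is correct and follows essentially the same route as the paper's. The paper inserts an intermediate $\underline u_{0,N} := P_{\le N}u_0$ and splits via the triangle inequality, citing both Proposition~\ref{prop:approx} and Proposition~\ref{prop:approx2}; since $u_0 \in B_A^N(0)$ already satisfies $P_{\le N}u_0 = u_0$, that first piece is identically zero and only Proposition~\ref{prop:approx2} is actually doing work---precisely what you invoke directly, so your argument is a mild streamlining of the paper's.
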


\begin{proof}
Let $\underline u_{0,N} = P_{\leq N} u_0$. By the triangle inequality, we have
\begin{align*}
&\left|k_0\right|^{-1/2} \left|\left(\SK\left(T\right)u_0\right)\hat{}\left({k_0}\right) - \left(\SN \left(T\right) u_0\right) \hat{}\left(k_0\right)\right| \\
\le& \left|k_0\right|^{-1/2} \left|\left(\SK\left(T\right)u_0\right)\hat{}\left({k_0}\right) - \left(\SK \left(T\right) \underline u_{0,N}\right) \hat{}\left(k_0\right)\right|  \\
+&\left|k_0\right|^{-1/2}  \left|\left(\SK\left(T\right)\underline u_{0,N}\right)\hat{}\left({k_0}\right) - \left(\SN \left(T\right) \underline u_{0,N}\right) \hat{}\left(k_0\right)\right|
\end{align*}
for $\left|k_0\right| \ll N$. 

From Proposition \ref{prop:approx2} and \ref{prop:approx}, we have
\[\left|k_0\right|^{-1/2} \left|\left(\SK\left(T\right)u_0\right)\hat{}\left({k_0}\right) - \left(\SK \left(T\right) \underline u_{0,N}\right) \hat{}\left(k_0\right)\right| \lesssim N^{-\sigma}\]
and
\[\left|k_0\right|^{-1/2} \left|\left(\SK\left(T\right)\underline u_{0,N}\right)\hat{}\left({k_0}\right) - \left(\SN \left(T\right) \underline u_{0,N}\right) \hat{}\left(k_0\right)\right| \lesssim N^{-\sigma},\]
respectively, for $N> N_0\left(k_0,T,\varepsilon,A\right)$ and $\left|k_0\right| \le N^{1/2}$. Thus, we complete the proof.
\end{proof}

Finally, we prove Theorem \ref{thm:Nonsqueezing thm} by combining with Lemma \ref{lem:Nonsqueezing of trun. fow} and Proposition \ref{prop:main approx}.

\begin{proof}[Proof of Theorem \ref{thm:Nonsqueezing thm}]
Choose $0 < \varepsilon < \frac{R-r}{2}$ and the ball $B^{\infty}_{R}\left(u_*\right) \subset B^{\infty}_{A}\left(0\right)$. We also choose $N > N_0\left(T,\varepsilon,k_0,A\right)$ so large that
\begin{equation*}
\left\|u_*- P_{\leq N} u_*\right\|_{H^{-1/2}_0} \leq \varepsilon.
\end{equation*}
From Lemma \ref{lem:Nonsqueezing of trun. fow}, we can find initial data $u_0 \in P_{\leq N} H^{-\frac{1}{2}}_0\left(\MB{T}\right)$ satisfying
\begin{equation*}
\left\|u_0 - u_*\right\|_{H^{-1/2}_0} \leq R-\varepsilon
\end{equation*}
and
\begin{equation*}
\left|k_0\right|^{-\frac{1}{2}} \left|\left(S_{H}^N\left(T\right)u_0\right)^{\wedge{}}\left(k_0\right)-z\right| > r+\varepsilon.
\end{equation*}
Then by the triangle inequality, we have
\begin{equation*}
\left\|u_0 - u_*\right\|_{H^{-1/2}_0} \leq \left\|u_0 - P_{\leq N }u_*\right\|_{H^{-1/2}_0} + \left\|P_{\leq N }u_*- u_*\right\|_{H^{-1/2}_0} \leq R.
\end{equation*}
Moreover, by the triangle inequality and Proposition \ref{prop:main approx}, we have
\begin{equation*}
\begin{split}
&\left|k_0\right|^{-\frac{1}{2}} \left|z-\left(S_{H}\left(T\right)u_0\right)^{\wedge{}}\left(k_0\right)\right| \\
\geq& \left|k_0\right|^{-\frac{1}{2}} \left[ \left|z-\left(S_{H}^N\left(T\right)u_0\right)^{\wedge{}}\left(k_0\right)\right| - \left|\left(S_{H}^N\left(T\right)u_0\right)^{\wedge{}}\left(k_0\right)-\left(S_{H}\left(T\right)u_0\right)^{\wedge{}}\left(k_0\right)\right|\right] \\
 >& r+\varepsilon -\varepsilon = r,
\end{split}
\end{equation*}
and this completes the proof.
\end{proof}

\appendix

\section{}\label{sec:lambda}

In this section, we will prove some multilinear estimates under the $\mu$-periodic setting in order to use the scaling argument in the proof of the global well-posedness in Section \ref{sec:global}. We start with introducing some notations adapted to the $2\pi\mu$-periodic setting.

We put $\T_{\mu} = [0,2\pi\mu]$ and $\Z_{\mu} := \set{k/\mu : k \in \Z}$. For a function $f$ on $\T_{\mu}$, we define
\[\int_{\T_{\mu}} f(x) \: dx := \int_0^{2\pi\mu} f(x) \: dx.\]
For a function $f$ on $\Z_{\mu}$, we define normalized counting measure $dk$:
\begin{equation}\label{eq:counting measure}
\int_{\Z_{\mu}} f(k) \: dk := \frac{1}{\mu}\sum_{k\in\Z_{\mu}} f(k)
\end{equation}
and $\ell_k^2(\mu)$ norm:
\[\norm{f}_{\ell_k^2(\mu)}^2 := \int_{\Z_{\mu}}|f(k)|^2 \: dk.\]
We define the Fourier transform of $f$ with respect to the spatial variable by
\[\wh{f}(k) := \frac{1}{\sqrt{2\pi}}\int_0^{2\pi\mu} e^{-ixk}f(x)\: dx, \hspace{3em} k \in \Z_{\mu},\]
and we have the Fourier inversion formula
\[f(x) := \frac{1}{\sqrt{2\pi}}\int_{\Z_{\mu}} e^{ixk}\wh{f}(k)\: dk, \hspace{3em} x \in \T_{\mu}.\]
Of course, we can naturally define the space-time Fourier transform similarly.

Then the usual properties of the Fourier transform hold:
\begin{equation}\label{eq:Plancherel}
\norm{f}_{L_x^2(\T_{\mu})} = \norm{\wh{f}}_{\ell_k^2(\mu)},
\end{equation}
\[\int_0^{2\pi\mu}f(x)\overline{g}(x) \: dx = \int_{\Z_{\mu}}\wh{f}(k)\overline{\wh{g}}(k) \: dk,\]
\[\wh{fg}(k) = (\wh{f} \ast \wh{g})(k) = \int_{\Z_{\mu}}\wh{f}(k-k_1)\wh{g}(k_1) \: dk_1\]
and for $m \in \Z_+$,
\begin{equation}\label{eq:derivatives}
\px^mf(x) = \int_{\Z_{\mu}}e^{ixk}(ik)^{m}\wh{f}(k) \: dk.
\end{equation}
Together with \eqref{eq:Plancherel} and \eqref{eq:derivatives}, we can define the Sobolev space $H^s(\T_{\mu})$ with the norm
\begin{equation}\label{eq:Hs norm}
\norm{f}_{H^s(\T_{\mu})} = \norm{\bra{k}^s\wh{f}(k)}_{\ell_k^2(\mu)}.
\end{equation}

We denote $X_{\mu}^{s,b}$, $Y_{\mu}^s$ and $Z_{\mu}^s$ by the modified spaces of $X^{s,b}$, $Y^s$ and $Z^s$ adapted to $\mu$-periodic setting, respectively.

Under those settings, we consider the scaling property. Let
\[u_{\mu}(t,x) = \mu^{-2j}u(\mu^{-2j-1}t,\mu^{-1}x), \]
what $u$ satisfies \eqref{eq:kdv} on $[0,T]$ with initial data $u_0 \in H^s(\T)$ is equivalent to what $u_{\mu}$ satisfies the same equation on $[0,\mu^{2j+1}T]$ with initial data $u_{0,\mu} \in H^{s}(\T_{\mu})$. By using \eqref{eq:counting measure} and \eqref{eq:Hs norm}, we obtain
\begin{equation}\label{eq:small}
\norm{u_{0,\mu}}_{H^s(\T_{\mu})} = (1+\mu^{-s})\mu^{-2j+1/2}\norm{u_0}_{H^s}(\T).
\end{equation}
In fact, since we may assume the mean-zero property, we can replace \eqref{eq:small} by
\begin{equation*}%\label{eq:small}
\norm{u_{0,\mu}}_{H_0^s(\T_{\mu})} = \mu^{-2j-s+1/2}\norm{u_0}_{H_0^s}(\T).
\end{equation*}

We first restate several lemmas in Section \ref{sec:bi-,tri-} by modifying those adapted to $\mu$-periodic setting.

\begin{lemma}
For any function $u \in \T_{\mu} \times \R$, we have the $L^4$-Strichartz estimate for \eqref{eq:kdv}.
\[\norm{u}_{L_{t,x}^4(\T_{{\mu}})} \lesssim \norm{u}_{X_{\mu}^{0,\frac{j+1}{2(2j+1)}}}.\]
In particular, we have $\norm{u}_{L_{t,x}^4(\T_{{\mu}})} \lesssim \norm{u}_{X_{\mu}^{0,\frac13}}$.
\end{lemma}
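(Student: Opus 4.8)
The plan is to prove the $\mu$-periodic $L^4$-Strichartz estimate by reducing it, via the scaling relation $u_\mu(t,x) = \mu^{-2j}u(\mu^{-2j-1}t,\mu^{-1}x)$, to the already-known periodic estimate \eqref{eq:strichartz} on $\T$, and then checking that the relevant norms scale compatibly so that no $\mu$-dependent constant appears. First I would recall that the Strichartz estimate \eqref{eq:strichartz} is genuinely a statement about the linear propagator $e^{-t(-1)^{j+1}\px^{2j+1}}$: writing $u(t) = e^{-t(-1)^{j+1}\px^{2j+1}}f$ one has $\norm{u}_{L^4_{t,x}(\T \times \R)} \lesssim \norm{f}_{H^0(\T)}$ on a unit time interval, and by the standard $X^{s,b}$ transference principle this upgrades to $\norm{u}_{L^4_{t,x}} \lesssim \norm{u}_{X^{0,(j+1)/(2(2j+1))}}$ for general $u$; the same transference works verbatim on $\T_\mu$, so it suffices to establish the free-evolution estimate on $\T_\mu$ with a constant independent of $\mu$.

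The core step is the Fourier-analytic proof of the free-evolution bound on $\T_\mu$, following Bourgain \cite{Bourgain1993}. One expands $u(t,x) = \sum_{k \in \Z_\mu} a_k e^{i(kx + k^{2j+1}t)}$ (with the normalized measure \eqref{eq:counting measure}), computes $\norm{u}_{L^4_{t,x}(\T_\mu)}^4 = \norm{u^2}_{L^2_{t,x}(\T_\mu)}^2$ by Plancherel \eqref{eq:Plancherel}, and is led to count, for each pair $(k_*, m_*)$, the number of representations $k_* = k_1 + k_2$, $m_* = k_1^{2j+1} + k_2^{2j+1}$ with $k_1,k_2 \in \Z_\mu$. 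Here I would invoke Lemma \ref{lem:algebra}(a): on the hyperplane $k_1 + k_2 - k_* = 0$ one has $k_1^{2j+1} + k_2^{2j+1} - k_*^{2j+1} = -k_1 k_2 k_* \, Q_3$ with $|Q_3| \sim \max(|k_1|,|k_2|,|k_*|)^{2j-2}$, so fixing $k_*$ and $m_*$ pins down the product $k_1 k_2 \cdot (\text{a polynomial of controlled size})$, hence $k_1$ up to $O_\vep((|k_*|\mu)^\vep)$ choices — and crucially the divisor-counting bound is uniform in $\mu$ because $\Z_\mu$ is just a dilate of $\Z$. Feeding this counting bound into Cauchy–Schwarz, exactly as in \cite{Bourgain1993}, gives $\norm{u}_{L^4_{t,x}(\T_\mu)} \lesssim \norm{u}_{X_\mu^{0,(j+1)/(2(2j+1))}}$ with an implicit constant depending only on $j$ and the arbitrarily small loss exponent; absorbing the $\vep$-loss into $b$ (using $(j+1)/(2(2j+1)) \le 1/3$ with room to spare) yields the stated clean estimate and the particular case $\norm{u}_{L^4_{t,x}(\T_\mu)} \lesssim \norm{u}_{X_\mu^{0,1/3}}$.

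The main obstacle is bookkeeping the normalizations so that the $\mu$-factors genuinely cancel: the counting measure \eqref{eq:counting measure} carries a $1/\mu$, the Sobolev and $X^{s,b}$ norms inherit it through \eqref{eq:Hs norm}, and one must verify that the number-theoretic estimate (a divisor bound for integers of size $|k_*|\mu$) does not secretly depend on $\mu$ beyond the harmless $(\,\cdot\,)^\vep$ factor. Once one observes that rescaling $\Z_\mu \to \Z$ converts the representation-counting problem on $\T_\mu$ into the identical problem on $\T$ that Bourgain solved, the $\mu$-independence is immediate, and the rest is the routine transference argument; accordingly I would simply mirror the structure of \cite{Bourgain1993} and remark that all constants are $\mu$-uniform, as the paper's scaling application requires.
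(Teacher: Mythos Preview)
Your proposal is correct and aligns with the paper's own treatment: the paper simply defers to Section~7 of \cite{CKSTT1}, which carries out exactly the Bourgain-style counting argument on the rescaled lattice $\Z_\mu$ that you describe, verifying $\mu$-uniformity of the divisor bound. One small caveat: the ``standard transference principle'' you invoke in the first paragraph only yields $b>1/2$, so the sharp exponent $b=(j+1)/(2(2j+1))<1/2$ must come from the direct Fourier computation you give in the second paragraph rather than from transference---but since your core argument is that direct computation, this is only a presentational slip and not a gap.
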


\begin{proof}
The proof is similar as in \cite{CKSTT1} associated to the KdV equation. See Section 7 in \cite{CKSTT1} for the details.
\end{proof}

\begin{lemma}[Hirayama \cite{Hirayama}]
Let $j \in \N$ and $\mu \ge 1$. For $s \ge -j/2$, there exists $0 < \epsilon < 2j +s -1/2$ such that the following bilinear estimate holds:
\begin{equation*}%\label{eq:bilinear1}
\norm{\ft^{-1}[\bra{\tau - k^{2j+1}}^{-1}\wt{\px uv}]}_{Z_{\mu}^s} \lesssim \mu^{\epsilon}\norm{u}_{Y_{\mu}^s}\norm{v}_{Y_{\mu}^s},
\end{equation*}
where the implicit constant dose not depend on $\mu$.
\end{lemma}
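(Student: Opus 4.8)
The statement is the $\mu$-periodic companion of the bilinear estimate \eqref{eq:bilinear1}, and the plan is to rerun the proof of \eqref{eq:bilinear1} --- equivalently, the argument behind Lemma \ref{lem:bilinear2} together with the treatment of the $\ell_k^2L_\tau^1$ parts of $Y^s$ and $Z^s$ --- verbatim on the lattice $\Z_{\mu}$, keeping careful track of the powers of $\mu$ introduced by the normalized counting measure \eqref{eq:counting measure}. First I would make a Littlewood--Paley decomposition $u=P_{N_1}u$, $v=P_{N_2}v$ with output frequency $|k_3|\sim N_3$, and reduce the claim by duality and Plancherel \eqref{eq:Plancherel} to a weighted trilinear sum over $k_i\in\Z_{\mu}^{\ast}$, $\tau_i\in\R$ (the $k_i$-integrations taken against the measure \eqref{eq:counting measure}), writing $\lambda_i:=\tau_i-k_i^{2j+1}$. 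The resonance identity $\lambda_1+\lambda_2=\lambda_3-P_3(k_1,k_2,-k_3)$ and Lemma \ref{lem:algebra}(a) give $\max_i|\lambda_i|\gtrsim|k_1k_2k_3|\max_i|k_i|^{2j-2}$ with no $\mu$-dependence, since these are purely algebraic facts about real numbers; this is what lets the frequency weights $|k_3|^{1+s}|k_1|^{-s}|k_2|^{-s}$ (recall $-s\le j/2$) be absorbed, exactly as on $\T$. Throughout I would use Lemma \ref{lem:algebra} rather than any factorization of the resonance function, in the spirit of Section \ref{sec:bi-,tri-}.

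Next I would run the same case analysis keyed to which of $\bra{\lambda_1},\bra{\lambda_2},\bra{\lambda_3}$ is largest. When $\bra{\lambda_1}$ (or, symmetrically, $\bra{\lambda_2}$) is largest, split off $\bra{\lambda_3}^{-2/3}\in L_{\tau_3}^2$, use Cauchy--Schwarz in $\tau_3$, and estimate the resulting $L_{t,x}^2$ pairing by Hölder and the $L^4$-Strichartz embedding $X_{\mu}^{0,1/3}\subset L_{t,x}^4(\T_{\mu})$ (Lemma \ref{lem:strichartz} in its $\mu$-adapted form stated above); since the Strichartz constant is independent of $\mu$, these cases cost no power of $\mu$. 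The $\ell_k^2L_\tau^1$ component of $Z_{\mu}^s$ is produced the same way after using that $\bra{\lambda_3}^{-1/2-}\in L_{\tau_3}^2$ to trade $L_{\tau_3}^1$ for $L_{\tau_3}^2$ at the cost of an arbitrarily small power of $\bra{\lambda_3}$, and the $\ell_k^2L_\tau^1$ parts of $Y_{\mu}^s$ on the inputs are absorbed into $X_{\mu}^{0,1/2}$ by the same trade; none of these manipulations sees $\mu$.

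The only place $\mu$ genuinely enters is the fully non-resonant case $\bra{\lambda_3}=\max_i\bra{\lambda_i}$ with $\bra{\lambda_1},\bra{\lambda_2}\ll|P_3(k_1,k_2,-k_3)|^{1/100}$, so that $\bra{\lambda_3}\sim|P_3|$. Here, after Cauchy--Schwarz in $\tau_1$ and $\tau_2$ separately and the bound $|P_3|\gtrsim|k_i|^{2j}$ of Lemma \ref{lem:algebra}(a), the estimate reduces to
\[
\normo{\int_{\Z_{\mu}}|P_3(k_1,k_3-k_1,-k_3)|^{-\frac12+\frac{1}{100}}F_1(k_1)F_2(k_3-k_1)\,dk_1}_{\ell_{k_3}^2(\mu)}\lesssim\mu^{\epsilon}\norm{F_1}_{\ell_{k_1}^2(\mu)}\norm{F_2}_{\ell_{k_2}^2(\mu)},
\]
where $F_i(k_i)=\norm{\wt{u}_i(k_i,\cdot)}_{L_{\tau_i}^2}$. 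The weight obeys $|P_3|^{-\frac12+\frac1{100}}\lesssim|k_3|^{-j+\frac{j}{50}}$, which is square-integrable against the measure on $\Z_{\mu}$ uniformly in $\mu\ge1$, so it can be pulled out, and a Cauchy--Schwarz in $k_1$ then closes the remaining convolution; carrying out these two integrations over $\Z_{\mu}$, whose points are spaced $\mu^{-1}$ apart, is exactly where the factor $\mu^{\epsilon}$ appears, and $\epsilon$ can be made as small as we wish by spending a sliver of the room in the exponents $-\frac12+\frac1{100}$ and $-j+\frac{j}{50}$. The $X_{\mu}^{s,-1/2}$ part of $Z_{\mu}^s$ is then handled exactly as in Lemma \ref{lem:bilinear2} with general $s\ge-j/2$ in place of $-\frac12$, again with only a $\mu^{0+}$ loss.

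I expect this last, $\mu$-counting step to be the only real obstacle: one has to verify that the loss is genuinely $\mu^{0+}$ rather than, say, $\mu^{1}$, and that $\epsilon$ can be chosen in the range $0<\epsilon<2j+s-\tfrac12$ required by the statement --- which is automatic once $\epsilon$ is small, since $2j+s-\tfrac12\ge\tfrac{3j}{2}-\tfrac12>0$ for all $j\in\N$ and $s\ge-j/2$. Everywhere else the implicit constants come only from Strichartz, Cauchy--Schwarz, and fixed geometric series, hence are independent of $\mu$, so summing the dyadic pieces against those geometric series finishes the proof.
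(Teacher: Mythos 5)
Your overall strategy (rerun the unit-period argument of Section \ref{sec:bi-,tri-} on $\Z_{\mu}$ and track powers of $\mu$) is reasonable, and indeed the paper itself offers no proof here --- it simply defers to Hirayama --- so the entire content of this lemma is precisely the $\mu$-bookkeeping. That bookkeeping is where your sketch has a genuine gap. You claim that every step except the final convolution estimate is $\mu$-uniform and that the remaining loss is $\mu^{0+}$, ``as small as we wish.'' Neither claim is justified, because on $\Z_{\mu}$ the nonzero frequencies go down to $1/\mu$, and the unit-scale facts used silently throughout the $\T$-argument fail there. Concretely: (i) the bound $|P_3|\gtrsim |k_i|^{2j}$, which you invoke to get $|P_3|^{-\frac12+\frac1{100}}\lesssim |k_3|^{-j+\frac{j}{50}}$, is false on $\Z_{\mu}$ --- if the least frequency is $\sim 1/\mu$ and $|k_2|\sim|k_3|\sim N$ then $|P_3|\sim \mu^{-1}N^{2j}$, so this step alone costs a factor $\mu^{\frac12-}$; (ii) the absorption of the weight $\bra{k_3}^{s}|k_3|\bra{k_1}^{-s}\bra{k_2}^{-s}$ into $|P_3|^{1/2}$ likewise degrades by $\mu^{1/2}$-type factors in low--high--high interactions with least frequency $\sim1/\mu$; (iii) there is a region, empty for mean-zero data on $\Z$ but present on $\Z_{\mu}$, where $|P_3|\lesssim 1$ (e.g.\ all $|k_i|\sim 1/\mu$, where $|P_3|\sim\mu^{-(2j+1)}$), so the relation $\max_i\bra{\lambda_i}\gtrsim|P_3|$ yields no smoothing at all and that region needs its own argument. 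Moreover you have mislocated the source of the loss: with the normalized measure \eqref{eq:counting measure}, Cauchy--Schwarz in $k_1$ against $\ell^2_{k}(\mu)$ norms is exactly $\mu$-free; the genuine $\mu$-dependence comes from the degeneration of the resonance function \eqref{eq:algebra1} at frequencies of size $1/\mu$, not from the lattice spacing in the convolution integrals.

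Because of this, your assertion that $\epsilon$ can be taken arbitrarily small is unsupported, and in fact the statement is phrased the way it is --- ``there exists $0<\epsilon<2j+s-\frac12$'' rather than ``for every $\epsilon>0$'' --- precisely because the proof only needs, and only delivers, some definite power of $\mu$ strictly below the scaling gain $\mu^{-(2j+s-\frac12)}$ used in Section \ref{sec:global}. To repair the argument you must redo the case analysis of Lemma \ref{lem:bilinear2} and the $\ell^2_kL^1_\tau$ estimates with the low-frequency ranges $1/\mu\le|k_i|<1$ treated explicitly (using that $\bra{k_i}\sim1$ there, that the factor $|k_3|\le1$ helps when the output is low, and the $\mu$-uniform Strichartz bound of Lemma \ref{lem:strichartz}), and then verify that the accumulated powers of $\mu$ --- which are not $0+$ --- sum to an exponent $\epsilon$ below $2j+s-\frac12$. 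Without that low-frequency case analysis and explicit accounting, the proposal does not establish the stated estimate.
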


\begin{proof}
See \cite{Hirayama} for the proof.
\end{proof}
\begin{lemma}\label{lem:lambda-bilinear}
Let $j \in \N$ and $s \ge -j/2$. Let $u_i = P_{N_i}u$ and $|k_i|\sim N_i\ge1/\mu$, $i=1,2,3$. Then we have 
\[\norm{P_{N_3}\px(u_1u_2)}_{X_{\mu}^{s,-\frac12}} \lesssim (N_1N_2)^{-\frac12}N_3^{s+\frac12}N_{max}^{1-j}\norm{u_1}_{X_{\mu}^{0,\frac12}}\norm{u_2}_{X_{\mu}^{0,\frac12}}.\]
\end{lemma}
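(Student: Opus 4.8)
The plan is to obtain Lemma \ref{lem:lambda-bilinear} by repeating the proof of Lemma \ref{lem:bilinear2} verbatim, but replacing every sum over $\Z^{\ast}$ with the normalized counting-measure integral over $\Z_{\mu}^{\ast}$ as in \eqref{eq:counting measure}, and checking at each step that the implicit constants do not pick up any factor of $\mu$. First I would reduce \eqref{eq:bilinear2} (in its $\mu$-periodic form) to the analogue of \eqref{eq:equiv}: by Plancherel \eqref{eq:Plancherel} and the definition of $\norm{\cdot}_{X_{\mu}^{s,b}}$, the claim is equivalent to the boundedness of the trilinear form with kernel
\[
\frac{|k_3|\bra{k_3}^s|k_3|^{-s-\frac12}|k_1k_2|^{\frac12}N_{max}^{j-1}}{\prod_{i=1}^{3}\bra{\lambda_i}^{\frac12}},
\qquad \lambda_i = \tau_i - k_i^{2j+1},
\]
from $L_{t,x}^2(\T_{\mu})\times L_{t,x}^2(\T_{\mu})$ to $\ell_{k_3}^2(\mu)L_{\tau_3}^2$, where $k_1,k_2\in\Z_{\mu}^{\ast}$, $k_1+k_2=k_3$, and the frequency constraint is now $|k_i|\sim N_i\ge 1/\mu$. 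The total-derivative structure still lets us assume $k_3\neq 0$.

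Next I would run the same dispersive gain: ordering $|\lambda_1|\le|\lambda_2|\le|\lambda_3|$ and using the identity $\lambda_1+\lambda_2 = \lambda_3 - P_3(k_1,k_2,-k_3)$ together with Lemma \ref{lem:algebra}(a) — which is a purely algebraic statement valid for all real $x,y,z$ with $x+y+z=0$ and hence insensitive to whether the frequencies live in $\Z$ or $\Z_{\mu}$ — gives $|\lambda_3|\gtrsim |k_1k_2k_3|\max(|k_1|,|k_2|,|k_3|)^{2j-2}$. This absorbs the full weight $|k_1k_2|^{\frac12}|k_3|^{\frac12}N_{max}^{j-1}$ in the numerator (here one uses $|k_1|\sim N_1$, $|k_2|\sim N_2$, $|k_3|\sim N_3$ and $N_{max}=\max(N_1,N_2,N_3)$, so that $N_{max}^{2j-2}\ge N_{max}^{j-1}N_{max}^{j-1}\gtrsim N_{max}^{j-1}$), leaving a kernel bounded by $\bra{\lambda_1}^{-\frac12}\bra{\lambda_2}^{-\frac12}$ times $\bra{\lambda_3}^{-\frac12}$ with a little to spare. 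Then, exactly as before, I would use duality and the $\mu$-periodic $L^4$-Strichartz estimate $X_{\mu}^{0,\frac13}\subset L_{t,x}^4(\T_{\mu})$ (the first lemma of this appendix), whose implicit constant is $\mu$-independent, to bound the resulting integral by $\norm{u_3}_{L_{t,x}^2}\norm{u_1}_{X_{\mu}^{0,-\frac16}}\norm{u_2}_{X_{\mu}^{0,-\frac16}}\lesssim \norm{u_1}_{L_{t,x}^2}\norm{u_2}_{L_{t,x}^2}$.

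The only genuinely $\mu$-sensitive points, and the part I would be most careful about, are bookkeeping rather than analysis: one must check that passing from $\frac1\mu\sum$ to $\int_{\Z_\mu}$ does not introduce powers of $\mu$ anywhere, in particular that the Strichartz constant, the constant in Lemma \ref{lem:algebra}(a), and the duality/Cauchy--Schwarz steps are all scale-invariant; and one must verify that the hypothesis $|k_i|\ge 1/\mu$ is exactly what is needed so that $\bra{k_i}\sim |k_i|$ on the relevant frequency range (so that the $\bra{k_3}^s$ weight can be freely traded for $|k_3|^s$, as is done implicitly when writing the kernel above). Since the $L^2$-conservation law reduction means we only care about $-j/2\le s<0$, the weight manipulations are harmless. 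Collecting these observations yields the stated estimate with a $\mu$-independent implicit constant, which is precisely what the scaling argument in Section \ref{sec:global} requires.
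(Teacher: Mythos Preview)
Your proposal is correct and matches the paper's own proof, which simply asserts that the argument of Lemma~\ref{lem:bilinear2} carries over verbatim because no lower bound on the magnitude of the resonant function $P_3$ is ever invoked. One small correction to your bookkeeping: the hypothesis $|k_i|\ge 1/\mu$ does \emph{not} force $\bra{k_i}\sim|k_i|$ when $|k_i|<1$, but this is harmless since in the only range that matters, $-j/2\le s<0$, one has $\bra{k_3}^s\le |k_3|^s\sim N_3^s$ unconditionally --- which is precisely the inequality the weight comparison needs, as you yourself note in your closing remark.
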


\begin{proof}
The proof is exact same as the proof of Lemma \ref{lem:bilinear2}, since we do not use the lower bound of the magnitude of the resonant function. 
\end{proof}
\begin{lemma}
Let $j \in \N$ and $-j/2 \le s < 0$. Let $u_i = P_{N_i}u$ and $|k_i|\sim N_i \ge 1/\mu$, $i=1,2,3$. Suppose that $k=k_1+k_2+k_3$, $|k_1|\ge|k_2|\ge|k_3|$ and $P_4(k_1,k_2,k_3,-k) \neq 0$, where $P_4$ is defined as in Lemma \ref{lem:algebra}. 

{\rm{(a)}} If $|k|\sim|k_1|$, then
\[\norm{u_1u_2u_3}_{X_{\mu}^{-s,\frac12}} \lesssim N_1^{-s+j}N_2^{\frac12}N_3^{\frac12}\norm{u_1}_{Y_{\mu}^0}\norm{u_2}_{Y_{\mu}^0}\norm{u_3}_{Y_{\mu}^0}\]

{\rm{(b)}} If $|k_1|\sim|k_2|$ and $j \ge 2$, then
\[\norm{u_1u_2u_3}_{X_{\mu}^{-s-j,\frac12}} \lesssim N_1^jN_3\norm{u_1}_{Y_{\mu}^0}\norm{u_2}_{Y_{\mu}^0}\norm{u_3}_{Y_{\mu}^0}, \hspace{2em} \mbox{for} \hspace{1em} |k_3| \ge |k|,\]
or
\[\norm{u_1u_2u_3}_{X_{\mu}^{-s-j-\frac12,\frac12}} \lesssim N_1^jN_3^{\frac12}\norm{u_1}_{Y_{\mu}^0}\norm{u_2}_{Y_{\mu}^0}\norm{u_3}_{Y_{\mu}^0}, \hspace{2em} \mbox{for}  \hspace{1em} |k| \ge |k_3|.\]
\end{lemma}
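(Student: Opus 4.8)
The plan is to repeat the proof of Lemma \ref{lem:trilinear} verbatim, after checking that every analytic tool used there is available on $\T_{\mu}$ with a $\mu$-independent constant. Recall the structure of that proof: one splits into the region where $\bra{\tau-k^{2j+1}} \lesssim \bra{\tau_i-k_i^{2j+1}}$ for some $i$ and the region where $\bra{\tau-k^{2j+1}} \gg \bra{\tau_i-k_i^{2j+1}}$ for all $i$. In the first region one discards the high modulation weight and estimates the remaining convolution by Young's and the Cauchy--Schwarz inequalities, paying a factor $N_3^{1/2}$ to pass from an $\ell_k^1L_{\tau}^1$ bound to the $Y^0$ norm. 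In the second region one uses the resonance identity together with Lemma \ref{lem:algebra}(b) to bound $\bra{\tau-k^{2j+1}}$ from above by $|k_1|^{2j}|k_2|$ (when $|k|\sim|k_1|$) or $|k_1|^{2j}\max(|k_3|,|k|)$ (when $|k_1|\sim|k_2|$), and then distributes the product over $L_{t,x}^2 \cdot L_{t,x}^4 \cdot L_{t,x}^4$ and invokes the $L^4$-Strichartz estimate.

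First I would record that the ingredients transfer. The Plancherel identity is \eqref{eq:Plancherel}; the $L^4$-Strichartz bound $\norm{u}_{L_{t,x}^4(\T_{\mu})} \lesssim \norm{u}_{X_{\mu}^{0,1/3}}$ with $\mu$-independent constant is the $\mu$-periodic Strichartz lemma stated above; Young's inequality and the Cauchy--Schwarz inequality hold with constant $1$ for the normalized counting measure $dk$ on $\Z_{\mu}$ defined in \eqref{eq:counting measure}; and the elementary estimate $\norm{\wh{f}}_{\ell_k^1(\mu)} \lesssim N^{1/2}\norm{\wh{f}}_{\ell_k^2(\mu)}$ for $\wh{f}$ supported in $|k|\sim N$ holds with a $\mu$-independent constant, because $\set{k\in\Z_{\mu} : |k|\sim N}$ has $dk$-measure $\sim N$. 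Finally, Lemma \ref{lem:algebra}(b) is stated for arbitrary reals, hence applies to $k_1,k_2,k_3\in\Z_{\mu}$, and the resonance identity $\tau-k^{2j+1} = P_4(k_1,k_2,k_3,-k) + \sum_{i=1}^3(\tau_i-k_i^{2j+1})$ is a pure algebraic identity on $\R$. With these in hand I would run the three sub-estimates exactly as in the proof of Lemma \ref{lem:trilinear}, replacing $X^{s,b}$, $Y^s$, $\ell_k^pL_{\tau}^q$, $L_{t,x}^p$ and the hypothesis $N_i\ge1$ by $X_{\mu}^{s,b}$, $Y_{\mu}^s$, $\ell_k^p(\mu)L_{\tau}^q$, $L_{t,x}^p(\T_{\mu})$ and $N_i\ge1/\mu$; no constant picks up a power of $\mu$ at any step, which gives the stated inequalities with $\mu$-independent implicit constants.

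The only point requiring genuine care — and the reason the trilinear estimate, unlike Hirayama's bilinear estimate \eqref{eq:bilinear1}, is $\mu$-uniform with no correction factor $\mu^{\epsilon}$ — is that the argument never uses a \emph{lower} bound on the resonant function in terms of small frequencies; such a bound would degrade on $\Z_{\mu}$ since the minimal nonzero frequency is $1/\mu$. In the high-modulation region the resonance weight is simply thrown away, and in the non-resonant region only the \emph{upper} bound $|P_4|\sim|(k-k_1)(k-k_2)(k-k_3)||k_1|^{2j-2}$ is used, which is insensitive to the lattice scale. Thus the "hard part" is merely the bookkeeping of the normalization constants in Young's and the Cauchy--Schwarz inequalities on $\Z_{\mu}$, which is routine; once that is settled the remainder of the proof is a direct transcription of the proof of Lemma \ref{lem:trilinear}.
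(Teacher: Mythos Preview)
Your proposal is correct and matches the paper's own proof, which simply says the argument is ``similar to the proof of Lemma \ref{lem:trilinear}, due to the same reason in the proof of Lemma \ref{lem:lambda-bilinear}'' --- namely that no \emph{lower} bound on the resonant function is ever invoked. Your explicit identification of this point, together with the verification that the $\ell^1$--$\ell^2$ passage and the $L^4$-Strichartz bound carry $\mu$-independent constants, is exactly the content the paper leaves implicit; one minor inaccuracy is that in part (b) the paper does not use Strichartz in the high-modulation region but rather the $\ell_k^2(\mu)$-summability of $\bra{k}^{-s-j}$ followed by Young's inequality, though this too is $\mu$-uniform for the normalized counting measure and so does not affect your conclusion.
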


\begin{proof}
The proof is also similar to the proof of Lemma \ref{lem:trilinear}, due to the same reason in the proof of Lemma \ref{lem:lambda-bilinear}. 
\end{proof}
We remark that the restriction of low frequency does not affect the proof of global well-posedness.

%\bibliography{Nonsqueezing_HKdV}

\end{document}